\newtheorem{theorem}{Theorem}[section]
\newtheorem{lemma}[theorem]{Lemma}
\newtheorem{definition}[theorem]{Definition}
\newtheorem{remark}[theorem]{Remark}
\newtheorem{example}[theorem]{Example}
\newcommand{\N}{\mathbb{N}}
\newcommand{\R}{\mathbb{R}}
\newcommand{\btimes}{\mathlarger{\mathlarger{\mathlarger{\times}}}}
\renewenvironment{proof}[1][.]{%
\bigskip\noindent{\bf Proof#1 }}{%
\hfill$\blacksquare$\bigskip}
\newcommand{\mZ}{\mathcal Z}
\newcommand{\mS}{\mathcal S}
\newcommand{\F}{\mathcal F}
\newcommand{\K}{\mathcal K}
\newcommand{\ve}{\varepsilon}
\newcommand{\on}{\operatorname}
\begin{document}
\pagestyle{myheadings}
\title{A multiresolution algorithm to generate images of generalized fuzzy fractal attractors}

\author[0]{Rudnei D. da Cunha
\thanks{
Instituto de Matem\'atica e Estat\'istica - UFRGS\\
Av. Bento Gon\c calves, 9500\\
Porto Alegre - 91500 - RS - Brazil.\\
Email: rudnei.cunha@ufrgs.br}
}
\affil[0]{Universidade Federal do Rio Grande do Sul\\
}
\author[1]{Elismar R. Oliveira
\thanks{
Instituto de Matem\'atica e Estat\'istica - UFRGS\\
Av. Bento Gon\c calves, 9500\\
Porto Alegre - 91500 - RS - Brazil.\\
Email: elismar.oliveira@ufrgs.br}
}
\affil[1]{Universidade Federal do Rio Grande do Sul\\
}
\author[2]{Filip Strobin
\thanks{
Instititute of Mathematics, Lodz University of Technology, W\'olcza\'nska 215, 90-924 {\L}\'od\'z, Poland.}
}
\affil[2]{Lodz University of Technology\\
}
\date{\today}
\maketitle
%\boldmath{ %all formulas in bold

\noindent\rule{\textwidth}{0.4mm}
\begin{abstract}
 We provide a new algorithm to generate images of  the generalized fuzzy fractal attractors described in \cite{Oliveira-2017}. We also provide some important results on the approximation of fractal operators to discrete subspaces with application to discrete versions of the deterministic algorithm for fractal image generation in the cases of IFS recovering the classical images from Barnsley et al., Fuzzy IFS from \cite{Cabrelli-1992} and GIFS's from \cite{Jaros-2016}.
\end{abstract}
%\vspace {.8cm}
%\noindent\rule{\textwidth}{0.4mm}

%\tableofcontents

%\vspace {.8cm}

\section{Introduction}

Fractals theory is based on the Hutchinson-Barnsley theorem from early 80's (\cite{BOOK:9592}, \cite{HUT}). Initiated by Miculescu and Mihail \cite{MIHAIL-2007} 2008 (see also \cite{mihail2008recurrent}, \cite{mihail2010generalized} and  \cite{strobin_swaczyna_2013}) considered selfmaps of a $X$, they considered mappings defined on finite Cartesian product of a metric space $X$ (they called systems of such mappings as \emph{generalized IFSs}, GIFSs for short). It turns out that such systems of mappings generate sets which can be considered as fractal sets, and many parts of classical theory have natural counterparts in such a framework. What is also important, the class of GIFS's fractals is essentially wider than the class of classical IFSs' fractals (see Strobin \cite{Strobin-2015}).

Cabrelli, Forte, Molter and Vrscay in 1992,  \cite{Cabrelli-1992}, considered a {fuzzy} version of the theory of iterated function systems (IFS's in short) and their fractals, which now is quite rich and important part of the fractals theory. Inspired by the work of Miculescu and Mihail \cite{MIHAIL-2007} we considered, in 2017, the fuzzy version of GIFs's in \cite{Oliveira-2017} obtaining the analogous of the Barnsley-Hutchinson theorem for this setting claiming that there exist a generalized fuzzy fractal attractor. The remaining question was to provide some procedure to approximate this fuzzy sets by a finite iterative process, that is, an algorithm to draw these images.

Through the years several papers has been published providing algorithms generating attractors  for IFS, see \cite{Amo-2003}, \cite{Chitcescu-2010},\cite{Dubuc-1990}, \cite{Yang_1994} etc, for fuzzy IFS, see \cite{Cabrelli-1992}, etc, and lately for GIFS see \cite{Jaros-2016}, \cite{Miculescu_2019}, etc.

The first problem we had to solve was to find an appropriate discretization of a GIFS an of fuzzy sets. The main difficult on such algorithms for fuzzy GIFS is that the Zadeh's extension principle require the solution of a max-min problem with constraints at each point of the discretization, just to do a single iteration of the fractal operator, which is a monstrous task computationally speaking.

The paper is organized as follow:\\
On the Section~\ref{sec:Bas Hut--Barn theory} we recall some basics on the classical Hutchinson-Barnsley theory, on the Section~\ref{sec:Fuzzyfication of the Hutchinson--Barnsley} we explain the  fuzzy counterpart of the Hutchinson-Barnsley theory (fuzzyfication of IFS) and finally in Section~\ref{sec:discret fixed point thm} we introduce a fairly wide applicable theory to discretization of fixed point theorems for Banach contraction maps on a metric space with respect to a subset. This results will be of fundamental importance in the following sections.  In Section~\ref{sec:Discretization of sets and fuzzy sets} we describe the discretization process for a fuzzy set, which is of main importance to our work.

As a consequence of this preparatory theory we were able to produce discrete version of the deterministic algorithm to generate the attractor of IFS, Section~\ref{sec:IFSDraw} using Theorem~\ref{tt2}, and fuzzy IFS, Section~\ref{sec:FuzzyIFSDraw} using Theorem~\ref{tt3}.  Computational examples are given in Section~\ref  {Examples of discrete IFS fractals} and Section~\ref{Fuzzy IFS examples}.

The final part is to consider the GIFS and the fuzzy GIFS in Section~\ref{sec:Generalized IFSs and their fuzzyfication} and its discretization in the Section~\ref{sec:GIFS and GIFZS discretization}.  Then, in Section~\ref{sec:FuzzyIFSandGIFSDraw} we present the discrete version of the deterministic algorithm to both cases, exhibiting examples of known GIFS fractals being recovered by our algorithm and for the last new fuzzy GIFS attractors as we proposed. Computational examples are given in Section~\ref{sec:FuzzyIFSandGIFSDraw}.

\section{Basics of the Hutchinson--Barnsley theory}\label{sec:Bas Hut--Barn theory}

Let $(X,d)$ be a metric space.
We say that $f:X\to X$ is a \emph{Banach contraction}, if the Lipschitz constant $\on{Lip}(f)<1$.
The classical Banach fixed point theorem states that each Banach contraction on a complete metric space has a unique fixed point $x_*$, and for every $x_0\in X$, the sequence of iterates $(f^k(x_0))_{k=0}^{\infty}$  converges to $x_*$.
\begin{definition}\emph{
An }iterated function system\emph{ (IFS in short)  $\mS=(X,(\phi_j)_{j=1}^{L})$ consists of a finite family  $\phi_1,...,\phi_L$ of continuous selfmaps of $X$. Each IFS $\mS$ generates the map $F_\mS:\K^*(X)\to\K^*(X)$ (where $\K^{*}(X)$ denotes the family of all nonempty and compact subsets of $X$), called }the Hutchinson operator\emph{, defined by
$$
\forall_{K\in\K^*(X)}\;F_\mS(K):=\bigcup_{j=1}^L\phi_j(K).
$$
By the \emph{attractor} of an IFS $\mS$ we mean the unique set $A_\mS\in \K^*(X)$ which satisfies
$$
A_\mS=F_\mS(A_\mS)=\bigcup_{j=1}^L\phi_j(A_\mS)
$$
and such that for every $K\in\K^*(X)$, the sequence of iterates $(F_\mS^k(K))_{k=0}^{\infty}$ converges to $A_\mS$ with respect to the Hausdorff metric $h$ on $\K^*(X)$.
}\end{definition}
The classical Hutchinson--Barnsley theorem \cite{BOOK:9592}, \cite{HUT} states that
each IFS $\mS$ consisting of Banach contractions on a complete metric space $X$ admits the attractor.
This result can be proved with a help of the Banach fixed point theorem as it turns out that $F_\mS$ is a Banach contraction provided each $\phi_j$ is a Banach contraction.
\begin{lemma}\label{lem3}
Let $(X,d)$ be a metric space and $\mS=(X,(\phi_j)_{j=1}^{L})$  be an IFS consisting of Banach contractions. Then $F_\mS$ is a Banach contraction and $\on{Lip}(F_\mS)\leq\max\{\on{Lip}(\phi_j):j=1,...,L\}$.
\end{lemma}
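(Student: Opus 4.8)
The plan is to show that $F_\mS$ inherits the contraction property from the individual maps $\phi_j$ by reducing everything to two elementary facts about the Hausdorff metric $h$. First I would record that $F_\mS$ really does map $\K^*(X)$ into itself: each $\phi_j$ is continuous, so $\phi_j(K)$ is compact whenever $K$ is, and a finite union of nonempty compact sets is again nonempty and compact. With that in place, the statement becomes a pure Lipschitz estimate for $F_\mS$ on the metric space $(\K^*(X),h)$.

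The two auxiliary inequalities I would isolate are, for $A,B,A_1,\dots,A_L,B_1,\dots,B_L\in\K^*(X)$ and a Lipschitz selfmap $\phi$ of $X$:
$$
h(\phi(A),\phi(B))\le \on{Lip}(\phi)\,h(A,B),\qquad
h\Big(\bigcup_{j=1}^L A_j,\bigcup_{j=1}^L B_j\Big)\le \max_{1\le j\le L}h(A_j,B_j).
$$
For the first, I would use the two-sided formula $h(A,B)=\max\{\sup_{a\in A}d(a,B),\ \sup_{b\in B}d(b,A)\}$. Given $a\in A$, compactness of $B$ yields $b\in B$ with $d(a,b)=d(a,B)\le h(A,B)$; then $d(\phi(a),\phi(B))\le d(\phi(a),\phi(b))\le \on{Lip}(\phi)\,d(a,b)\le \on{Lip}(\phi)\,h(A,B)$. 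Taking the supremum over $a\in A$ bounds one of the one-sided distances, and the symmetric argument bounds the other. For the second, the cleanest route is the neighborhood characterization $h(A,B)\le\varepsilon\iff A\subseteq B_\varepsilon\text{ and }B\subseteq A_\varepsilon$, where $C_\varepsilon$ denotes the closed $\varepsilon$-neighborhood of $C$. Setting $\varepsilon=\max_j h(A_j,B_j)$, each $A_j\subseteq (B_j)_\varepsilon\subseteq(\bigcup_k B_k)_\varepsilon$, hence $\bigcup_j A_j\subseteq(\bigcup_k B_k)_\varepsilon$; the reverse inclusion is symmetric.

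Combining the two inequalities finishes the proof. For $K,K'\in\K^*(X)$,
$$
h(F_\mS(K),F_\mS(K'))=h\Big(\bigcup_{j=1}^L\phi_j(K),\bigcup_{j=1}^L\phi_j(K')\Big)\le \max_j h(\phi_j(K),\phi_j(K'))\le \max_j \on{Lip}(\phi_j)\,h(K,K'),
$$
where the first inequality is the union estimate applied to $A_j=\phi_j(K)$, $B_j=\phi_j(K')$, and the second is the Lipschitz estimate. Since $\max_j\on{Lip}(\phi_j)<1$, this shows $F_\mS$ is a Banach contraction with $\on{Lip}(F_\mS)\le\max_j\on{Lip}(\phi_j)$, as claimed.

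The only real subtlety, and the step I expect to need the most care, is the union inequality: one must make sure the one-sided distances behave correctly under unions and that passing to the $\max$ (rather than a sum) is legitimate. The neighborhood formulation makes this transparent, whereas a direct $\sup$–$\inf$ manipulation is easy to get wrong, since the key fact $d(a,\bigcup_k B_k)=\min_k d(a,B_k)\le d(a,B_j)$ has to be used in the correct direction. Everything else is routine bookkeeping.
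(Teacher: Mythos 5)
Your proposal is correct and complete. Note that the paper itself offers no proof of this lemma: it is recalled as a known ingredient of the classical Hutchinson--Barnsley theory, so there is no argument in the text to compare yours against. What you give is the standard proof, and both key steps are sound: the Lipschitz estimate $h(\phi(A),\phi(B))\le \on{Lip}(\phi)\,h(A,B)$ and the union estimate $h\bigl(\bigcup_j A_j,\bigcup_j B_j\bigr)\le\max_j h(A_j,B_j)$ via the closed-neighborhood characterization of $h$, together with the observation that $F_\mS$ maps $\K^*(X)$ into itself. One minor remark: compactness of $B$ is not actually needed in the first estimate, since $d(a,B)\le h(A,B)$ already gives, for every $\delta>0$, some $b\in B$ with $d(a,b)\le h(A,B)+\delta$, and letting $\delta\to 0$ yields the same bound; this makes the argument work verbatim for bounded closed sets as well.
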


Given an IFS $\mS=(X,(\phi_j)_{j=1}^L)$ consisting of Banach contractions, we will denote $$\alpha_\mS:=\max\{\on{Lip}(\phi_1),...,\on{Lip}(\phi_L)\}.$$

\section{Fuzzyfication of the Hutchinson--Barnsley theorem}\label{sec:Fuzzyfication of the Hutchinson--Barnsley}

Now we recall the fuzzy version of the Hutchinson--Barnsely theorem. Its particular version can be found in \cite{Cabrelli-1992}, but the presented general case can be deduced from \cite{Oliveira-2017}.
Let $X$ be a set; we recall that $u$ is a fuzzy subset of $X$ if $u$ is a function from $X$ to $[0,1]$. The family of fuzzy subsets of $X$ is denoted by $\mathcal{F}_{X}$.  It means that each point $x$ has a grade of membership  $0\leq u(x)\leq 1$ in the set $u$. Here, $u(x)=0$ indicates that $x$ is not in $u$ and $u(x)=0.4$ indicates that $x$ is a member of $u$ with membership degree $0.4$.
\begin{figure}[!ht]
 \centering
 \includegraphics[width=2cm]{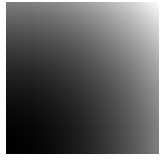}
  \caption{Representation of the fuzzy set $u(x,y)=1/2 (x^2 + y^2)$ in $X=[0,1]^2$ as a grey scale image.  In this case $u(1,1)=1$=a white pixel and $u(0,0)=0$=a black pixel.}\label{example fuzzy set}
\end{figure}

Given a fuzzy set $u:X\to[0,1]$ and $\alpha\in[0,1]$, its $\alpha$-cut is the set
$$[u]^\alpha:=\left\{\begin{array}{ccc}\{x\in X \, | \, u(x)\geq\alpha\}&\mbox{if}&\alpha>0\\\on{cl}(\{x\in X\, | \, u(x)>0\})&\mbox{if}&\alpha=0\end{array}\right.,$$
where $\on{cl}(A)$ denotes the closure of $A$.

To make this theory work we need to restrict $\mathcal{F}_{X}$ to a smaller family,
$$\mathcal{F}_{X}^* := \{ u \in \mathcal{F}_{X} \; | u \text{ is normal, upper semicontinuous and compactly supported}\}. \; $$
We recall that $u:X\to[0,1]$ is:\\
- \emph{upper semicontinuous} (usc) if for any $\alpha>0$, the set $[u]^\alpha$ is closed;\\
- \emph{compactly supported}, if the set $\on{supp}(u):=[u]^0$ is compact;\\
- \emph{normal}, if $u(x)=1$ for some $x \in X$.

In particular, for $u\in\F^*_X$, all $\alpha$-cuts are nonempty and compact.
The metric that makes $\mathcal{F}_{X}^*$ complete is the {version of the} Haussdorf-Pompeiu metric. Since $\K^*(X)$ contains all the $\alpha$-cuts, we can define the distance $d_\infty$ in $\mathcal{F}_{X}^*$ by
$$d_\infty (u,v) := \sup_{\alpha \in [0,1]} h([u]^{\alpha},[v]^{\alpha}),$$
for $u, v \in \mathcal{F}_{X}^*$.
It is known that $d_\infty$ is a metric, which is complete provided $X$ is {complete} (see \cite{Oliveira-2017} or Diamond and Kloeden~\cite{diamond1994metric} for a somewhat more restrictive version). Moreover, as we proved in \cite[Cor. 2.4]{Oliveira-2017},
\begin{equation}\label{dinfty}d_\infty (u,v) := \sup_{\alpha \in (0,1]} h([u]^{\alpha},[v]^{\alpha}).\end{equation}
We also recall the extension principle,
\begin{definition}\label{defext}(Zadeh's Extension Principle) Given a map $\phi: X \to Y$, $ u \in \mathcal{F}_{X}$ we define a new fuzzy set $\phi(u)\in \mathcal{F}_{Y}$ as follows:
{$$\phi(u)(y) :=\sup\{u(x):x\in \phi^{-1}(y)\},$$
under the additional assumption that $\sup\emptyset=0$.}
\end{definition}
It is worth to mention, that the above definitions somehow extends the setting of the hyperspace $\K^*(X)$ with the Hausdorff metric. Indeed, a set $K\subset X$ is an element of $\K^*(X)$ iff the characteristic function $\chi_K\in \F^*_X$, and for $K,D\in\K^*(X)$, it holds $h(K,D)=d_\infty(\chi_K,\chi_D)$. In other words, the map $\K^*(X)\ni K\to \chi_K\in\F^*_X$ is an isometric embedding. Moreover, if $\phi:X\to Y$ and $K\in\K^*(X)$, then $\phi(\chi_K)=\chi_{\phi(K)}$.

A system of grey level maps $(\rho_j)_{j=1}^{L}: [0,1]\to [0,1]$  is admissible if it satisfies all the conditions\\
a) $\rho_j$ is nondecreasing;\\
b) $\rho_j$ is right continuous;\\
c) $\rho_j(0)=0$;\\
d) $\rho_j(1)=1$ for some $j$.

\begin{definition} \label{IFZS definition}
An iterated fuzzy function system \emph{(IFZS in short) $\mathcal{S}:=(X, (\phi_j)_{j=1}^{L}, (\rho_{j})_{j=1}^{L})$ consists of an IFS  $(X, (\phi_j)_{j=1}^{L}),$ with a set of admissible grey level maps $(\rho_j)_{j=1}^{L}$.\\
The operator $Z_\mathcal{S}:  \mathcal{F}_{X}^{*} \to \mathcal{F}_{X}^{*}$ defined by
$${Z_\mathcal{S}({u})}:= \bigvee_{j \in \{1,...,L\}} \rho_{j}(\phi_{j}(u)){:=\max\{ \rho_{j}(\phi_{j}(u)):j=1,...,L\}}$$
is called }the fuzzy {Hutchinson operator (FH) associated to $\mathcal{S}$.\\
A fuzzy set $u_\mS\in \mathcal{F}_{X}^{*}$  is called the }fuzzy fractal of $\mathcal{S}$ if $Z_\mathcal{S}(u_\mS)=u_\mS$, that is
$$u_\mS= \bigvee_{j \in \{1,...,L\}} \rho_{j}(\phi_{j}(u_\mS)),$$
and for every $u\in\mathcal{F}_X^*$, the sequence of iterates $(Z^k_\mathcal{S}(u))$ converges to $u_\mathcal{S}$ with respect to the metric $d_\infty$.
\end{definition}

The following result is a consequence of \cite[Thm. 3.15]{Oliveira-2017} (see Diamond and Kloeden~\cite{diamond1994metric} for more restrictive version).
\begin{lemma}\label{lem3ff}
Let $(X,d)$ be a metric space and $\mathcal{S}=(X, (\phi_j)_{j=1}^{L}, (\rho_{j})_{j=1}^{L})$ be an IFZS consisting of Banach contractions, then $Z_\mathcal{S}$ is a Banach contraction with $\on{Lip}(Z_\mS)\leq\max\{\on{Lip}(\phi_j):j=1,...,L\}$.
\end{lemma}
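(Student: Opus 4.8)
The plan is to reduce the estimate to the level of $\alpha$-cuts, where the classical Hutchinson estimates behind Lemma~\ref{lem3} apply, and then take the supremum over $\alpha$ via formula~\eqref{dinfty}. Concretely, I would fix $u,v\in\F^*_X$ and $\alpha\in(0,1]$ and first seek a description of $[Z_\mS(u)]^\alpha$ purely in terms of $\alpha$-cuts of $u$. Once $[Z_\mS(u)]^\alpha$ and $[Z_\mS(v)]^\alpha$ are written as finite unions of Lipschitz images of cuts of $u$ and $v$ at matching levels, the desired contraction constant drops out.

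The computation rests on three $\alpha$-cut identities. First, since the maximum is over a finite family, $[\bigvee_j w_j]^\alpha=\bigcup_j[w_j]^\alpha$ holds set-theoretically for every $\alpha>0$, because $\max_j w_j(x)\geq\alpha$ iff $w_j(x)\geq\alpha$ for some $j$. Second, for a continuous $\phi$ and $w\in\F^*_X$ one has $[\phi(w)]^\alpha=\phi([w]^\alpha)$ for $\alpha>0$; here the inclusion $\supseteq$ is the only nontrivial one and follows from the compactness of the cuts $[w]^{\alpha'}$ (for $\alpha'<\alpha$) together with the upper semicontinuity of $w$, a fact established in \cite{Oliveira-2017}. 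Third --- and this is the step that genuinely uses admissibility of the grey level maps --- for a nondecreasing, right-continuous $\rho$ with $\rho(0)=0$ I would introduce the generalized inverse $\beta(\alpha):=\inf\{t\in[0,1]:\rho(t)\geq\alpha\}$ and prove that, for $\alpha>0$,
$$[\rho\circ w]^\alpha=[w]^{\beta(\alpha)}\quad\text{whenever }\{t:\rho(t)\geq\alpha\}\neq\emptyset,$$
and $[\rho\circ w]^\alpha=\emptyset$ otherwise. The equivalence $\rho(t)\geq\alpha\iff t\geq\beta(\alpha)$ underlying this is exactly where right-continuity of $\rho$ is needed (it guarantees $\rho(\beta(\alpha))\geq\alpha$, so the infimum is effective), while $\rho(0)=0$ forces $\beta(\alpha)>0$, keeping us inside the range $(0,1]$ on which the cut metric operates.

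Combining the three identities gives, writing $\beta_j$ for the generalized inverse of $\rho_j$,
$$[Z_\mS(u)]^\alpha=\bigcup_{j\in J_\alpha}\phi_j\big([u]^{\beta_j(\alpha)}\big),\qquad J_\alpha:=\{j:\rho_j(1)\geq\alpha\},$$
with $\beta_j(\alpha)\in(0,1]$ for $j\in J_\alpha$, and likewise for $v$; note $J_\alpha\neq\emptyset$ since $\rho_j(1)=1$ for some $j$. I would then invoke the two elementary Hausdorff estimates $h(\bigcup_j A_j,\bigcup_j B_j)\leq\max_j h(A_j,B_j)$ and $h(\phi_j(A),\phi_j(B))\leq\on{Lip}(\phi_j)\,h(A,B)$ to obtain
$$h\big([Z_\mS(u)]^\alpha,[Z_\mS(v)]^\alpha\big)\leq\max_{j\in J_\alpha}\on{Lip}(\phi_j)\,h\big([u]^{\beta_j(\alpha)},[v]^{\beta_j(\alpha)}\big)\leq\Big(\max_{1\leq j\leq L}\on{Lip}(\phi_j)\Big)\,d_\infty(u,v),$$
where the last step uses $\beta_j(\alpha)\in(0,1]$ and the definition~\eqref{dinfty}. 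Taking the supremum over $\alpha\in(0,1]$ yields $d_\infty(Z_\mS(u),Z_\mS(v))\leq(\max_j\on{Lip}(\phi_j))\,d_\infty(u,v)$, and since each $\phi_j$ is a Banach contraction this constant is $<1$, so $Z_\mS$ is a Banach contraction with the asserted bound.

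I expect the main obstacle to be the grey-level identity: pinning down $\beta(\alpha)$, checking that right-continuity upgrades the natural inclusion to an equality, and correctly handling the levels at which a given $\rho_j$ never reaches $\alpha$ (the empty-cut case), which is precisely why the index set $J_\alpha$ must be introduced. Once that identity is in place, the remaining steps are routine applications of the standard Hausdorff-metric inequalities and of~\eqref{dinfty}.
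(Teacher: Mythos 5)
Your proof is correct, but it is worth pointing out that it does not follow the paper's route at all, because the paper contains no proof of Lemma~\ref{lem3ff}: the lemma is quoted there as a consequence of Theorem~3.15 of \cite{Oliveira-2017}, which treats the general GIFZS case (your setting is the instance $m=1$ of that theorem). What you have written is a direct, self-contained argument, and it is essentially the classical mechanism going back to \cite{Cabrelli-1992}: the level-set decomposition $[Z_\mS(u)]^\alpha=\bigcup_{j\in J_\alpha}\phi_j\bigl([u]^{\beta_j(\alpha)}\bigr)$ with generalized inverses $\beta_j$, followed by the two standard Hausdorff inequalities and the reduction \eqref{dinfty} to cuts at positive levels. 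Your handling of the three delicate points is exactly right: right-continuity of $\rho_j$ makes $\{t:\rho_j(t)\geq\alpha\}$ closed so that $\rho_j(t)\geq\alpha\iff t\geq\beta_j(\alpha)$; $\rho_j(0)=0$ forces $\beta_j(\alpha)>0$ so the cuts of $u,v$ you compare are at levels in $(0,1]$; and $\rho_j(1)=1$ for some $j$ gives $J_\alpha\neq\emptyset$, so all unions are over the same nonempty index set for $u$ and $v$, which is what legitimizes $h\bigl(\bigcup_{j\in J_\alpha}A_j,\bigcup_{j\in J_\alpha}B_j\bigr)\leq\max_{j\in J_\alpha}h(A_j,B_j)$. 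The only point you leave tacit is that $Z_\mS(u),Z_\mS(v)$ again belong to $\F^*_X$, which is needed to invoke \eqref{dinfty} for them; this is built into Definition~\ref{IFZS definition} (and in fact follows from your own cut identities, which show the positive cuts of $Z_\mS(u)$ are nonempty finite unions of compact sets, with normality coming from the index $j$ with $\rho_j(1)=1$), so it is a one-sentence addition rather than a gap. In short: the citation-based route of the paper buys generality (it covers GIFZS of any order $m$ at once), while your argument buys self-containedness and makes visible precisely where each admissibility condition on the grey level maps is used.
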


Given an IFZS $\mS=(X,(\phi_j)_{j=1}^L,(\rho_j)_{j=1}^L)$ consisting of Banach contractions, we will denote $\alpha_\mS:=\max\{\on{Lip}(\phi_1),...,\on{Lip}(\phi_L)\}$.

\section{Discretization of fixed point theorems}\label{sec:discret fixed point thm}

\begin{definition}\emph{
A subset ${\hat{X}}$ of a metric space $(X,d)$ is called an }$\ve$-net\emph{ of $X$, if for every $x\in X$, there is $y\in {\hat{X}}$ such that $d(x,y)\leq \ve$.\\
A map $r:X\to {\hat{X}}$ such that $r(x)=x$ for $x\in {\hat{X}}$ and $d(x,r(x))\leq\ve$ for all $x\in X$ will be called an }$\ve$-projection\emph{ of $X$ to $\hat{X}$.\\
For $f:X\to X$, by its} $r$-discretization\emph{ we will call the map $\hat{f}:=(r\circ f)_{\vert \hat{X}}$
}
\end{definition}
Clearly, for each $\ve$-net $\hat{X}$ of $X$, an $\ve$-projection exists, but it need not be unique.
The following result can be considered as a discrete version of the Banach fixed point theorem.

\begin{theorem}\label{dfp}
Assume that $(X,d)$ is a complete metric space and $f:X\to X$ is a Banach contraction with the unique fixed point $x_*$ and a Lipschitz constant $\alpha$. Let $\ve>0$, ${\hat{X}}$ be an $\ve$-net, $r:X\to{\hat{X}}$ be an $\ve$-projection and $\hat{f}$ be an $r$-discretization of $f$.

For every $x\in {\hat{X}}$ and $n\in\N$,
\begin{equation}\label{approx}
d(\hat{f}^n(x),x_*)\leq\frac{5\ve}{1-\on{Lip}(f)}+\alpha^nd(x,x_*).
\end{equation}
In particular, there exists a point $y\in X$ so that $d(x_*,y)\leq \frac{6\ve}{1-\on{Lip}(f)}$ and which can be reached as an appropriate iteration of $\hat{f}$ of an arbitrary point of $\hat{X}$.

\end{theorem}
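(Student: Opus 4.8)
The plan is to reduce the whole statement to a single scalar recursion for the distances $a_n:=d(\hat f^n(x),x_*)$. First I would exploit the defining property of the $\ve$-projection. For any $x\in\hat X$ we have $\hat f(x)=r(f(x))$, hence $d(\hat f(x),f(x))=d(f(x),r(f(x)))\le\ve$. Combining this with the triangle inequality and the contractivity of $f$ (using $f(x_*)=x_*$) yields the one-step estimate
\begin{equation*}
d(\hat f(x),x_*)\le d(\hat f(x),f(x))+d(f(x),f(x_*))\le\ve+\on{Lip}(f)\,d(x,x_*).
\end{equation*}

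The point that makes iteration legitimate is that $\hat f$ maps $\hat X$ into $\hat X$, so this estimate may be reapplied with $\hat f^n(x)\in\hat X$ in place of $x$. Writing $L:=\on{Lip}(f)$, this gives the recursion $a_{n+1}\le\ve+L\,a_n$ with $a_0=d(x,x_*)$. Unrolling it and summing the geometric series,
\begin{equation*}
a_n\le L^n a_0+\ve\sum_{k=0}^{n-1}L^k\le\alpha^n d(x,x_*)+\frac{\ve}{1-L},
\end{equation*}
where I use $L\le\alpha$ so that $L^n\le\alpha^n$. This already establishes \eqref{approx} with the sharper additive constant $1$ in place of $5$, so the stated inequality follows \emph{a fortiori}; recovering the exact constant $5$ only becomes relevant if one instead routes the argument through the true orbit $f^n(x)$ and bundles several crude triangle inequalities, which I would avoid in favour of the direct recursion above.

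For the ``in particular'' clause I would use $\alpha<1$. Since $\alpha^n d(x,x_*)\to0$, I can choose $n$ large enough that $\alpha^n d(x,x_*)\le\frac{\ve}{1-L}$; then $y:=\hat f^n(x)\in\hat X\subset X$ satisfies $d(x_*,y)\le\frac{\ve}{1-L}+\frac{\ve}{1-L}=\frac{2\ve}{1-L}\le\frac{6\ve}{1-\on{Lip}(f)}$, and $y$ is indeed reached as an iterate of $\hat f$ from the arbitrary starting point $x\in\hat X$.

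I anticipate essentially no serious obstacle: the entire content is the one-step estimate together with a geometric-series bound. The only points demanding care are that the $\ve$-projection inequality $d(z,r(z))\le\ve$ be invoked at \emph{every} step, which is justified precisely because $\hat X$ is $\hat f$-invariant, and the bookkeeping between a generic Lipschitz constant $\alpha$ and the sharp value $\on{Lip}(f)$; I keep these separate so that the exponential factor carries $\alpha$ while the additive error carries $1-\on{Lip}(f)$, matching the form of \eqref{approx}.
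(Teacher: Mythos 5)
Your proof is correct, and it takes a genuinely different (and cleaner) route than the paper's. The paper first establishes an almost-Lipschitz property of the discretized map on the net, namely $d(\hat f(x),\hat f(y))\le 2\ve+\alpha d(x,y)$ for $x,y\in\hat X$, and then compares two discrete orbits; since this requires $x_*$ to be a fixed point of the discretized map, it forces a case distinction on whether $x_*\in\hat X$, and when $x_*\notin\hat X$ the authors extend $\hat f$ to $\tilde X=\hat X\cup\{x_*\}$ by $\tilde f(x_*)=x_*$, which degrades the one-step error to $5\ve$ and produces the constant $\frac{5\ve}{1-\alpha}$. You instead compare the discrete orbit directly with $x_*$ itself: the one-step bound $d(\hat f(x),x_*)\le\ve+\operatorname{Lip}(f)\,d(x,x_*)$ uses the projection error only once per step and uses $f(x_*)=x_*$ exactly, with no projection error at the fixed point, so no case analysis or extension is needed and the geometric-series unrolling gives the sharper constant $\frac{\ve}{1-\operatorname{Lip}(f)}$, which implies \eqref{approx} a fortiori (and correspondingly $\frac{2\ve}{1-\operatorname{Lip}(f)}$ in place of $\frac{6\ve}{1-\operatorname{Lip}(f)}$ in the final clause). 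What the paper's route buys in exchange is the two-point estimate comparing $\hat f$-orbits of \emph{arbitrary} pairs of points of the net, which is of independent interest as a stability statement about $\hat f$; but for the theorem as stated your perturbed-iteration argument is both shorter and quantitatively stronger, and your attention to the two points that matter — the $\hat f$-invariance of $\hat X$ licensing the recursion, and the distinction between $\alpha$ and $\operatorname{Lip}(f)$ — is exactly right.
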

\begin{proof}
Set $\alpha:=\on{Lip}(f)$. We first prove that
\begin{equation}\label{red2}
\forall_{x,y\in \hat{X}}\;d(\hat{f}(x),\hat{f}(y))\leq 2\ve+\alpha d(x,y).
\end{equation}

For $x,y\in\hat{X}$, we have
$$
d(\hat{f}(x),\hat{f}(y))\leq d(\hat{f}(x),f(x))+d(f(x),f(y))+d(f(y),\hat{f}(y))\leq$$ $$\leq d(r(f(x)),f(x))+\alpha d(x,y)+d(f(y),r(f(y)))\leq 2\ve+\alpha d(x,y).
$$

Consider two cases:\\
Case 1: $x_*\in {\hat{X}}$.\\
By a  simple induction, we can show that for every $x,y\in {\hat{X}}$ and $n\in\N$,
$$
d(\hat{f}^n(x),\hat{f}^n(y))\leq 2\ve\sum_{i=0}^{n-1}\alpha^i+\alpha^nd(x,y)
$$
As $r(x_*)=x_*$, we have
$$\hat{f}(x_*)=r(f(x_*))=r(x_*)=x_*$$
so $x_*$ is a fixed point of $\hat{f}$. Hence for every $x\in {\hat{X}}$ and $n\in\N$,
$$
d(\hat{f}^n(x),x_*)=d(\hat{f}^n(x),\hat{f}^n(x_*))\leq2\ve\sum_{i=0}^{n-1}\alpha^i+\alpha^nd(x,x_*)\leq \frac{2\ve}{1-\alpha}+\alpha^nd(x,x_*),
$$
and we are done.\\
Case2: $x_*\notin {\hat{X}}$.\\
Set $\tilde{X}:={\hat{X}}\cup\{x_*\}$, and extend $r$ to a map $\tilde{r}:\tilde{X}\to\tilde{X}$ by setting $\tilde{r}(x_*)=x_*$, and the map $\hat{f}$, to the map $\tilde{f}:\tilde{X}\to\tilde{X}$ by setting $\tilde{f}(x_*)=x_*$.\\
Now observe that for every $x,y\in \tilde{X}$,
\begin{equation}\label{11}
d(\tilde{f}(x),\tilde{f}(y))\leq 5\ve+\alpha d(x,y)
\end{equation}
Indeed, if $x,y\neq x_*$, then it follows from that fact that $\tilde{f}(x)=\hat{f}(x)$,  $\tilde{f}(y)=\hat{f}(y)$, and earlier calculations. If $x\neq x_*$, then, choosing $x'\in {\hat{X}}$ so that $d(x',x_*)\leq\ve$, we have
$$
d(\tilde{f}(x_*),\tilde{f}(x))\leq d(\tilde{f}(x_*),f(x'))+d(f(x'),\tilde{f}(x'))+d(\tilde{f}(x'),\tilde{f}(x))\leq$$
$$\leq
d(f(x_*),f(x'))+d(f(x'),r(f(x'))+d(\hat{f}(x'),\hat{f}(x))\leq
d(x_*,x')+\ve+2\ve+\alpha d(x',x)\leq $$ $$\leq 4\ve+\alpha d(x,x_*)+\alpha d(x_*,x')\leq 5\ve+\alpha d(x,x_*).
$$
We proved (\ref{11}). Using simple inductive argument, we can prove that for every $n\in\N$,
\begin{equation}\label{2}
d(\tilde{f}^n(x),\tilde{f}^n(y))\leq 5\ve\sum_{i=0}^{n-1}\alpha^i+\alpha^n d(x,y)
\end{equation}
Now if $x\in {\hat{X}}$, then for every $n\in\N$, $\tilde{f}^n(x)=\hat{f}^n(x)$, so by (\ref{2}) we have
$$
d(\hat{f}^n(x),x_*)=d(\tilde{f}^n(x),\tilde{f}^n(x_*))\leq 5\ve\sum_{i=0}^{n-1}\alpha^i+\alpha^nd(x,x_*)\leq \frac{5\ve}{1-\alpha}+\alpha^nd(x_*,x),
$$
and the proof is finished.
\end{proof}

\section{Discretization of sets and fuzzy sets}\label{sec:Discretization of sets and fuzzy sets}

\begin{definition}\emph{
We say that an $\ve$-net $\hat{X}$ of a metric space $X$ is }proper\emph{, if for every bounded $D\subset X$, the set $D\cap\hat{X}$ is finite.}
\end{definition}
Note that proper $\ve$-nets are discrete (as topological subspaces), but the converse need not be true. For example, there exists an infinite subset $E$ of a unit sphere in an infinite dimensional normed space, so that $||x-y||=1$ for all $x,y\in E$, $x\neq y$.\\
The existence of proper $\ve$-nets for every $\ve>0$ is quaranteed by the assumption that $X$ has so-called \emph{Heine--Borel property}, that is, the assumption that each closed and bounded set is compact. In particular, Euclidean spaces and compact spaces admit such nets.

Now assume that $(X,d)$ is a metric space and ${\hat{X}}$ is a proper $\ve$-net. Clearly, $\K({\hat{X}})$ consists of all finite subsets of ${\hat{X}}$. Now if $r:X\to{\hat{X}}$ is an $\ve$-projection, then by the same letter $r$ we will denote the map $r:\K^*(X)\to\K({\hat{X}})$ defined by $r(K)=\{r(x):x\in K\}$. Finally, let $\mS=(X,(\phi_j)_{j=1}^L)$ be an IFS.

It is routine to check that:
\begin{lemma}\label{lemm2}In the above frame\\
(a) $\K({\hat{X}})$ is an $\ve$-net of $\K^*(X)$;\\
(b) $r$ is an $\ve$-projection of $\K^*(X)$ to $\K({\hat{X}})$;\\
(c) $F_{\hat{\mS}}=\hat{F}_\mS$, where $\hat{F}_\mS=(r\circ F_\mS)_{\vert \K^*(\hat{X})}$ is the discretization of $F_\mS$, and $\hat{\mS}:=(X,(\hat{\phi}_j)_{j=1}^L)$ and $\hat{\phi}_j=(r\circ \phi_j)_{\vert \hat{X}}$ is the discretization of $\phi_j$.
\end{lemma}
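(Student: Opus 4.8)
The plan is to dispatch (a), (b), (c) in turn, letting the set-image map $r(K)=\{r(x):x\in K\}$ do the work in the first two parts and reducing the third to a formal manipulation with unions. Throughout, the Hausdorff metric $h$ and the two-sided distance estimate are the only analytic tools needed.

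\textbf{Parts (a) and (b).} First I would verify that $r$ actually maps $\K^*(X)$ into $\K(\hat X)$, i.e. that $r(K)$ is a nonempty finite subset of $\hat X$ whenever $K$ is nonempty and compact. Nonemptiness is immediate. For finiteness, I would use that $K$ is compact, hence bounded, say $K\subset B(x_0,M)$; then every $x\in K$ gives $d(r(x),x_0)\le d(r(x),x)+d(x,x_0)\le\ve+M$, so $r(K)\subset\hat X\cap B(x_0,\ve+M)$, which is finite precisely because $\hat X$ is a \emph{proper} $\ve$-net. This is the single place where properness is genuinely used, and it is the only step I would flag as an obstacle, though a very mild one. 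Next I would prove the one estimate $h(K,r(K))\le\ve$, which simultaneously gives (a) and the distance clause of (b): for one inclusion, each $x\in K$ satisfies $d(x,r(K))\le d(x,r(x))\le\ve$; for the other, each $y=r(x)\in r(K)$ satisfies $d(y,K)\le d(r(x),x)\le\ve$. Finally, for the fixed-set requirement of the $\ve$-projection in (b), I would note that if $K\in\K(\hat X)$ then $K\subset\hat X$, so $r$ fixes every point of $K$ and therefore $r(K)=K$.

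\textbf{Part (c).} This is purely algebraic and carries no real difficulty. I would fix $K\in\K(\hat X)$ and expand both operators. By the definition of the discretized Hutchinson operator,
$$\hat F_\mS(K)=r(F_\mS(K))=r\Big(\bigcup_{j=1}^L\phi_j(K)\Big),$$
while the IFS built from the discretized maps $\hat\phi_j=(r\circ\phi_j)_{\vert\hat X}$ gives
$$F_{\hat\mS}(K)=\bigcup_{j=1}^L\hat\phi_j(K)=\bigcup_{j=1}^L\{r(\phi_j(x)):x\in K\}.$$
The two expressions coincide because the set-image map sends a union to the union of the images, so $r\big(\bigcup_j\phi_j(K)\big)=\bigcup_j r(\phi_j(K))=\bigcup_j\{r(\phi_j(x)):x\in K\}$. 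This is just the elementary identity that the image of a union is the union of images, applied first to the union over $j$ and then inside each $\phi_j(K)$. Hence $F_{\hat\mS}=\hat F_\mS$ on $\K(\hat X)$, which finishes the argument.

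In summary, the proposal needs no hard analysis: the only point requiring genuine care is the finiteness of $r(K)$, which is exactly where the properness hypothesis on $\hat X$ enters, and everything else is a direct computation with the Hausdorff distance and with images of unions.
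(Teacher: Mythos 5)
Your proposal is correct: the paper states Lemma~\ref{lemm2} without proof (``It is routine to check''), and your argument is precisely the standard verification that is being left to the reader --- properness of $\hat{X}$ giving finiteness of $r(K)$, the two-sided bound $h(K,r(K))\leq\ve$ giving (a) and (b), and the image-of-a-union identity giving (c). No gaps; the one step you flag (finiteness of $r(K)$, where properness enters) is indeed the only point requiring any care.
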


Now we extend the considerations to fuzzy sets.
Let $(X,d)$ be a metric space, $\ve>0$, ${\hat{X}}$ be a proper $\ve$-net and $r:X\to {\hat{X}}$ be an $\ve$-projection. By the same letter $r$ we will denote the map which adjust to each $u\in \F^*_X$, the fuzzy set $r(u):X\to[0,1]$
defined by
$$
\forall_{x\in X}\;r(u)(x):=\sup\{u(y)|y\in r^{-1}(x)\},
$$
Observe that $r(u)$ is defined as in Definition \ref{defext}, when considering the map $r$ as the map $r:X\to \hat{X}$.\\
For a fuzzy set $u:\hat{X}\to [0,1]$, let $e(u):X\to[0,1]$ be the natural extension of $u$, that is, $\tilde{u}$ is defined by: be the fuzzy set on $X$ defined by
$$e(u)(x)=\left\{\begin{array}{cl}u(x),&\mbox{if}\;\;x\in\hat{X}\\0,&\mbox{if}\;\;x\in X\setminus\hat{X}\end{array}\right.$$
Then let $\tilde{\F}^*_{\hat{X}}:=\{e(u):u\in \F^*_{\hat{X}}\}$ and $\mS=(X,(\phi_j)_{j=1}^L,(\rho_j)_{j=1}^L)$ be an IFZS.

\begin{lemma}\label{lemma4}
In the above frame:\\
(a) $\tilde{\F}^*_{{\hat{X}}}=\{u\in\F^*_X\;|\;u\mbox{ is normal and }\{x:u(x)>0\}\mbox{ is finite and contained in }\hat{X}\}$;\\
(b) the map $\F^*_{\hat{X}}\ni u\to e(u)\in\tilde{\F}^*_{\hat{X}}$ is an isometric embedding;\\
(c) $\tilde{\F}^*_{{\hat{X}}}$ is an $\ve$-net of $\F^*_X$;\\
(d) the map $r:\F^*_X\to\tilde{\F}^*_{\hat{X}}$ is an $\ve$-projection;\\
(e) for any $u\in\tilde{\F}^*_{\hat{X}}$, it holds $e(Z_{\hat{\mS}}(u_{\vert \hat{X}}))=\hat{Z}_\mS(u)$, where $\hat{Z}_\mS=(r\circ Z_\mS)_{\vert \tilde{\F}^*_{\hat{X}}}$ is the discretization of $Z_\mS$, and $\hat{\mS}:=(\hat{X},(\hat{\phi}_j)_{j=1}^L,(\rho_j)_{j=1}^L)$ and $\hat{\phi}_j=(r\circ \phi_j)_{\vert \hat{X}}$ is the discretization of $\phi_j$.

\end{lemma}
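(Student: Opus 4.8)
The plan is to dispatch the five parts in order, observing that (a)--(d) are the fuzzy analogues of Lemma~\ref{lemm2} for sets, while (e) mirrors Lemma~\ref{lemm2}(c). The single fact driving everything is that for $\alpha>0$ one has $[e(u)]^\alpha=[u]^\alpha$ (the cut on the right being computed inside $\hat X$), so that, by \eqref{dinfty}, every $d_\infty$-computation reduces to Hausdorff distances between positive cuts and the troublesome $\alpha=0$ cut can be ignored.

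For (a) I would prove both inclusions. If $u\in\F^*_{\hat X}$, then $\on{supp}(u)$ is compact in $\hat X$, hence bounded, hence \emph{finite}, since $\hat X$ is a proper $\ve$-net; therefore $e(u)$ is normal, has finite positive support contained in $\hat X$, and has every positive cut a finite (thus closed) set, so $e(u)\in\F^*_X$. Conversely, a normal $u\in\F^*_X$ with finite positive support inside $\hat X$ restricts to an element of $\F^*_{\hat X}$ with $e(u_{\vert\hat X})=u$, because $u$ vanishes off $\hat X$. Part (b) is then immediate from $[e(u)]^\alpha=[u]^\alpha$ together with the fact that the Hausdorff metric of subsets of $\hat X$ is the restriction of that on $X$: by \eqref{dinfty} the two values of $d_\infty$ coincide.

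The analytic core is (c)--(d), which I would obtain simultaneously by proving $d_\infty(u,r(u))\le\ve$ for all $u\in\F^*_X$. First, $r(u)\in\tilde\F^*_{\hat X}$: it is normal since $r(u)(r(x_0))\ge u(x_0)=1$ at a point where $u=1$, and its positive support sits in $r(\on{supp}(u))$, which is bounded (as $r$ moves points by at most $\ve$) and contained in $\hat X$, hence finite, so (a) applies. The fixed-point property $r(v)=v$ for $v\in\tilde\F^*_{\hat X}$ holds because for $x\in\hat X$ the only point of $\hat X$ in $r^{-1}(x)$ is $x$ itself (and $v$ vanishes elsewhere), while for $x\notin\hat X$ the fibre $r^{-1}(x)$ is empty. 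For the metric bound I show $h([u]^\alpha,[r(u)]^\alpha)\le\ve$ for each $\alpha\in(0,1]$: the forward estimate uses $p\mapsto r(p)$, and for the reverse I take $q\in[r(u)]^\alpha$, choose $y_n\in r^{-1}(q)$ with $u(y_n)\to\sup\{u(y):r(y)=q\}\ge\alpha$, and pass to a convergent subsequence $y_{n_k}\to y_*$ inside the closed ball $\{y:d(y,q)\le\ve\}$. I expect this reverse estimate to be the main obstacle, precisely because the supremum defining $r(u)(q)$ need not be attained: producing $y_*$ requires compactness of that ball, guaranteed by the Heine--Borel property under which proper $\ve$-nets exist, and upper semicontinuity of $u$ to conclude $u(y_*)\ge\alpha$; since $d(y_*,q)\le\ve$, this places a point of $[u]^\alpha$ within $\ve$ of $q$. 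Taking the supremum over $\alpha\in(0,1]$ and using \eqref{dinfty} gives $d_\infty(u,r(u))\le\ve$, hence (d), and (c) follows at once.

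Finally, for (e) I would evaluate both sides at a point $x\in\hat X$ and unwind the definitions: the crucial feature is that $u\in\tilde\F^*_{\hat X}$ has a \emph{finite} positive support $S\subseteq\hat X$, so that $\phi_j(u)$, the fibrewise suprema, and the outer supremum $\sup_{z:r(z)=x}$ are all maxima over finite sets, hence attained. The nondecreasing maps $\rho_j$ therefore commute with them, and both $e(Z_{\hat\mS}(u_{\vert\hat X}))(x)$ and $\hat Z_\mS(u)(x)=r(Z_\mS(u))(x)$ collapse to $\max_j\rho_j\big(\max\{u(y):y\in S,\ r(\phi_j(y))=x\}\big)$, while off $\hat X$ both vanish (the right-hand side because $r^{-1}(x)=\emptyset$). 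The only delicate point here is this interchange of $\rho_j$ with the suprema, which would fail at an unattained supremum where $\rho_j$ jumps, and is exactly why the identity is stated on $\tilde\F^*_{\hat X}$ rather than on all of $\F^*_X$.
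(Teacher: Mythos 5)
Your overall strategy -- (a) via properness forcing finite supports, (b) via equality of $\alpha$-cuts, (c)/(d) proved mutually by showing $r(u)\in\tilde{\F}^*_{\hat X}$ and then $h([u]^\alpha,[r(u)]^\alpha)\le\ve$ cut by cut, (e) by unwinding both sides over the finite support -- is the same as the paper's. But there is one step in the core of (c)/(d) that does not work under the lemma's actual hypotheses: to produce the limit point $y_*$ in the reverse Hausdorff estimate you invoke compactness of the closed ball $\{y:d(y,q)\le\ve\}$, ``guaranteed by the Heine--Borel property under which proper $\ve$-nets exist.'' The Heine--Borel property is \emph{not} part of the frame of the lemma: the standing assumptions are only that $(X,d)$ is a metric space, $\hat X$ a proper $\ve$-net, and $r$ an $\ve$-projection. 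In the paper Heine--Borel appears only as a \emph{sufficient} condition for such nets to exist, not as a consequence of their existence. For instance, $X=\R^2\setminus\{0\}$ with a uniform grid avoiding the origin is a metric space with a proper $\ve$-net, yet closed balls meeting the puncture are not compact, so your subsequence extraction fails there while the lemma still applies.

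The repair is immediate, and it is exactly how the paper sources its compactness: since $u(y_n)\to\sup\{u(y):y\in r^{-1}(q)\}\ge\alpha>0$, the tail of the sequence lies in $\{y:u(y)>0\}\subset\supp(u)$, which is compact because $u\in\F^*_X$ is compactly supported; extract the convergent subsequence inside $\supp(u)$ and conclude with upper semicontinuity as you do. (The paper phrases this same idea statically: the usc function $u$ attains its maximum on the compact set $\on{cl}(r^{-1}(q))\cap\supp(u)$, and a sequence in the fibre converging to the maximizer gives $d(y_*,q)\le\ve$.) With that one substitution your proof of (c)/(d) coincides with the paper's. The remaining parts are correct and match the paper's computations; in fact you verify a point the paper leaves implicit, namely that $r$ restricted to $\tilde{\F}^*_{\hat X}$ is the identity (needed for $r$ to be an $\ve$-projection in the sense defined), and your account of (e) -- finiteness of the support making every supremum an attained maximum, so that the nondecreasing $\rho_j$ pass through, with both sides vanishing off $\hat X$ -- is precisely the mechanism of the paper's chain of equalities.
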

\begin{proof}
(a) Take $u\in\tilde{\F}^*_{\hat{X}}$, that is, $u:X\to[0,1]$, $u(x)=0$ for $x\in X\setminus \hat{X}$ and $u_{\vert\hat{X}}:\hat{X}\to [0,1]$ is normal, usc and compactly supported. Since $\hat{X}$ is discrete, compact supportedness implies that $\{x\in X\;|\;u(x)>0\}=\{x\in\hat{X}:u(x)>0\}$ is finite. This also implies that $u$ is usc (all $\alpha$-cuts are finite), so we also have that $u\in\F^*_X$.\\
Now assume that $u:X\to [0,1]$ is normal and $\{x\in X\;|\;u(x)>0\}$ is finite and contained in $\hat{X}$. Then, obviously, $u_{\vert \hat{X}}:\hat{X}\to[0,1]$ is normal, compactly supported and usc, i.e., $u_{\hat{X}}\in\F^*_{\hat{X}}$, so $u\in\tilde{\F}^*_{\hat{X}}$.

Point (b) is obvious as for every $\alpha\in[0,1],\;[u]^\alpha=[e(u)]^\alpha$.

We will prove points (c) and (d) mutually.\\
Let $u:X\to[0,1]$ be an element of $\F^*_X$. We will prove that $r(u)$ is an element of $\tilde{\F}^*_{\hat{X}}$. Let $y\in X$ be such that $u(y)=1$, and let $x\in \hat{X}$ be such that $r(y)=x$. Then $1\geq r(u)(x)\geq u(y)=1$, so $r(u)(x)=1$ and $r(u)$ is normal. Now suppose that $M:=\{x\;|\;r(u)(x)>0\}$ is infinite. Then it must be unbounded as $M\subset \hat{X}$. For every $x\in M$, there is $y_x\in X$ so that $r(y_x)=x$ and $u(y_x)>0$. As $d(y_x,x)\leq\ve$, the set $\{y_x:x\in M\}$ must be unbounded. Since $\{y_x\;|\;x\in M\}\subset \{y\;|\;u(y)>0\}$, this gives a contradiction with the fact that $u$ is compactly supported. Hence $r(u)\in\tilde{\F}^*_{\hat{X}}$.\\
It remains to prove that $d_\infty(u,r(u))\leq\ve$. Observe that for every $\alpha\in(0,1]$,
$$
[r(u)]^\alpha=\{x\;|\;r(u)(x)\geq\alpha\}=\{x\;|\;\sup\{u(y):y\in r^{-1}(x)\}\geq \alpha\}\subset$$ $$\subset\{x\;|\;\sup\{u(y)|y\in \on{cl}({r^{-1}(x)})\}\geq \alpha\}=$$ $$= \{x\;|\;\sup\{u(y)\;|\;y\in \on{cl}({r^{-1}(x)})\cap\on{supp}(u)\}\geq \alpha\}=$$ $$=  \{x\;|\;\max\{u(y)\;|\;y\in \on{cl}({r^{-1}(x)})\cap\on{supp}(u)\}\geq \alpha\},
$$
where the last equality holds as $u$ is usc and $\on{cl}({r^{-1}(x)})\cap\on{supp}(u)$ is compact.\\
Hence let $x\in [r(u)]^\alpha$. By the above computations, there is $y\in \on{cl}({r^{-1}(x)})$ such that $u(y)\geq \alpha$, so $y\in[u]^\alpha$. Then choose $(y_n)\subset r^{-1}(x)$ so that $y_n\to y$. Since $d(y_n,x)=d(y_n,r(y_n))\leq\ve$, we also have $d(x,y)\leq\ve$.\\
Convesely, choose $y\in [u]^\alpha$. Then $d(x,y)\leq\ve$ for $x=r(y)$, and hence $r(u)(x)\geq u(y)\geq\alpha$, so $x\in[r(u)]^\alpha$. All in all, $h([u]^\alpha,[r(u)]^\alpha)\leq\ve$ and by (\ref{dinfty}), $d_\infty(u,r(u))\leq\ve$.\\
Now we prove (e). Take $u\in\tilde{\F}^*_{\hat{X}}$ and observe that for all $x\in X$,
$$
\hat{Z}_\mS(u)(x)=r(Z_\mS(u))=\sup\{Z_\mS(u)(y)\;|\;y\in r^{-1}(x)\}=$$ $$=\sup\{\max\{\rho_j(\phi_j(u)(y))\;|\;j=1,...,L\}\;|\;y\in r^{-1}(x)\}=$$ $$=\sup\{\rho_j(\sup\{u(z)\;|\;z\in \phi_j^{-1}(y)\})\;|\;j=1,...,L,\;y\in r^{-1}(x)\}=
$$
$$=\sup\{\rho_j(\sup\{u(z)\;|\;z\in \phi_j^{-1}(y)\cap \on{supp}(u)\})\;|\;j=1,...,L,\;y\in r^{-1}(x)\}\overset{\on{supp}(u)\mbox{ is finite}}{=}
$$
$$=\sup\{\rho_j(\max\{u(z)\;|\;z\in \phi_j^{-1}(y)\cap \on{supp}(u)\})\;|\;j=1,...,L,\;y\in r^{-1}(x)\}\;\;\overset{\rho_j\mbox{ is nondecreasing}}{=}
$$
$$=\sup\{\rho_j(u(z))\;|\;j=1,...,L,\;y\in r^{-1}(x),\;z\in \phi_j^{-1}(y)\cap \on{supp}(u)\}\;\;\overset{\rho_j(0)=0}{=}
$$
$$=\sup\{\rho_j(u(z))\;|\;j=1,...,L,\;y\in r^{-1}(x),\;z\in \phi_j^{-1}(y)\}=
$$
$$=\sup\{\rho_j(u(z))\;|\;j=1,...,L,\;z\in \phi_j^{-1}(r^{-1}(x))\}=\sup\{\rho_j(u(z))\;|\;j=1,...,L,\;z\in (r\circ \phi_j)^{-1}(x)\}.
$$
In particular, if $x\notin \hat{X}$, then $\hat{Z}_\mS(u)(x)=0$.

Now we switch to calculating $Z_{\hat{\mS}}(u_{\vert \hat{X}})$.
Since $u(z)=0$ for $z\notin \hat{X}$, it is easy to see that for $x\in\hat{X}$ and $j=1,...,L$,
\begin{equation}\label{reduct}
\sup\{u_{\vert\hat{X}}(z):z\in(r\circ\phi_j)^{-1}_{\vert\hat{X}}(x)\}=\sup\{u(z):z\in(r\circ\phi_j)^{-1}(x)\}.
\end{equation}
Now for every $x\in \hat{X}$, we have
$$
Z_{\hat{\mS}}(u_{\vert \hat{X}})(x)=\sup\{\rho_j((r\circ \phi_j)_{\vert \hat{X}}(u_{\vert \hat{X}})(x))\;|\;j=1,...,L\}=
$$
$$
=\sup\{\rho_j(\sup\{u_{\vert\hat{X}}(z)\;|\;z\in(r\circ \phi_j)_{\vert\hat{X}}^{-1}(x)\})\;|\;j=1,...,L\}\;\;\overset{(\ref{reduct})}{=}
$$
$$
=\sup\{\rho_j(\sup\{u(z)\;|\;z\in(r\circ \phi_j)^{-1}(x)\})\;|\;j=1,...,L\}=
$$
$$
=\sup\{\rho_j(\max\{u(z)\;|\;z\in(r\circ \phi_j)^{-1}(x)\cap\on{supp}(u)\})\;|\;j=1,...,L\}\;\;\overset{\rho_j\mbox{ is nondecreasing}}{=}
$$
$$
=\sup\{\rho_j(u(z))\;|\;j=1,...,k,\;z\in(r\circ \phi_j)^{-1}(x)\cap\on{supp}(u)\}\;\;\overset{\rho_j(0)=0}{=}
$$
$$
=\sup\{\rho_j(u(z))\;|\;j=1,...,k,\;z\in(r\circ \phi_j)^{-1}(x)\}.
$$
All in all, $e(Z_{\hat{S}}(u_{\vert \hat{X}}))=\hat{Z}_\mS(u)$.

\end{proof}

\section{IFS and IFZS discretization}\label{sec:IFS and IFZS discretization}

\begin{definition}\emph{
Given an IFS $\mS$ on a metric space $X$ with the attractor $A_\mS$ and $\delta>0$, a set $A_\delta\in\K^*(X)$ will be called an attractor of $\mS$} with resolution $\delta$\emph{, if $h(A_\delta,A_\mS)\leq\delta$.}
\end{definition}
Lemma \ref{lem3}, Theorem \ref{dfp} used for the Hutchinson operator and Lemma \ref{lemm2} imply the following ``discrete" version of the Hutchinson--Barnsley theorem.
\begin{theorem}\label{tt2}
Let $(X,d)$ be a complete metric space and $\mS=(X,(\phi_j)_{j=1}^L)$ be an IFS on $X$ consisting of Banach contractions. Let $\ve>0$, ${\hat{X}}$ be a proper $\ve$-net, $r:X\to {\hat{X}}$ be an $\ve$-projection on ${\hat{X}}$ and $\hat{\mS}:=(\hat{X},(\hat{\phi}_j)_{j=1}^L)$, where $\hat{\phi}_j=(r\circ \phi_j)_{\vert{\hat{X}}}$ is the discretization of $\phi_j$.\\
For any $K\in\K({\hat{X}})$ and $n\in\N$,
\begin{equation}\label{approx2}
h(F_{\hat{\mS}}^n(K),A_\mS)\leq\frac{5\ve}{1-\alpha_\mS}+\alpha_\mS^nh(K,A_\mS),
\end{equation}
where $A_\mS$ is the attractor of $\mS$.\\
In particular, there is $n_0\in\N$ such that for every $n\geq n_0$, $F_{\hat{\mS}}^n(K)$ is an attractor of $\mS$ with resolution $\frac{6\ve}{1-\alpha_\mS}$.
\end{theorem}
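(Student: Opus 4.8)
The plan is to apply the abstract discretization result, Theorem~\ref{dfp}, to the hyperspace $(\K^*(X),h)$, letting the Hutchinson operator $F_\mS$ play the role of the contraction $f$. First I would invoke Lemma~\ref{lem3}: it gives that $F_\mS$ is a Banach contraction with $\on{Lip}(F_\mS)\le\alpha_\mS<1$. Since $X$ is complete, $(\K^*(X),h)$ is complete as well, so by the classical Hutchinson--Barnsley theorem $F_\mS$ has a unique fixed point, which is exactly the attractor $A_\mS$; thus $A_\mS$ takes the role of $x_*$ in Theorem~\ref{dfp}.

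Next I would check that the remaining hypotheses of Theorem~\ref{dfp} are met in this hyperspace, and here everything is supplied by Lemma~\ref{lemm2}. Part~(a) says $\K(\hat X)$ is an $\ve$-net of $\K^*(X)$, part~(b) says the induced map $r:\K^*(X)\to\K(\hat X)$ is an $\ve$-projection, and part~(c) identifies $F_{\hat\mS}$ with the $r$-discretization $\hat F_\mS=(r\circ F_\mS)_{\vert\K^*(\hat X)}$ of $F_\mS$. This last identity is the crucial one, since it lets me read the iterates of the discrete Hutchinson operator $F_{\hat\mS}$ as iterates of the discretized map $\hat f$ appearing in Theorem~\ref{dfp}. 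With these identifications, inequality~(\ref{approx}) applied to the point $K\in\K(\hat X)$ yields directly
$$h(F_{\hat\mS}^n(K),A_\mS)\le\frac{5\ve}{1-\on{Lip}(F_\mS)}+\on{Lip}(F_\mS)^n\,h(K,A_\mS).$$
Because $\on{Lip}(F_\mS)\le\alpha_\mS<1$ forces both $\tfrac{1}{1-\on{Lip}(F_\mS)}\le\tfrac{1}{1-\alpha_\mS}$ and $\on{Lip}(F_\mS)^n\le\alpha_\mS^n$, this immediately upgrades to the claimed bound~(\ref{approx2}).

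For the ``in particular'' clause I would use that $\alpha_\mS<1$, so $\alpha_\mS^n h(K,A_\mS)\to 0$. Hence there is $n_0\in\N$ with $\alpha_\mS^n h(K,A_\mS)\le\frac{\ve}{1-\alpha_\mS}$ for all $n\ge n_0$; substituting this into~(\ref{approx2}) gives $h(F_{\hat\mS}^n(K),A_\mS)\le\frac{6\ve}{1-\alpha_\mS}$, which is precisely the statement that $F_{\hat\mS}^n(K)$ is an attractor of $\mS$ with resolution $\frac{6\ve}{1-\alpha_\mS}$.

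Since the proof is essentially an assembly of the prepared pieces, I do not expect a serious obstacle. The only points requiring care are the bookkeeping that replaces the possibly smaller $\on{Lip}(F_\mS)$ by $\alpha_\mS$ in both the additive and the geometric terms, and making sure that Lemma~\ref{lemm2}(c) legitimately converts the $n$-fold iterate of $F_{\hat\mS}$ into the $n$-fold discretized iterate of $F_\mS$ to which Theorem~\ref{dfp} is applied.
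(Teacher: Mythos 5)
Your proposal is correct and follows exactly the route the paper intends: the paper gives no separate proof of Theorem~\ref{tt2}, stating only that it follows from Lemma~\ref{lem3}, Theorem~\ref{dfp} applied to the Hutchinson operator on $(\K^*(X),h)$, and Lemma~\ref{lemm2}, which is precisely the assembly you carried out (including the details the paper leaves implicit, such as completeness of the hyperspace and the derivation of the resolution-$\frac{6\ve}{1-\alpha_\mS}$ clause).
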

Let us explain the thesis. Starting with an IFS $\mS=(X,(\phi_j)_{j=1}^L)$ consisting of Banach contractions, we switch to $\hat{\mS}:=(\hat{X},(\hat{\phi}_j)_{j=1}^{L})$, which is the IFS consisting of discretizations of maps from $\mS$ to $\ve$-net $\hat{X}$. Then, picking any $K\in\K^*(\hat{X})$ (that is, any finite subset of $\hat{X}$), it turns out that the sequence of iterates $(F_{\hat{\mS}}^n(K))_{k=0}^\infty$ (of the Hutchinson operator $F_{\hat{\mS}}$ adjusted to $\hat{\mS}$) gives an approximations the attractor $A_\mS$ of $\mS$ with resolution $\frac{6\ve}{1-\alpha_\mS}$.

Now we formulate and prove a fuzzy version of Theorem \ref{tt2}.% It follows directly from Lemma \ref{lem3ff}, Theorem \ref{dfp} and Lemma \ref{lemma4}.

\begin{theorem}\label{tt3}
Let $(X,d)$ be a complete metric space and $\mathcal{S}=(X, (\phi_j)_{j=1}^{L}, (\rho_{j})_{j=1}^{L})$ be an IFZS on $X$ consisting of Banach contractions. Let $\ve>0$, ${\hat{X}}$ be a proper $\ve$-net, $r:X\to {\hat{X}}$ be an $\ve$-projection on ${\hat{X}}$ and $\hat{\mathcal{S}}=(X, (\hat{\phi}_j)_{j=1}^L, (\rho_{j})_{j=1}^L)$, where $\hat{\phi}_j:=(r\circ \phi_{j})_{\vert {\hat{X}}}$ is the discretization of $\phi_j$.\\
Then for any $u\in{\F}^*_{\hat{X}}$ and $n\in\N$,
$$d_\infty(e({Z}_{\hat{\mS}}^n(u)),u_\mS)\leq\frac{5\ve}{1-\alpha_\mS}+\alpha_\mS^nd_\infty(e(u),u_\mS)$$
where $u_\mS$ is the fuzzy attractor of $\mS$.\\
In particular, there is $n_0\in\N$ such that for every $n\geq n_0$, $e({Z}_{\hat{\mS}}^n(u))$ is an attractor of $\mS$ with resolution $\frac{6\ve}{1-\alpha_\mS}$.
\end{theorem}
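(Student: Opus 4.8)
The plan is to mirror the proof of Theorem~\ref{tt2}, replacing the hyperspace $(\K^*(X),h)$ by the fuzzy space $(\F^*_X,d_\infty)$ and the Hutchinson operator by the fuzzy Hutchinson operator $Z_\mS$. The three ingredients are: the completeness of $(\F^*_X,d_\infty)$ (recorded after~(\ref{dinfty})); the fact that $Z_\mS$ is a Banach contraction with $\on{Lip}(Z_\mS)\le\alpha_\mS$ (Lemma~\ref{lem3ff}); and the discretization data supplied by Lemma~\ref{lemma4}. Concretely, I would apply Theorem~\ref{dfp} to the complete metric space $(\F^*_X,d_\infty)$ with $f:=Z_\mS$, whose unique fixed point is the fuzzy attractor $u_\mS$, and Lipschitz constant $\alpha:=\alpha_\mS$. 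By Lemma~\ref{lemma4}(c),(d) the set $\tilde{\F}^*_{\hat{X}}$ is an $\ve$-net of $\F^*_X$ and $r:\F^*_X\to\tilde{\F}^*_{\hat{X}}$ is an $\ve$-projection, so the hypotheses of Theorem~\ref{dfp} are met with the net there taken to be $\tilde{\F}^*_{\hat{X}}$ and $\hat{f}:=\hat{Z}_\mS=(r\circ Z_\mS)_{\vert\tilde{\F}^*_{\hat{X}}}$.

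Feeding this into inequality~(\ref{approx}) of Theorem~\ref{dfp} gives, for every $w\in\tilde{\F}^*_{\hat{X}}$ and $n\in\N$,
$$d_\infty(\hat{Z}_\mS^n(w),u_\mS)\le\frac{5\ve}{1-\alpha_\mS}+\alpha_\mS^n\,d_\infty(w,u_\mS).$$
The remaining step — and the only point that needs genuine verification — is to identify the discrete iteration $\hat{Z}_\mS^n$ on $\tilde{\F}^*_{\hat{X}}$ with the iteration $Z_{\hat{\mS}}^n$ of the genuine fuzzy Hutchinson operator of the discrete IFZS $\hat{\mS}$ on $\F^*_{\hat{X}}$, transported along the embedding $e$. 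Lemma~\ref{lemma4}(e) provides exactly the one-step commutation relation: for $v\in\F^*_{\hat{X}}$ one has $(e(v))_{\vert\hat{X}}=v$, so applying (e) to $u=e(v)$ yields $\hat{Z}_\mS(e(v))=e(Z_{\hat{\mS}}(v))$, i.e.\ $\hat{Z}_\mS\circ e=e\circ Z_{\hat{\mS}}$ on $\F^*_{\hat{X}}$.

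From here I would run a short induction on $n$, using that $Z_{\hat{\mS}}$ maps $\F^*_{\hat{X}}$ into itself, to upgrade the one-step relation to $\hat{Z}_\mS^n(e(u))=e(Z_{\hat{\mS}}^n(u))$ for all $u\in\F^*_{\hat{X}}$ and all $n$. Substituting $w=e(u)$ into the displayed inequality and using this identity turns the left-hand side into $d_\infty(e(Z_{\hat{\mS}}^n(u)),u_\mS)$ and the right-hand side into $\frac{5\ve}{1-\alpha_\mS}+\alpha_\mS^n d_\infty(e(u),u_\mS)$, which is precisely the asserted bound. The ``in particular'' clause then follows immediately: since $\alpha_\mS<1$, choosing $n_0$ with $\alpha_\mS^{n_0}d_\infty(e(u),u_\mS)\le\frac{\ve}{1-\alpha_\mS}$ makes the total at most $\frac{6\ve}{1-\alpha_\mS}$ for all $n\ge n_0$, exactly as in the last sentence of Theorem~\ref{dfp}.

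The only place requiring care is the bookkeeping in the commutation step — ensuring the restriction/extension pair $(\cdot)_{\vert\hat{X}}$ and $e$ really invert each other on $\F^*_{\hat{X}}$, so that Lemma~\ref{lemma4}(e) can be reapplied at each stage of the induction. This, however, is routine given Lemma~\ref{lemma4}, which has already absorbed the analytic difficulty (the interchange of $\sup$ with $\max$ and with the nondecreasing, $0$-preserving grey-level maps $\rho_j$). In effect the present theorem is a formal corollary of Theorem~\ref{dfp} and Lemma~\ref{lemma4}, in the same way that Theorem~\ref{tt2} follows from Theorem~\ref{dfp} and Lemma~\ref{lemm2}.
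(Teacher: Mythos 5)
Your proof is correct and takes essentially the same route as the paper's own argument, which simply cites Lemma~\ref{lem3ff}, Theorem~\ref{dfp} and Lemma~\ref{lemma4} (adding only the observation, which you also make, that $Z_{\hat{\mS}}$ maps $\F^*_{\hat{X}}$ into itself). You have merely spelled out the details the paper leaves implicit: the identification $\hat{Z}_\mS\circ e=e\circ Z_{\hat{\mS}}$ via Lemma~\ref{lemma4}(e) and the induction upgrading it to $n$ steps.
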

\begin{proof}
%Observe that the IFSZ $\mS
The result follows directly from Lemma \ref{lem3ff}, Theorem \ref{dfp} and Lemma \ref{lemma4}. It is worth to note that the operator $Z_{\hat{\mS}}:\F^*_{\hat{X}}\to\F^*_{\hat{X}}$. This follows from the fact that the IFSZ $\hat{\mS}$ consists of continuous maps $\hat{\phi}_j:\hat{X}\to\hat{X}$.
\end{proof}

Again, we explain the thesis. Starting with an IFZS $\mS$ consisting of Banach contractions, we switch to $\hat{\mS}$, which is the IFZS consisting of discretizations of maps from $\mS$ to the $\ve$-net $\hat{X}$. Then, picking any ``discrete"  fuzzy set $u\in{\F}^*_{\hat{X}}$, it turns out that the sequence of iterates ${Z}_{\hat{\mS}}^n(u)$ (of the  operator ${Z}_{\hat{\mS}}$ adjusted to $\hat{\mS}$) gives an approximations the attractor $u_\mS$ (more formally, the sequence of extensions $e({Z}_{\hat{\mS}}^n(u))$) with resolution $\delta> \frac{6\ve}{1-\alpha_\mS}$.

\section{Discrete version of the deterministic algorithm for IFSs} \label{sec:IFSDraw}

\subsection{Discrete deterministic algorithm for IFSs} Let $\mathcal{S}=(X,(\phi_j)_{j=1}^L)$ be an IFS satisfying the conditions of Theorem~\ref{tt2}. Then we can obtain a fractal with resolution $\delta$, approximating $A_{\mS}$, as follows:

{\tt
\begin{tabbing}
aaa\=aaa\=aaa\=aaa\=aaa\=aaa\=aaa\=aaa\= \kill
     \> \texttt{IFSDraw($\mS$)}\\
     \> {\bf input}: \\
     \> \>  \> $\delta>0$, the resolution.\\
     \> \>  \> $K \subseteq \hat{X}$, any finite and not empty subset. \\
     \> \>  \> The diameter $D$ of a ball in $(X,d)$ containing $A_{\mS}$ and $K$.\\
     \> {\bf output}: A bitmap  image representing a fractal $W$
     with resolution  at most $\delta$.\\
     \> {\bf Compute} $\displaystyle \alpha_\mS:={\rm Lip}(\mS)$\\
     \> {\bf Compute} $\varepsilon >0$ and $N \in \mathbb{N}$ such that $\frac{5\ve}{1-\alpha_\mS}+\alpha_\mS^N \, D < \delta$\\
     \> W:=K \\
     \> {\bf for n from 1 to N do}\\
     \>\>W:=$F_{\hat{S}}$(W)\\
     \> {\bf end do}\\
     \> {\bf return: Print}(W).\\
\end{tabbing}}

\begin{remark} \label{resolution choices}  After $N$ iterations, we get from Theorem~\ref{tt2} that
$h(F_{\hat{\mS}}^n(K),A_\mS)\leq\frac{5\ve}{1-\alpha_\mS}+\alpha_\mS^N \, h(K,A_\mS)$. Since $A_{\mS}$ and $K$ is contained in a ball of diameter $D$,  we obtain $h(K,A_\mS) \leq D$. Hence, to have $h(F_{\hat{\mS}}^n(K),A_\mS)\leq \delta$ it is enough to choose, for a fixed number $0 < \theta < 1$, a number $\ve>0$ such that $\frac{5\varepsilon}{1-\alpha_\mS} \leq \theta \delta$ and  $\alpha_\mS^N \, D \leq (1-\theta)\delta$, which is which is achieved by choosing
$$\varepsilon = (1-\alpha_\mS)\frac{\theta \delta}{5} \text{ and } N= \left\lceil\frac{\log((1-\theta)\delta/D)}{\log(\alpha_{\mS})}\right\rceil.$$
When $(X,d)$ is compact, we can chose $D={\rm diam}(X)$. For example, if $X=[0,1]^2$ then $D=\sqrt{2}$.
\end{remark}

\subsection{Canonical nets and projections on Euclidean spaces}
We suggest to ways of defining nets and projections on Euclidean spaces and their particular subsets. Let $X=[0,1]^d$ be the $d$-dimensional cube considered with the Euclidean metric and fix an $n\in\N$. Set
$$
\hat{X}:=\{(x[i_1],...,x[i_d]):i_1,...,i_d=0,...,n\},
$$
where $x[i]:=\frac{i}{n}$. We call $\hat{X}$ a \emph{uniform net} of $X$. It is routine to check that it is $\frac{\sqrt{d}}{2n}$-net of $X$. Finally, let $r:X\to\hat{X}$ be defined by
$$
r(z_1,...,z_d):=\left(\frac{\lceil nz_1\rceil}{n},...,\frac{\lceil nz_d\rceil}{n}\right),
$$
where $\lceil a \rceil$ denotes the ceiling of $a$. Clearly, $r$ is an $\frac{\sqrt{d}}{2n}$-projection and $r:X\to\hat{X}$. [FS: now here should be Figure 8].
\begin{remark}\emph{
(1) Instead of ceiling $\lceil\cdot\rceil$ in the definition of $r$, we could take floor $\lfloor\cdot\rfloor$ as well, and the effect will be almost the same.\\
(2) In a similar way, we could consider any ``appropriately regular" closed and bounded set, or even the whole Euclidean space $\R^d$. Is the last case there is a problem with the fact that $\hat{X}$ is infinite, but in the presented algorithms for IFSs, only those points from $\ve$-nets which are actually needed are calculated.}
\end{remark}

\subsection{Examples of discrete IFS fractals} \label{Examples of discrete IFS fractals}
We are going to consider the application of the algorithm \texttt{IFSDraw($\mS$)} to approximate some classical fractals.

Since the algorithm \texttt{IFSDraw($\mS$)} require very little computational effort, we are going to use the following setting on this section: the uniform version, starting from a single point, $700\times 700$ pixels, iterating a fairly number of times until the picture stabilizes. We present always four different iterates and the resolution $\delta$ from the formula $\frac{5\ve}{1-\alpha_\mS}+\alpha_\mS^N \, D < \delta$, where $N$ is the last iterate showed, $\alpha:=\alpha_\mS$ is the Lipschitz constante, computed in each example, $D:=\sqrt{2}$ is the diameter of $X$ itself (with respect to the Euclidean space $(\mathbb{R}^2, d_{e})$) and $\ve$ is the spacing in $\hat{X}$.

%\newpage
\begin{example}\label{discreteIFSex1}  This first example is a very well known fractal, the Barnsley Fern or Black spleenwort fern, see \cite{peruggia1993discrete}. The approximation by the algorithm \texttt{IFSDraw($\mS$)} is presented in the Figure~\ref{Barnsley Fern}.  Consider $(\mathbb{R}^2, d_{e})$ a metric space, $X=[0,1]^2$ and the IFS $\phi_1,..., \phi_4: X \to X$ where
\[\mS:
\left\{
  \begin{array}{ll}
      \phi_1(x,y)   & = (0.856 x + 0.0414 y + 0.07, -0.0205 x+ 0.858 y + 0.147)\\
      \phi_2(x,y)   & = ( 0.244 x - 0.385 y + 0.393, 0.176 x + 0.224 y + 0.102)\\
      \phi_3(x,y)   & = ( -0.144 x + 0.39 y + 0.527, 0.181 x + 0.259 y - 0.014 ) \\
      \phi_4(x,y)   & =( 0.486, 0.031 x + 0.216 y + 0.05)\\
  \end{array}
\right.
\]
 \begin{table}[ht]
   \centering
   \begin{tabular}{c|c|c|c|c|c|c}
     \hline
     $\alpha$ & $[a,b]\times[c,d]$ & $D$ & $n$ & $N$ & $\ve$ & $\delta$ \\
     \hline
      $0.8680563766$ & $[0,1]\times[0,1]$ & $\sqrt{2}$ & $700$ & $24$ & $2.5\times 10^{-4}$ & $0.05686141346$ \\
     \hline
   \end{tabular}
   \caption{Resolution data for the uniform picture in Figure \ref{Barnsley Fern}.}\label{table:Barnsley Fern}
 \end{table}

 \begin{figure}[!ht]
  \centering
  \includegraphics[width=3.5cm,frame]{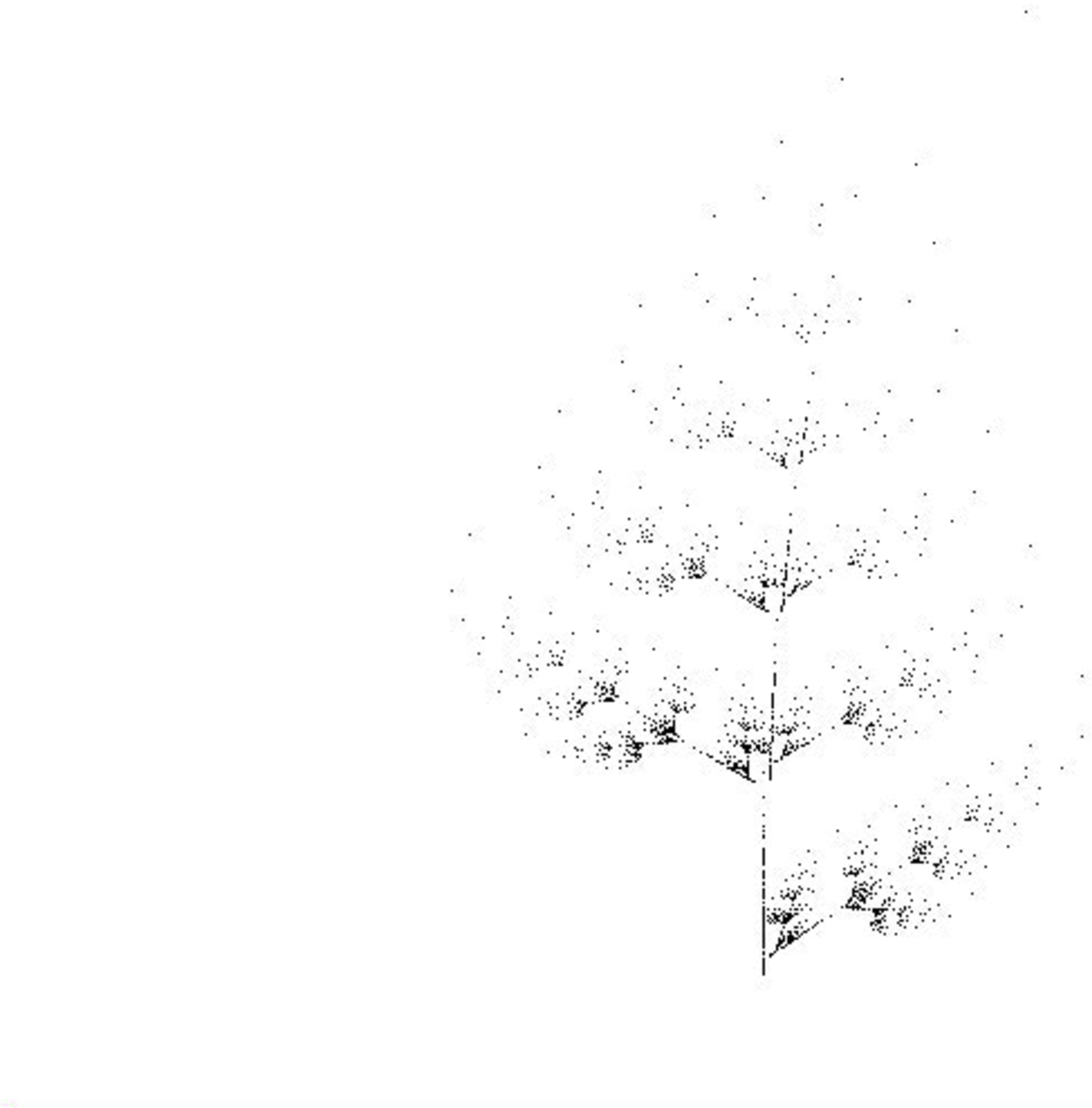}
  \includegraphics[width=3.5cm,frame]{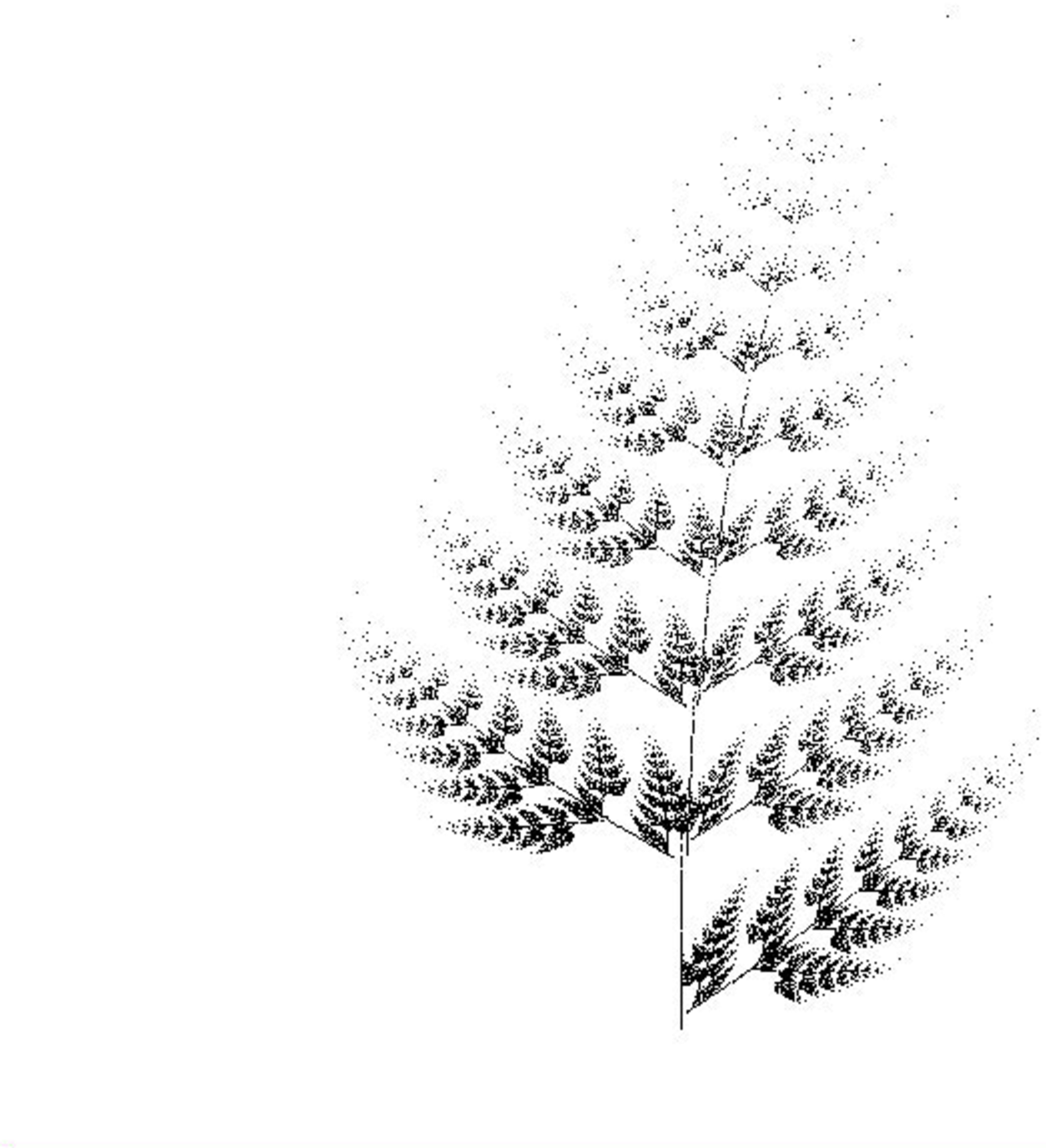}
  \includegraphics[width=3.5cm,frame]{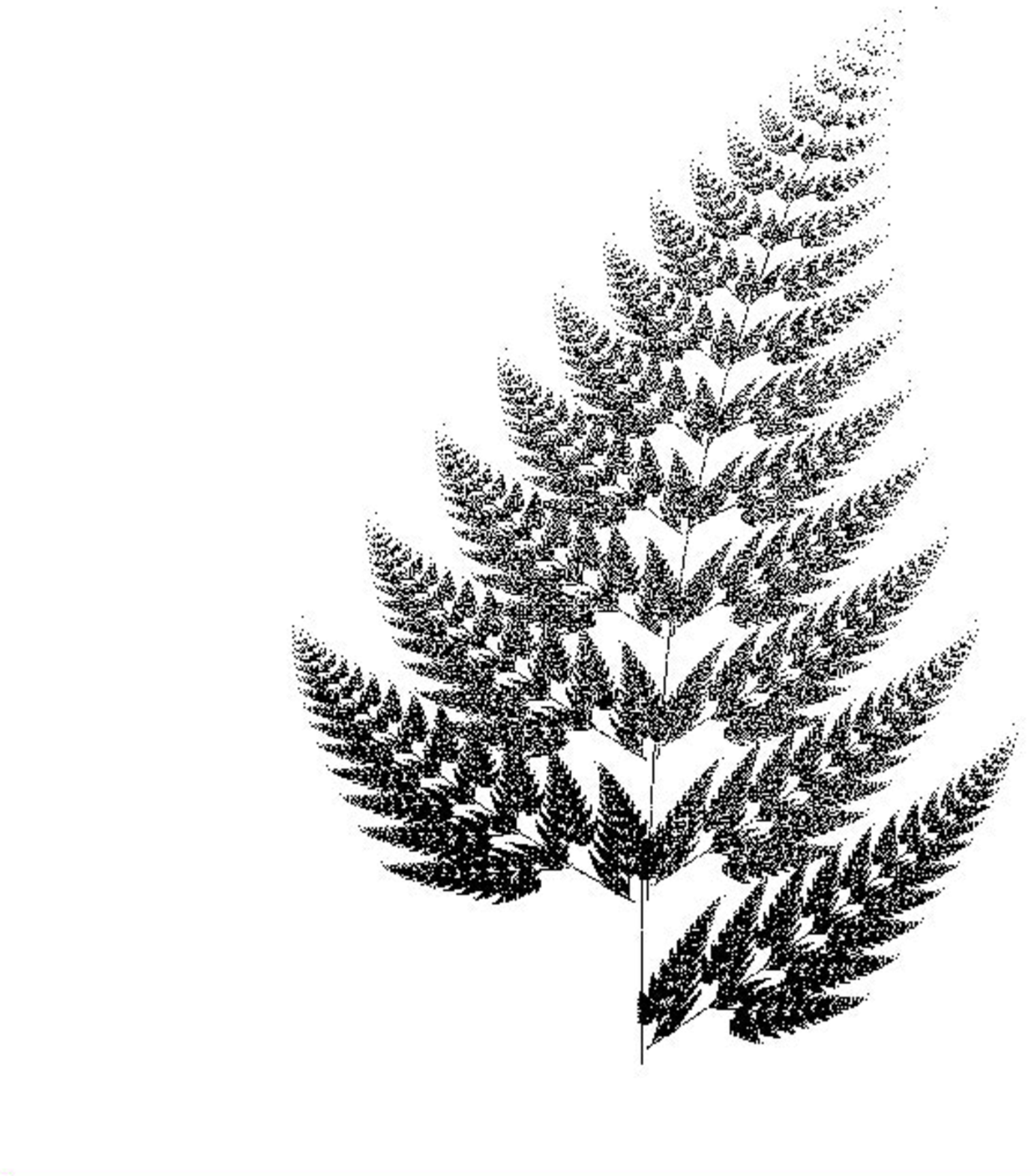}
  \includegraphics[width=3.5cm,frame]{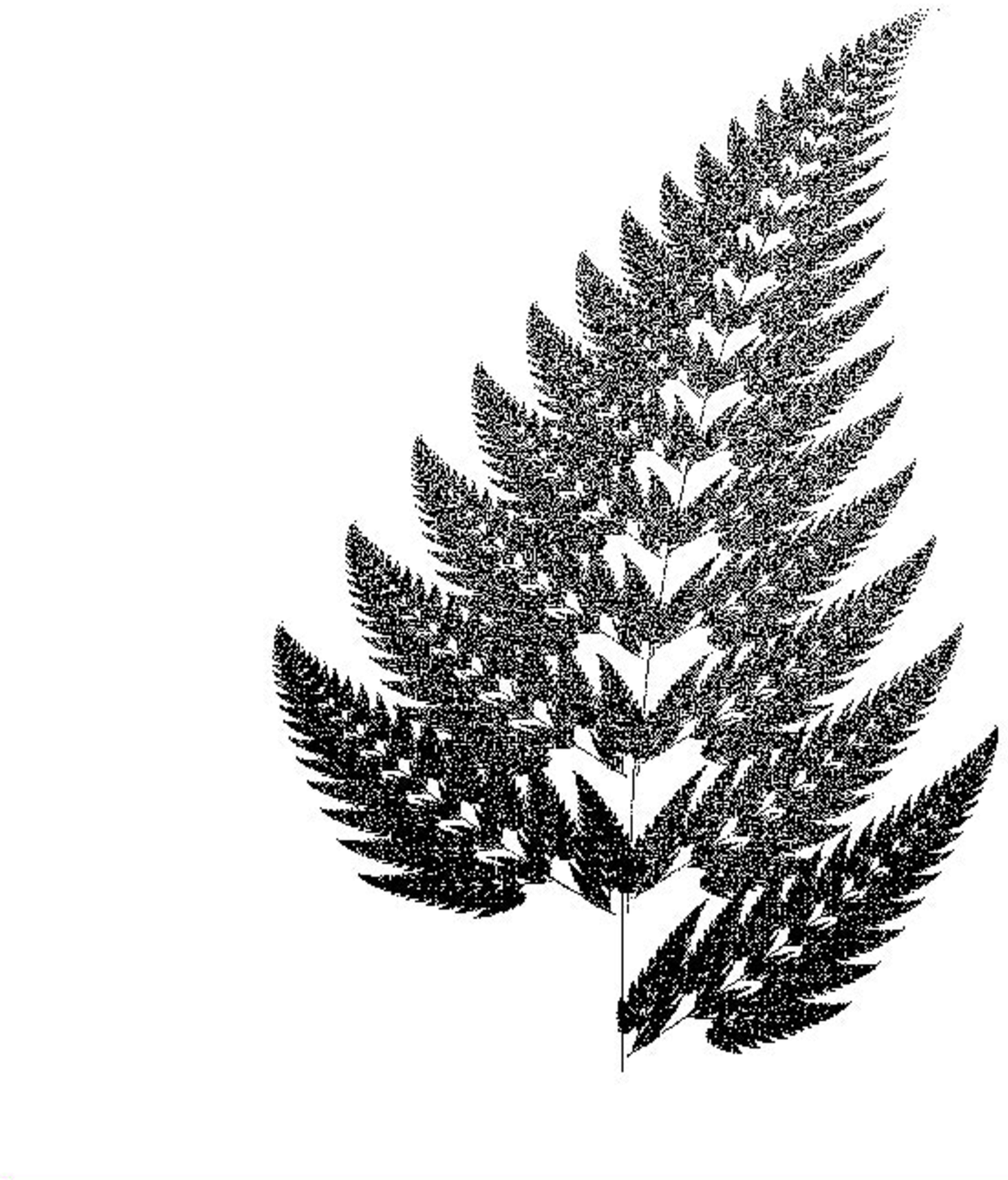}
  \caption{From the left to the right the output of the algorithm \texttt{IFSDraw($\mS$)} after  7, 11, 17 and 24  iterations.}\label{Barnsley Fern}
  \end{figure}

\end{example}

%\newpage

\begin{example}\label{discreteIFSex2} This second example is also a very known fractal, the Sierpinski Triangle, see \cite{peruggia1993discrete}. The approximation by the algorithm \texttt{IFSDraw($\mS$)} is presented in the Figure~\ref{Sierpinski Triangle}.
Consider $(\mathbb{R}^2, d_{e})$ a metric space, $X=[0,1]^2$ and the IFS $\phi_1, \phi_2: X \to X$ where
\[\mS:
\left\{
  \begin{array}{ll}
      \phi_1(x,y)   & =(0.5 x, 0.5 y )\\
      \phi_2(x,y)   & =(0.5 x +0.5, 0.5 y )\\
      \phi_3(x,y)   & =(0.5 x +0.25, 0.5 y + 0.5)\\
  \end{array}
\right.
\]
  \begin{table}[ht]
   \centering
   \begin{tabular}{c|c|c|c|c|c|c}
     \hline
     $\alpha$ & $[a,b]\times[c,d]$ & $D$ & $n$ & $N$ & $\ve$ & $\delta$ \\
     \hline
      $0.5$ & $[0,1]\times[0,1]$ & $\sqrt{2}$ & $700$ & $18$ & $2.5\times 10^{-4}$ & $0.002505394797$ \\
     \hline
   \end{tabular}
   \caption{Resolution data for the uniform picture in \ref{Sierpinski Triangle}.}\label{table:Sierpinski Triangle}
 \end{table}
 \begin{figure}[!ht]
  \centering
  \includegraphics[width=3.5cm,frame]{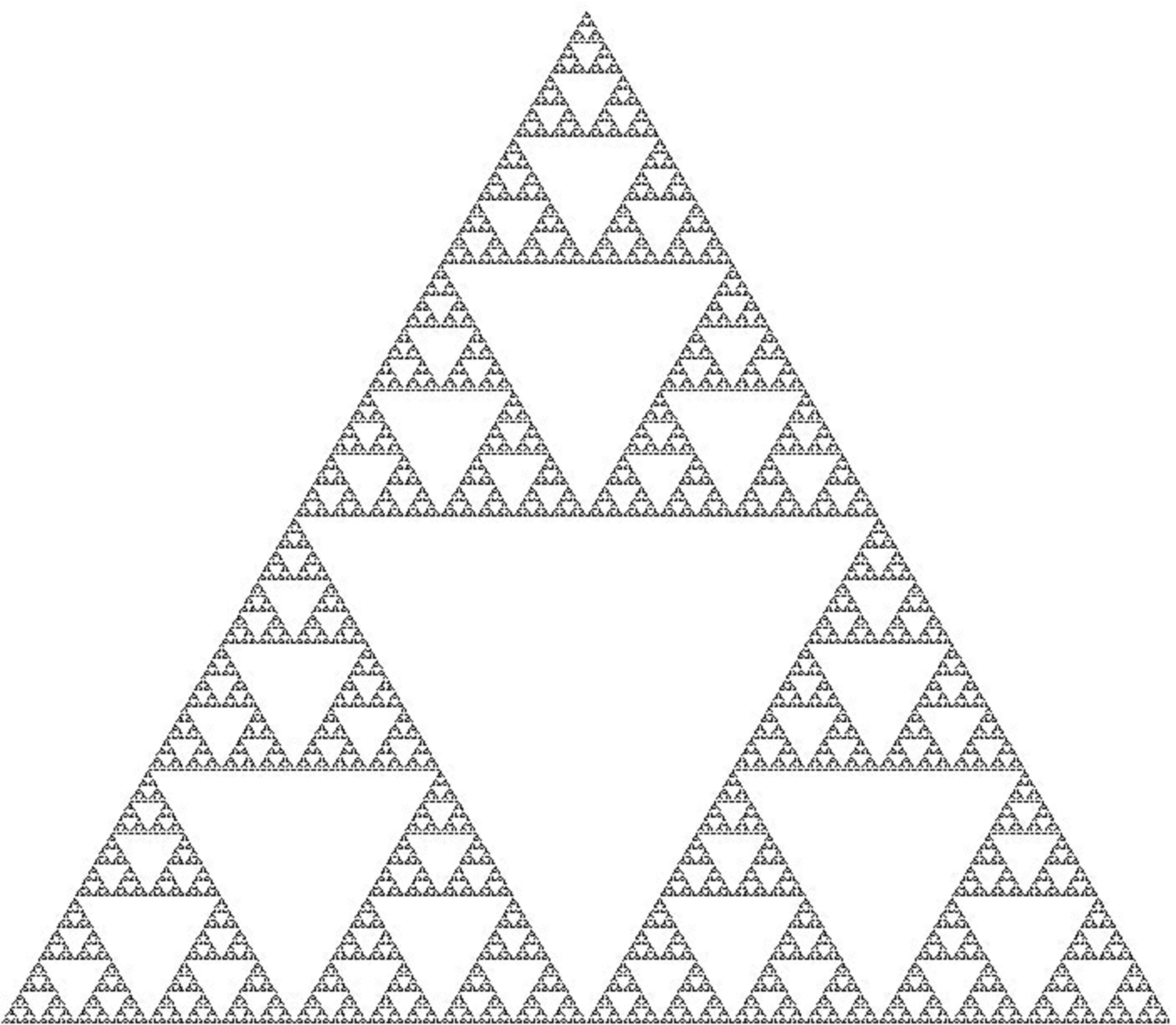}
  \includegraphics[width=3.5cm,frame]{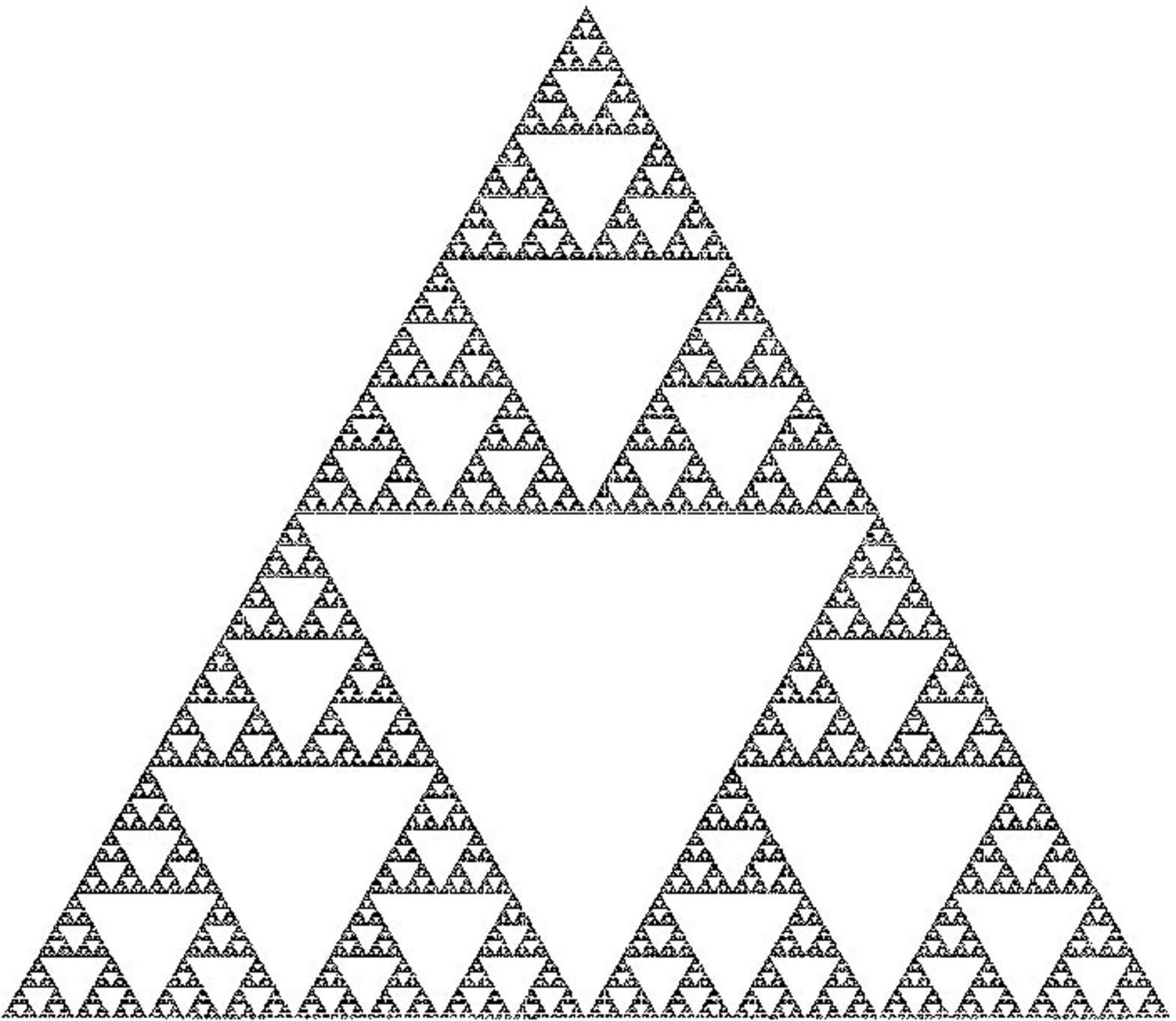}
  \includegraphics[width=3.5cm,frame]{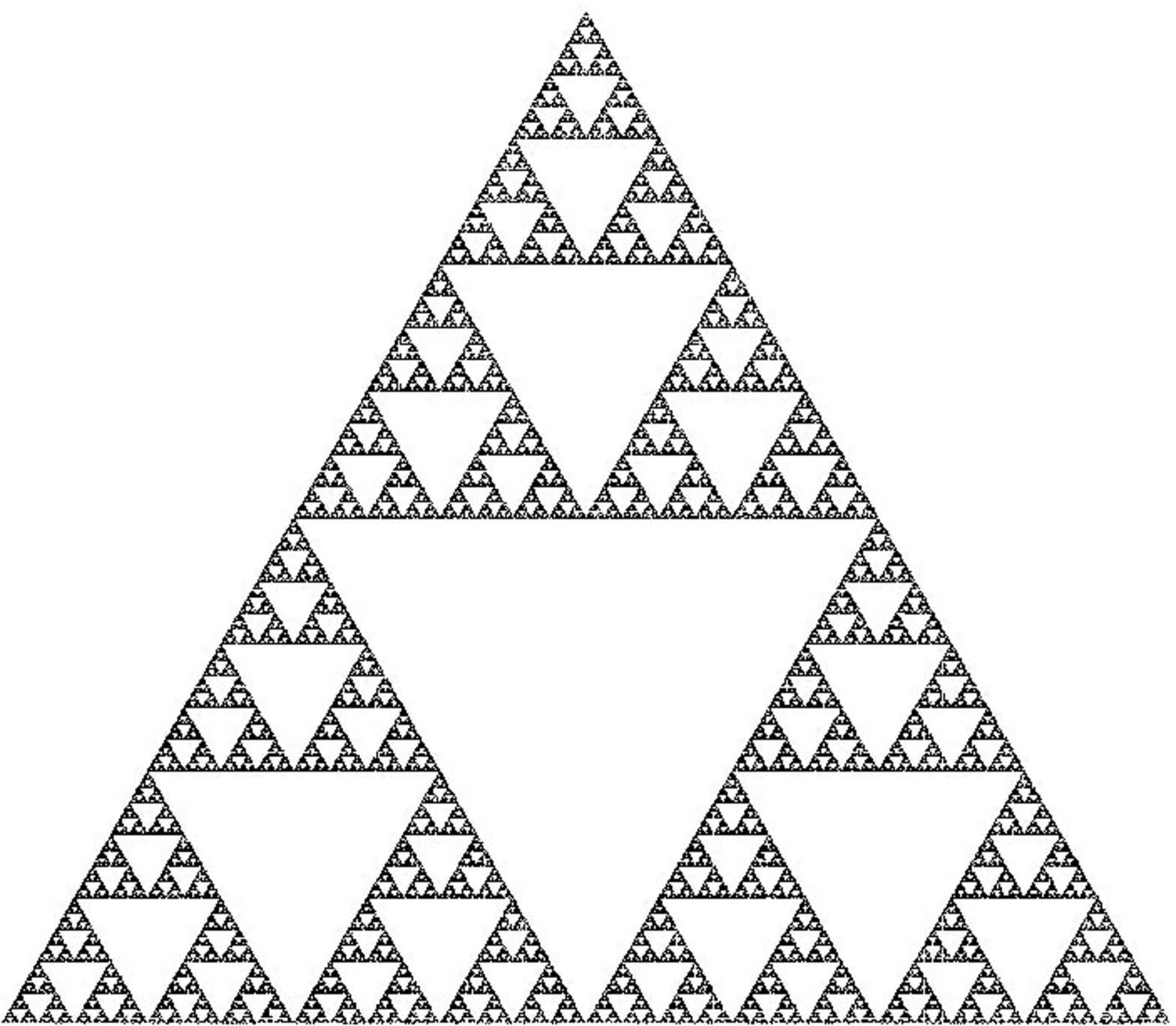}
  \includegraphics[width=3.5cm,frame]{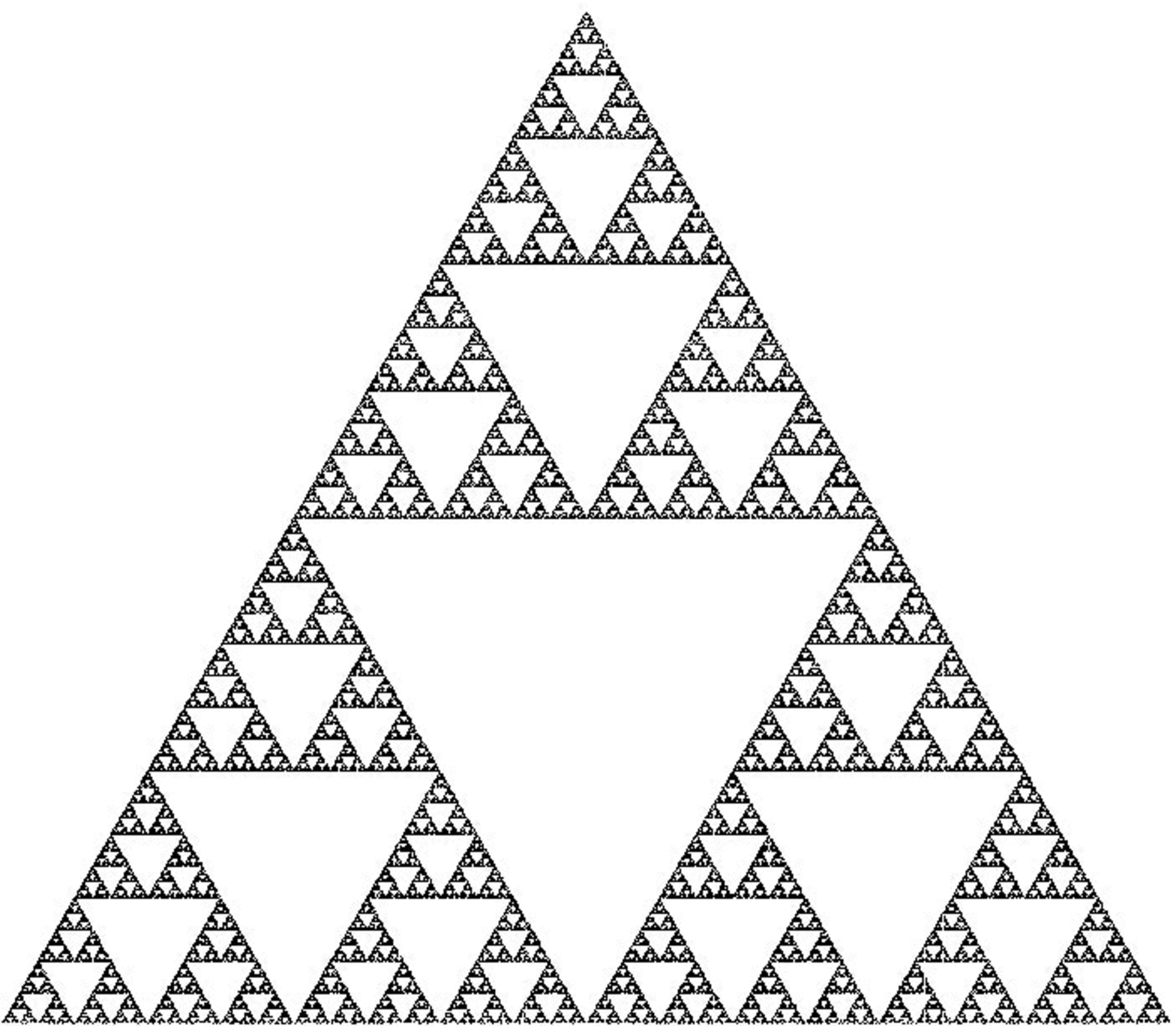}
  \caption{From the left to the right the output of the algorithm \texttt{IFSDraw($\mS$)} after 10, 12, 14 and 18 iterations. }\label{Sierpinski Triangle}
  \end{figure}

\end{example}

%\newpage

\begin{example}\label{discreteIFSex3} This last example is a classic fractal, the Maple Leaf, see \cite{peruggia1993discrete}. The approximation by the algorithm \texttt{IFSDraw($\mS$)} is presented in the Figure~\ref{Maple Leaf}. Consider $(\mathbb{R}^2, d_{e})$ a metric space, $X=[0,1]^2$ and the IFS $\phi_1,..., \phi_4: X \to X$ where
\begin{table}[!ht]
   \centering
   \begin{tabular}{c|c|c|c|c|c|c}
     \hline
     $\alpha$ & $[a,b]\times[c,d]$ & $D$ & $n$ & $N$ & $\ve$ & $\delta$ \\
     \hline
      $0.8$ & $[0,1]\times[0,1]$ & $\sqrt{2}$ & $700$ & $18$ & $2.5\times 10^{-4}$ & $0.03172620668$ \\
     \hline
   \end{tabular}
   \caption{Resolution data for the uniform picture in \ref{Maple Leaf}.}\label{table:Maple Leaf}
 \end{table}
\[\mS:
\left\{
  \begin{array}{ll}
      \phi_1(x,y)   & = (0.8 x + 0.1, 0.8 y + 0.04)  \\
      \phi_2(x,y)   & = (0.5 x + 0.25, 0.5 y + 0.4)\\
      \phi_3(x,y)   & = ( 0.355 x - 0.355 y +0.266,  0.355 x + 0.355 y + 0.078)  \\
      \phi_4(x,y)   & = ( 0.355 x + 0.355 y +0.378,  -0.355 x + 0.355 y + 0.434)  \\
  \end{array}
\right.
\]
 \begin{figure}[!ht]
  \centering
  \includegraphics[width=3.5cm,frame]{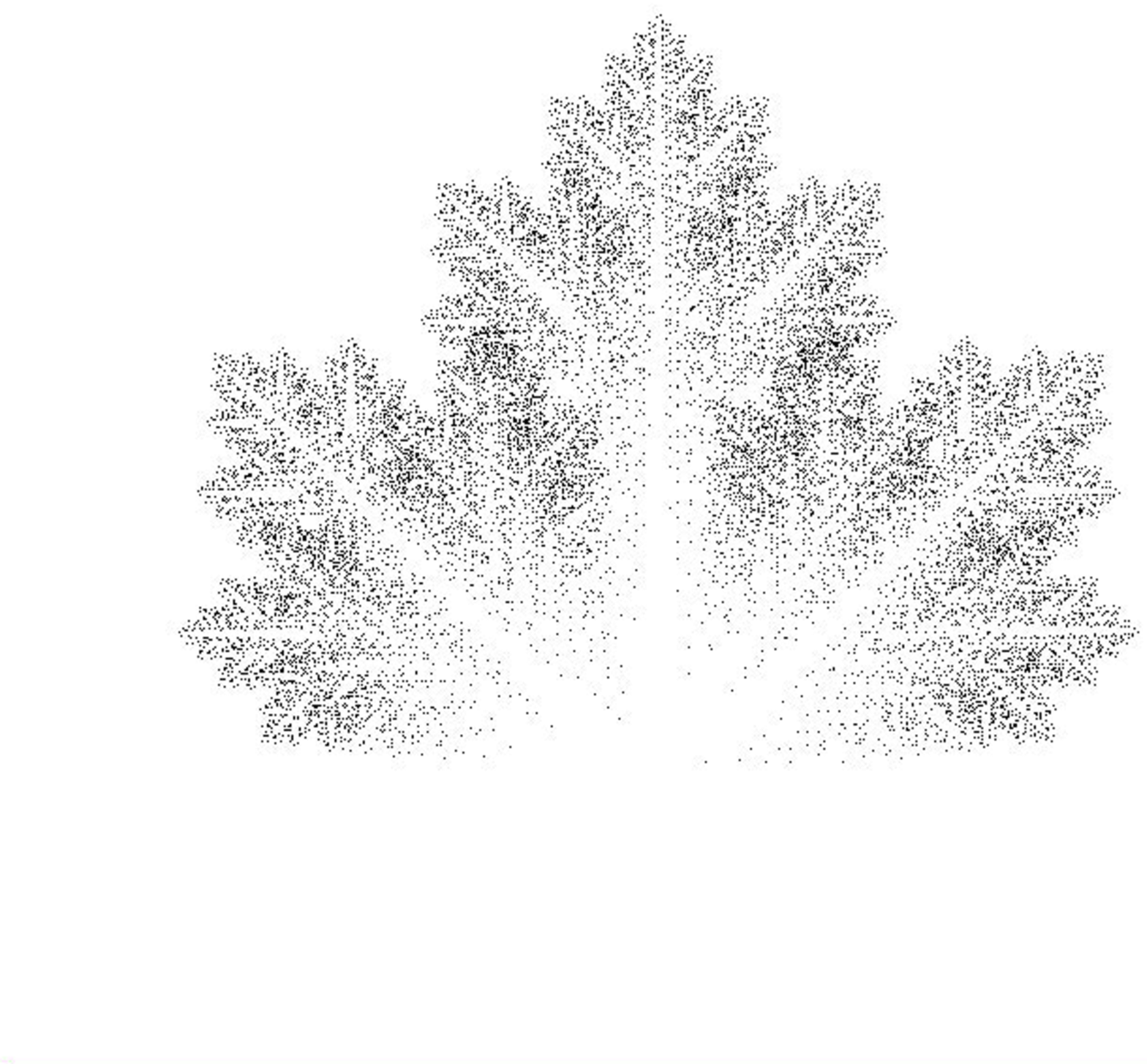}
  \includegraphics[width=3.5cm,frame]{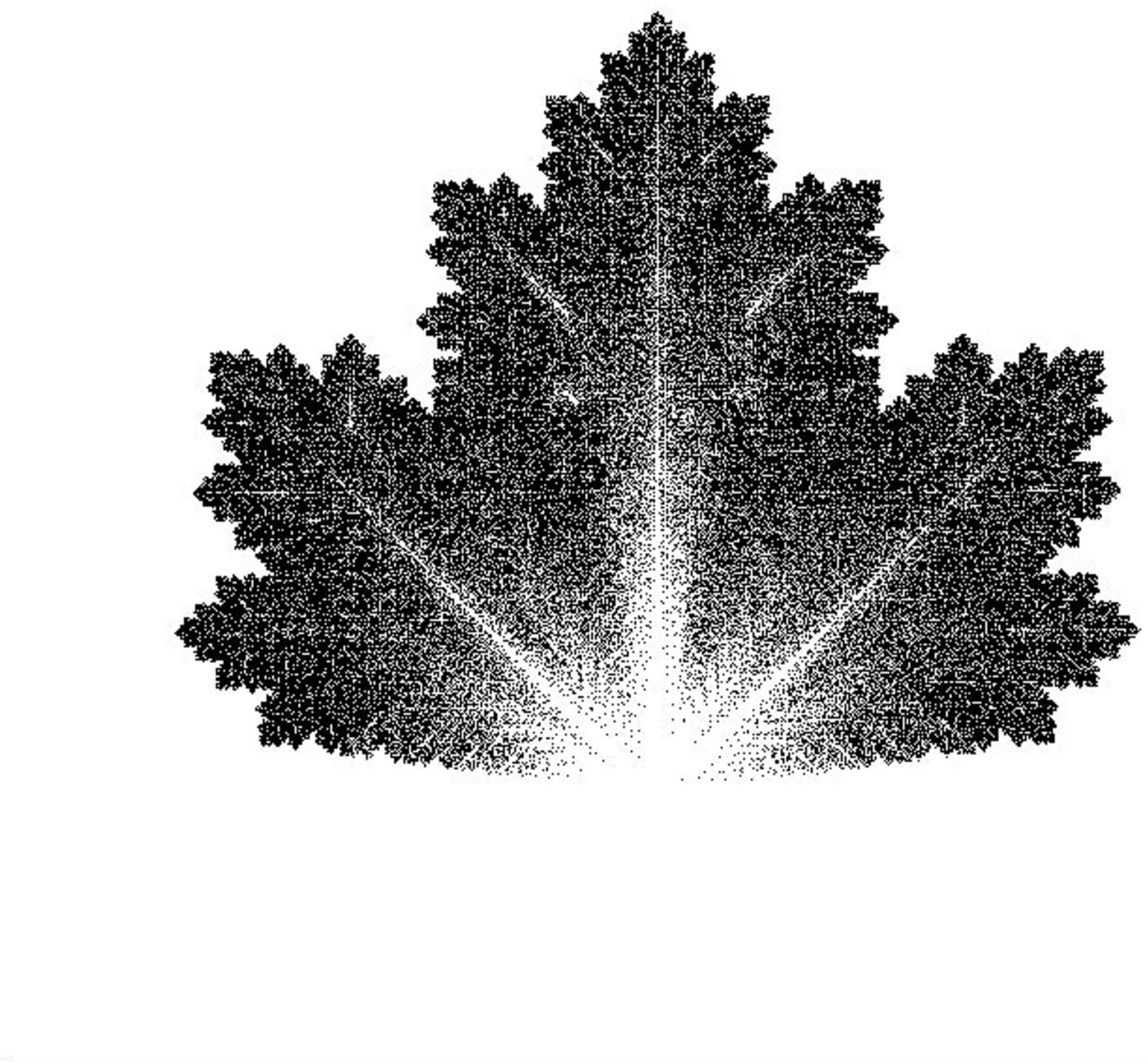}
  \includegraphics[width=3.5cm,frame]{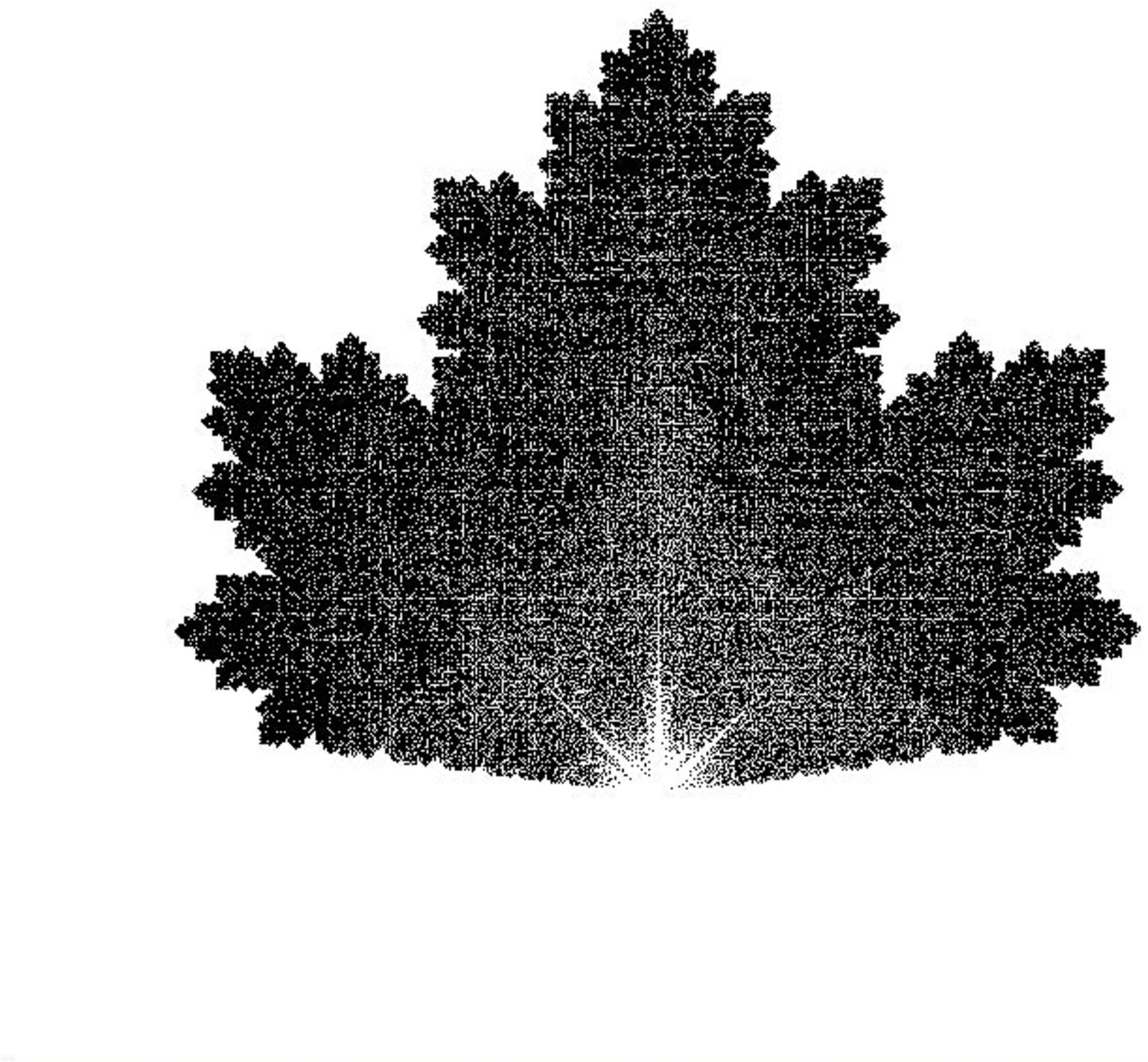}
  \includegraphics[width=3.5cm,frame]{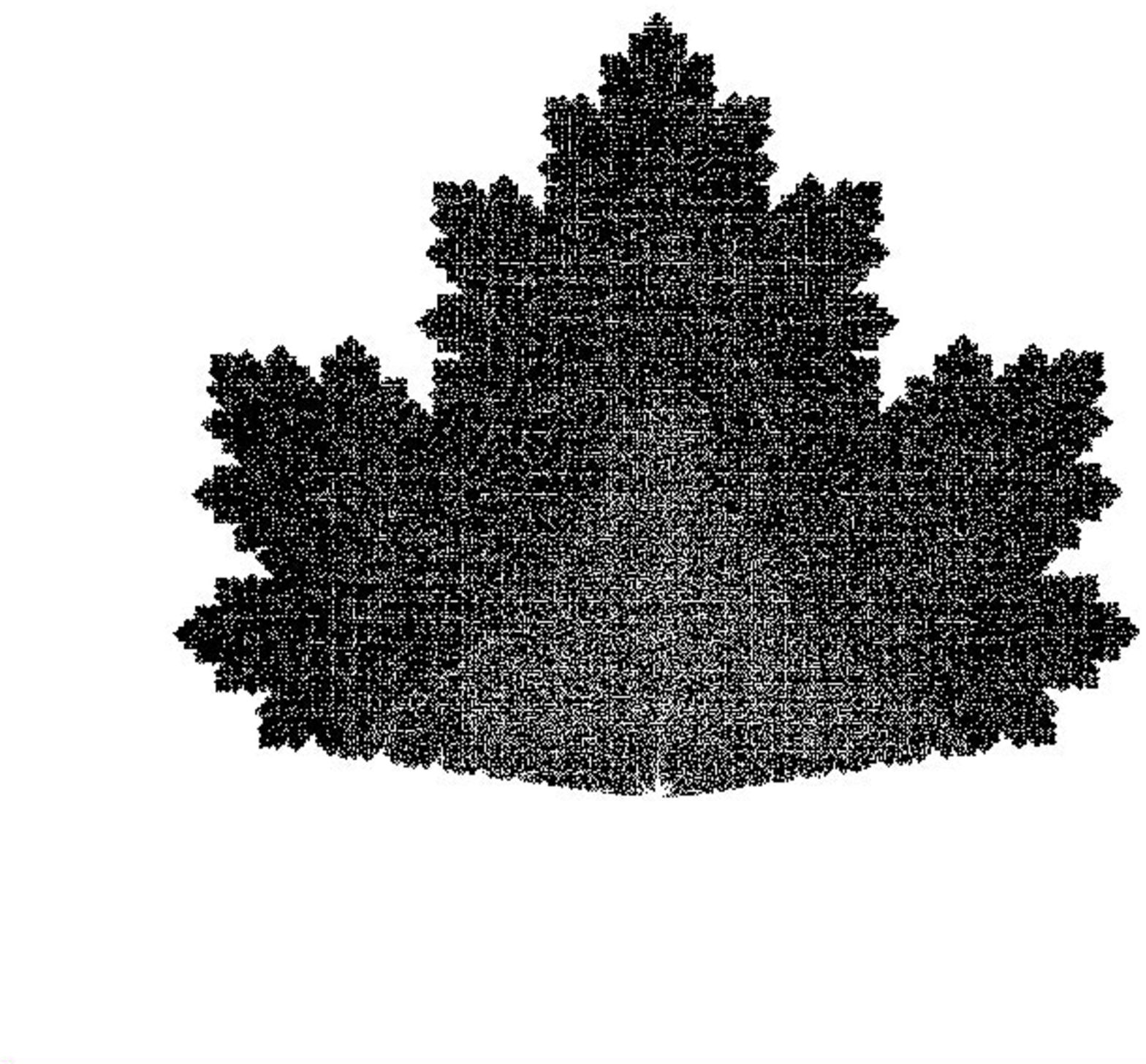}
  \caption{From the left to the right the output of the algorithm \texttt{IFSDraw($\mS$)} after   8, 11, 14 and 18 iterations. }\label{Maple Leaf}
\end{figure}
\end{example}

%\newpage
\section{Discrete version of the deterministic algorithm for fuzzy IFS}\label{sec:FuzzyIFSDraw}

\subsection{Discrete deterministic algorithm for fuzzy IFS} Let $\mathcal{S}=(X,(\phi_j)_{j=1}^L,(\rho_j)_{j=1}^L)$ be a fuzzy IFS satisfying the conditions of Theorem~\ref{tt3}. Then we can obtain a fractal with resolution $\delta$, approximating $u_{\mS}$, as follows:

{\tt
\begin{tabbing}\label{Determ Discrete fuzzy IFS algorithm}
aaa\=aaa\=aaa\=aaa\=aaa\=aaa\=aaa\=aaa\= \kill
     \> \texttt{FuzzyIFSDraw($\mS$)}\\
     \> {\bf input}: \\
     \> \>  \> $\delta$, the resolution.\\
     \> \>  \> $u \in{\F}^*_{\hat{X}}$ a discrete fuzzy set. \\
     \> \>  \> The diameter $D$ of a ball in $({\F}^*_{X},d_{\infty})$ containing $\on{supp}(u_{\mS})\mbox{ and }\on{supp}(u)$.\\
     \> {\bf output}: A grey scale  image representing a fractal $w$
     with resolution  at most $\delta$.\\
     \> {\bf Compute} $\displaystyle \alpha_\mS:={\rm Lip}(\mS)$\\
     \> {\bf Compute} $\varepsilon >0$ and $N \in \mathbb{N}$ such that $\frac{5\ve}{1-\alpha_\mS}+\alpha_\mS^N \, D < \delta$\\
     \>w:=u \\
     \> {\bf for n from 1 to N do}\\
     \> \> w:=$\mZ_{\hat{\mS}}$(w) \\
     \> {\bf end do}\\
     \> {\bf return: Print}(w).
\end{tabbing}}

Following the same reasoning as in Remark~\ref{resolution choices} we can obtain, from Theorem~\ref{tt3}, the appropriate estimates in order to have $\frac{5\ve}{1-\alpha_\mS}+\alpha_\mS^N \, D < \delta$. However the actual computation of $\mZ_{\hat{\mS}}(u)$ requires some additional technical details.

\subsection{Generating the inverse image set}%$\Phi^{-1}$
In the algorithm, \texttt{FuzzyIFSDraw($\mS$)} we have a computational challenge that is to perform the extension principle in a proper $\varepsilon$-net $\hat{X}$, that is, for a given $u \in{\F}^*_{\hat{X}}$ to compute the value
%$$\hat{\phi}(u)(x)=\max_{[y] \in \hat{X},\; \hat{\phi}([y])=[x]}u([y])$$
$\hat{\phi}(u)(x)=\max_{y \in \hat{X},\; \hat{\phi}(y)=x}u(y)$
for any $x\in \hat{X}$.  To operationalize this task in the actual algorithm we make an initial process to produce the set
%$$\Phi^{-1}:=\{(j, [x], [y]) \; | \; \hat{\phi}_{j}([y])=[x], \; 1\leq j \leq L\}\subset \{1,...,L\}\times\hat{X}^{2},$$
$\Phi^{-1}:=\{(j, x, y) \; | \; \hat{\phi}_{j}(y)=x, \; 1\leq j \leq L\}\subset \{1,...,L\}\times\hat{X}^{2},$
which essentially contains all the possible pre images of the discrete IFS in each point. Later, when we are iterating the main loop on the algorithms \texttt{FuzzyIFSDraw($\mS$)} we just access this arrangement to obtain $\mZ_{\hat{\mS}}(u)$.

\begin{figure}[!ht]
 \centering
 \includegraphics[width=4cm,frame]{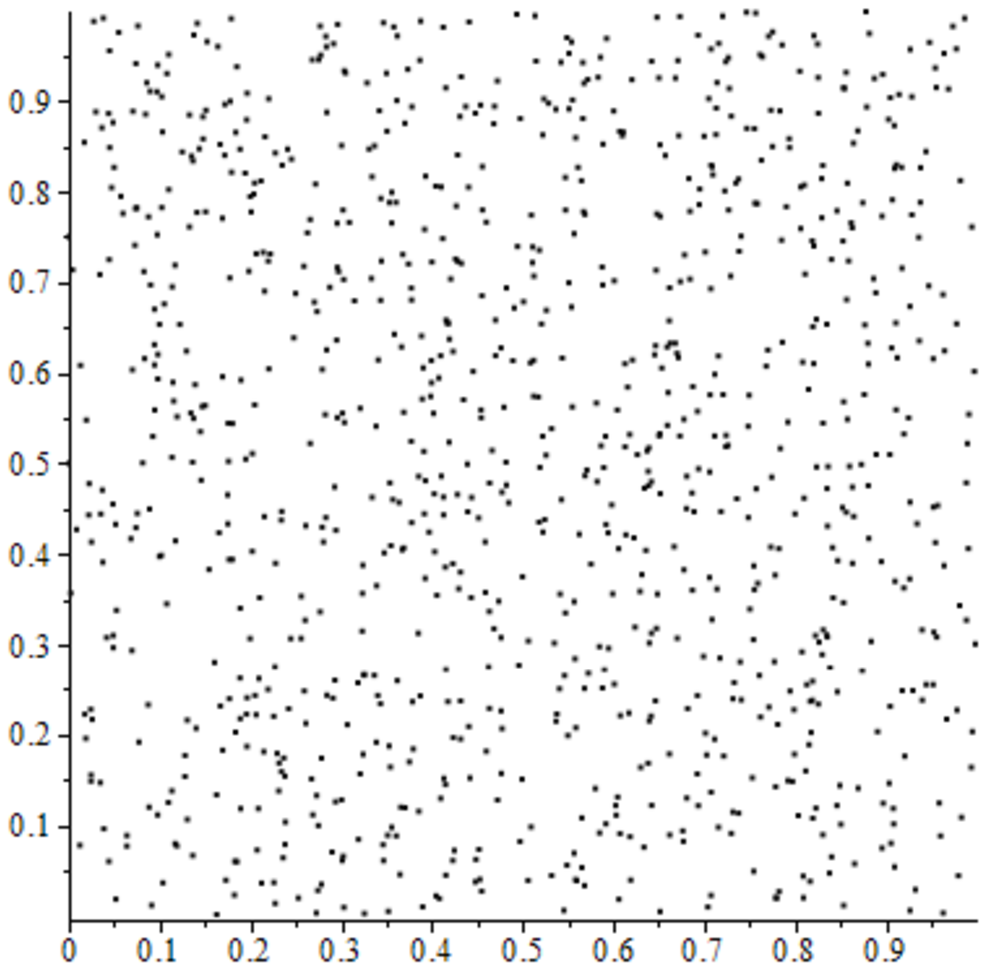}\\
 \caption{$\varepsilon$-net $\hat{X}$  given by an aleatory sequence for $X=[0,1]^2$, with $na=1000$}\label{aleatory1000}
\end{figure}

At this stage we also introduce another way of defining nets on $X$. The idea is to use appropriate chaos game algorithm. Assume $X=[0,1]^d$ (or $X$ is any appropriately regular closed and bounded subset of $\R^d$), $n\in\N$ and $\hat{X}$ be as earlier. For any $t=(t_1,...,t_d)\in \hat{X}$, define $g_t(z):=t$ for each $z\in X$. Then $({X},(g_t)_{t\in\hat{X}})$ is the IFS consisting of Banach contractions on $X$ and $\hat{X}$ is its attractor. Now we let a net $\zeta$ to be an aleatory sequence generated by a Chaos Game procedure:
 $$\zeta:=\{  \zeta_{i}  \in X\; |\; \zeta_{i+1} =g_{j_{i}} (\zeta_{i}), i\geq 0 \; \text{ and } \zeta_0 \text{ is given point}\}$$
 where each $j_i$ is chosen randomly. It is well known (see \cite{BOOK:9592}) that with a probability one, the sequence $\zeta$ is dense in the attractor of the underlying IFS, which in our case means that with a probability one, $\zeta=\hat{X}$. In reality, we stop after $an$ steps, obtaining a subset $\zeta$ of $\hat{X}$ consisting of $an$ points which, with a relatively high probability, is spread uniformly on $\hat{X}$. Then we can define the projection $r:X\to \zeta$ so that
$$
||r(z)-z||=\min\{||x-z||:x\in\zeta\}
$$
Clearly, there is no warranty that $\zeta$ is $\ve$-net for appropriately small $\ve$. However, we will see that such an approach gives satisfactory results, in some cases better than the uniform one.

As we remarked the fundamental operation to compute  $\hat{\phi}_{j}$ is the generation of the set $\Phi^{-1}$. We are going to use two approaches, the \textbf{uniform} one where we run on each point of a uniform $\varepsilon$-net and the \textbf{aleatory} one where we introduce an auxiliary chaos game sequence which also produces an $\varepsilon$-net.

Consider $\mathcal{S}=(X, (\phi_j)_{j=1}^{L}, (\rho_{j})_{j=1}^{L})$ an IFZS and the following algorithm to generate $\Phi^{-1}$:
{\tt
\begin{tabbing}
aaa\=aaa\=aaa\=aaa\=aaa\=aaa\=aaa\=aaa\= \kill
     \> \texttt{Generate($\Phi^{-1}$)}\\
     \> {\bf input}: \\
     \> \>   {An} empty list $\Phi^{-1}$.\\
     \> \>   {A} net  $Y$.\\
     \> \>   {A} projection $r$.\\
     \> {\bf output}: $\Phi^{-1}$\\
     \> {\bf for j from 1 to L do}\\
     \> \> {\bf for $x\in Y$ do}\\
     \> \> \> {\bf $z:=r(\phi_{j}(x))$.}\\
     \> \> \> {\bf $\Phi^{-1}:=\Phi^{-1} \cup (j,x,z)$.}\\
     \> \> {\bf end do}\\
     \> {\bf end do}\\
     \>{\bf return:} $\Phi^{-1}$.
\end{tabbing}}

\subsection{Fuzzy IFS examples}\label{Fuzzy IFS examples}

\subsubsection{Examples with dimension one}

\begin{example}\label{example 1_1IFZS}
  This first example is an exact reproduction of an IFZS from \cite{Cabrelli-1992}, Example 1.  In the following we show the picture drawn by \cite{Cabrelli-1992} and the figure obtained by our algorithm with $n=200$ and $na=1000000$. The resemblance is remarkable.
 Consider $(\mathbb{R}, d_{e})$ a metric space, $X=[0,1]$ and a fuzzy IFS $\mS : \phi_1,..., \phi_4: X \to X$ with grey level functions $\rho_1,..., \rho_4: [0,1] \to [0,1]$ where
$\phi_i(x)=\frac{x}{4}+\frac{i-1}{4}, \; i=1,..., 4$
and
\[\rho_1(t):=
\left\{
  \begin{array}{ll}
      0.25\,t   & 0\leq t < 0.25  \\
      t-0.18 & 0.25 \leq  t \leq 1
  \end{array}
\right.,
\] $\rho_2(t):=t$, $\rho_3(t):=0.33\, t$ and $\rho_4(t):=\sin(t)$.
\begin{table}[ht]
   \centering
   \begin{tabular}{c|c|c|c|c|c|c}
     \hline
     $\alpha$ & $[a,b]\times[c,d]$ & $D$ & $n$ & $N$ & $\ve$ & $\delta$ \\
     \hline
      $0.25$ & $[0,1]\times[0,1]$ & $1$ & $200$ & $9$ & $10^{-6}$ & $0.00001048136393$ \\
     \hline
   \end{tabular}
   \caption{Resolution data for the uniform picture in \ref{example1cabrelliIFZS}.}\label{table:example1cabrelliIFZS}
 \end{table}
 \begin{figure}[!ht]
  \centering
  \includegraphics[width=3cm]{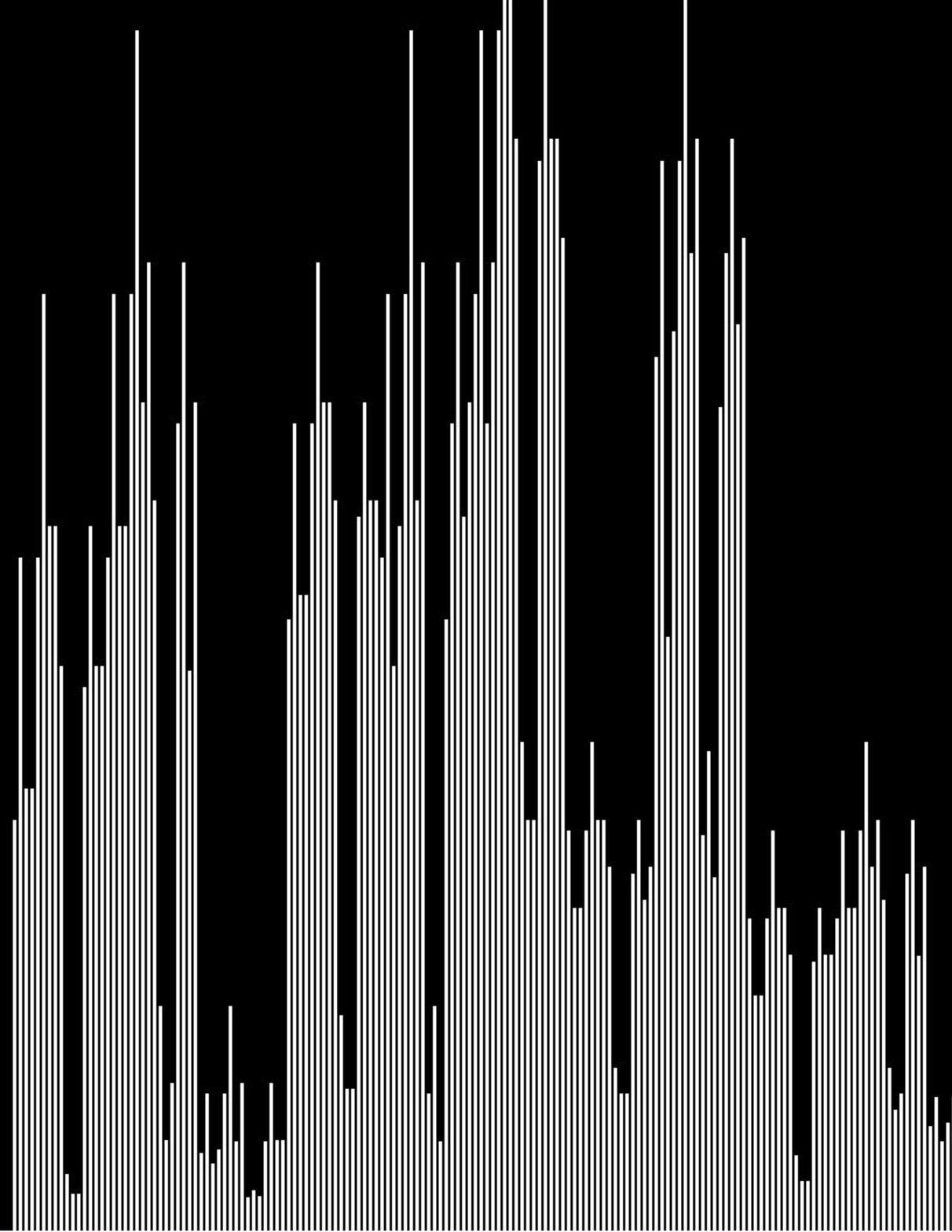}
  \includegraphics[width=3cm]{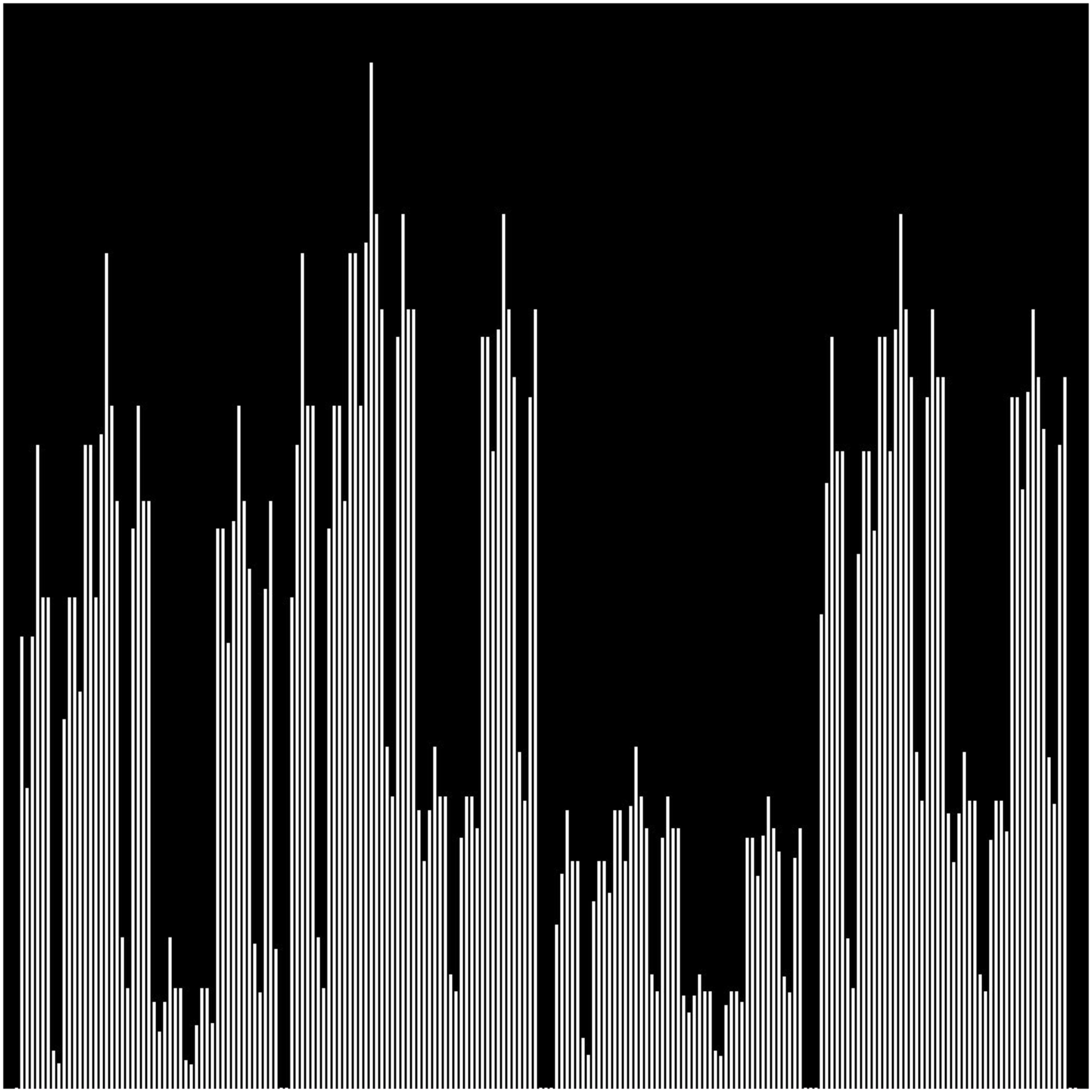}
  \includegraphics[width=3cm]{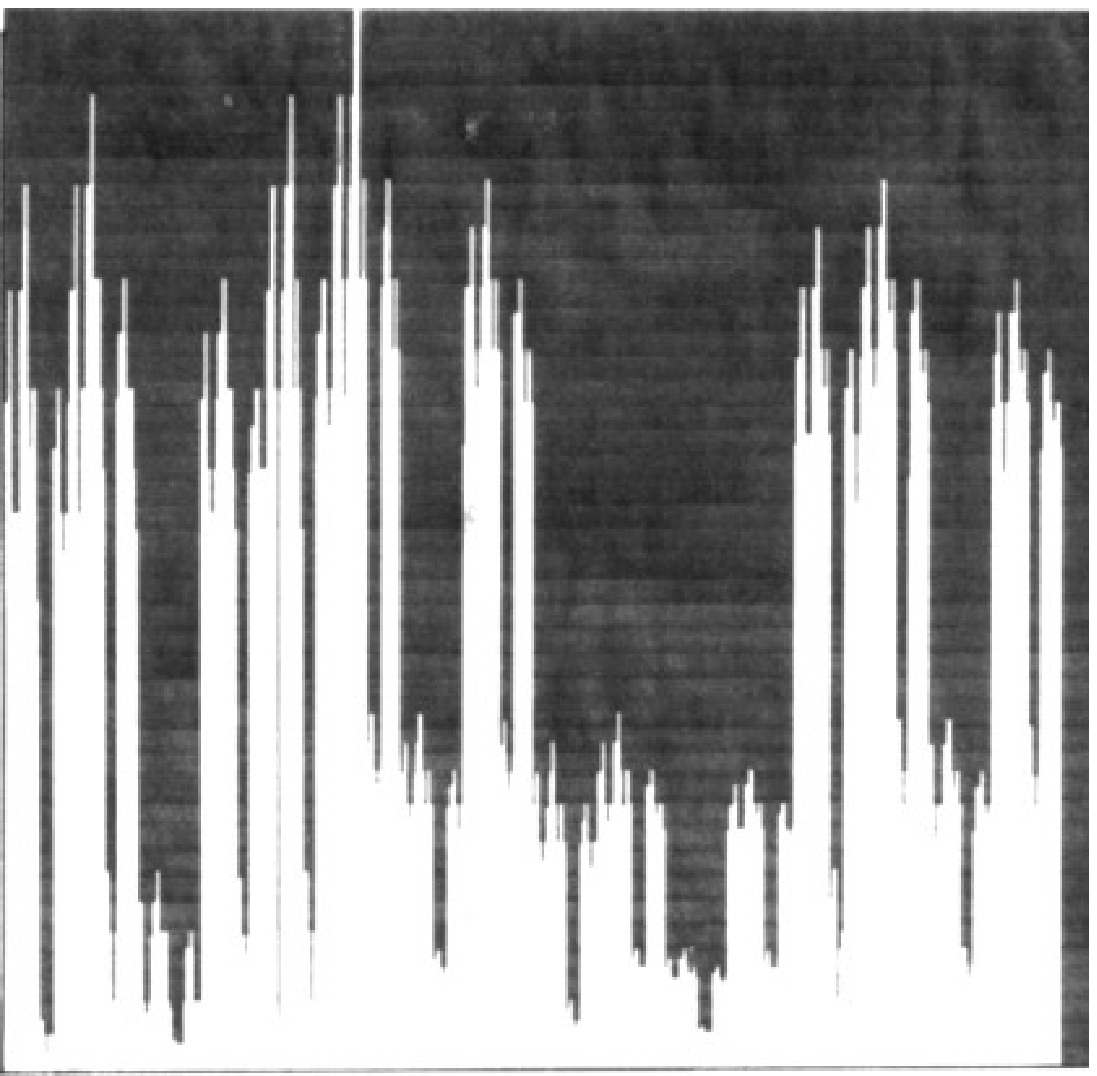}\\
  \caption{From the left to the right, the picture  obtained by the algorithm \texttt{FuzzyIFSDraw($\mS$)} with uniform generation $n=250$, with aleatory generation $n=200$ and $na=1000000$ and the figure drawn in \cite{Cabrelli-1992}.}\label{example1cabrelliIFZS}
\end{figure}

\end{example}

%\newpage

\subsubsection{Examples with dimension 2}

\begin{example}\label{example 1 IFZS}
  Consider $(\mathbb{R}^2, d_{e})$ a metric space, $X=[0,1]^2$ and  a fuzzy IFZS $\phi_1,..., \phi_4: X \to X$ with grey level functions $\rho_1,..., \rho_4: [0,1] \to [0,1]$, where
\[\mS: \left\{
  \begin{array}{ll}
    \phi_1(x_1,y_1)&=(0.5\,x_1, 0.5\,y_1) \\
    \phi_2(x_1,y_1)&=(0.5\,x_1+0.5, 0.5\,y_1) \\
    \phi_3(x_1,y_1)&=(0.5\,x_1, 0.5\,y_1+0.5) \\
    \phi_4(x_1,y_1)&=(0.5\,x_1+0.5, 0.5\,y_1+0.5)
  \end{array}
\right.
\]
and
\[\rho_1(t):=
\left\{
  \begin{array}{ll}
      0    & 0\leq t < 0.2505  \\
      0.25  & 0.2505 \leq t < 0.505  \\
      0.5   & 0.505\leq t < 0.7505  \\
      0.75 & 0.7505 \leq  t \leq 1
  \end{array}
\right.,
\] $\rho_2(t):=t$, $\rho_3(t):= t$ and $\rho_4(t):=t$.\\
\begin{table}[ht]
   \centering
   \begin{tabular}{c|c|c|c|c|c|c}
     \hline
     $\alpha$ & $[a,b]\times[c,d]$ & $D$ & $n$ & $N$ & $\ve$ & $\delta$ \\
     \hline
      $0.5$ & $[0,1]\times[0,1]$ & $\sqrt{2}$ & $200$ & $11$ & $10^{-6}$ & $0.0007005339658$ \\
     \hline
   \end{tabular}
   \caption{Resolution data for the uniform picture in \ref{example2cabrelli_uniform_aleatoryIFZS}.}\label{table:example2cabrelli_uniform_aleatoryIFZS}
 \end{table}

The approximation of the fuzzy attractor, via uniform and aleatory algorithm, is given in the picture Figure~\ref{example2cabrelli_uniform_aleatoryIFZS}.
  \begin{figure}[!ht]
  \centering
  \includegraphics[width=3cm]{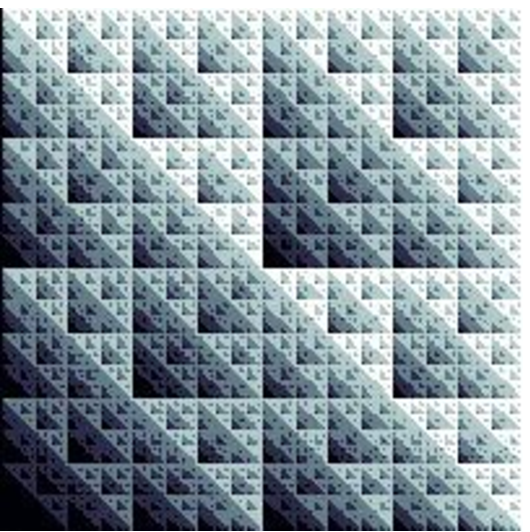}
  \includegraphics[width=3cm]{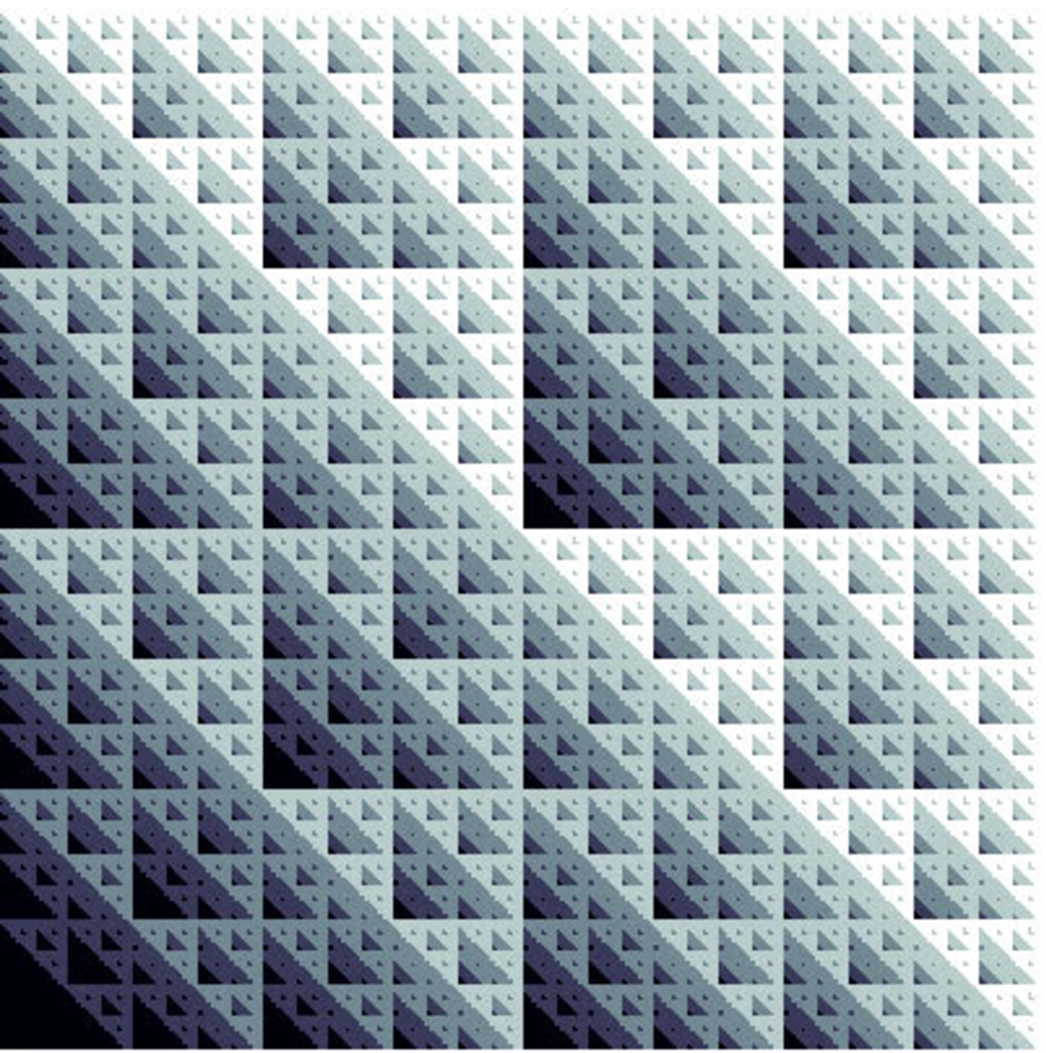}
 \includegraphics[width=3.1cm]{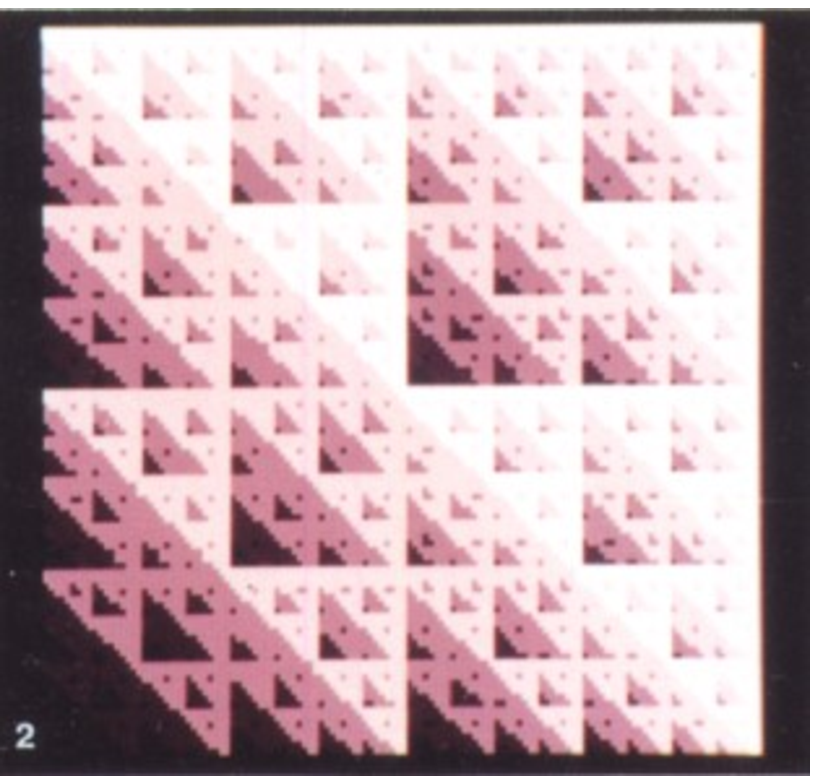}\\
  \caption{From the left to the right the picture obtained by the algorithm \texttt{FuzzyIFSDraw($\mS$)}, with aleatory generation $n=200$ and $11$ iterations, with aleatory generation, with $n=400$, $na=40000000$ and the picture drawn in \cite{Cabrelli-1992}.}\label{example2cabrelli_uniform_aleatoryIFZS}
\end{figure}

The images shown in Figure~\ref{example2cabrelli_uniform_aleatoryIFZS}, left, were produced using the program that implements the uniform algorithm, using the external file version and the specific data regarding its performance are given in Table~\ref{tab:example_1_runs}, where we show the number of iterations, $k$; the time taken to produce the $\Phi^{-1}$ sets, $t_S$; the time taken for the iterations, $t_i$; and the overall execution time, $t$. All times reported are wall-clock times and are given in seconds.
\begin{table}[ht]
\centering
\begin{tabular}{ccccc}
\hline
$n$&$k$&$t_{\Phi^{-1}}$&$t_i$&$t$\\
\hline
$50$&$8$&$57.09$&$777.92$&$835.01$\\
$100$&$10$&$850.33$&$5231.4$&$6081.73$\\
$200$&$10$&$14221.15$&$83506.20$&$97727.35$\\
\hline\\
\end{tabular}
\caption{Number of iterations and execution times $t_S$, $t_i$ and $t$ using the program that implements the deterministic algorithm with the external file version.}\label{tab:example_1_runs}
\end{table}

\end{example}

%\newpage

%%%%%%%%%%%%%%%%%%%%%%%exemplo8%%%%%%%%%%%%%%%%%%%%%%%%%%%%%%%%%

\begin{example}\label{example 8}
  Consider $(\mathbb{R}^2, d_{e})$ a metric space, $X=[0,1]^2$ and a fuzzy IFS $\mS: \phi_1,..., \phi_4: X \to X$, where $X=[0,1]$ and
\[\mS : \left\{
  \begin{array}{ll}
    \phi_1(x_1,y_1)&=(0.5\,x_1, 0.5\,y_1) \\
    \phi_2(x_1,y_1)&=(0.5\,y_1+0.5, -0.5\,x_1+0.5) \\
    \phi_3(x_1,y_1)&=(-0.5\,y_1+0.5, 0.5\,x_1+0.5) \\
    \phi_4(x_1,y_1)&=(0.5\,x_1+0.5, 0.5\,y_1+0.5)
  \end{array}
\right.
\]
and $\{\rho_1,...,\rho_4\}$ are the same grey level functions used in the  Example~\ref{example 1 IFZS}.
\begin{table}[ht]
   \centering
   \begin{tabular}{c|c|c|c|c|c|c}
     \hline
     $\alpha$ & $[a,b]\times[c,d]$ & $D$ & $n$ & $N$ & $\ve$ & $\delta$ \\
     \hline
      $0.5$ & $[0,1]\times[0,1]$ & $\sqrt{2}$ & $100$ & $11$ & $10^{-6}$ & $0.0007005339658$ \\
     \hline
   \end{tabular}
   \caption{Resolution data for the uniform picture in \ref{example8_uniform and chaos}.}\label{table:example8_uniform and chaos}
 \end{table}

The approximations of the fuzzy attractor, via uniform and aleatory algorithm, are given in Figure~\ref{example8_uniform and chaos}.

\begin{figure}[!ht]
 \centering
  \includegraphics[width=3cm]{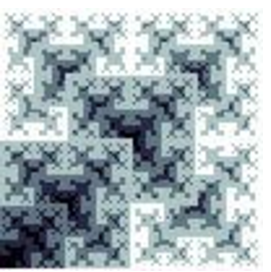} \includegraphics[width=3cm]{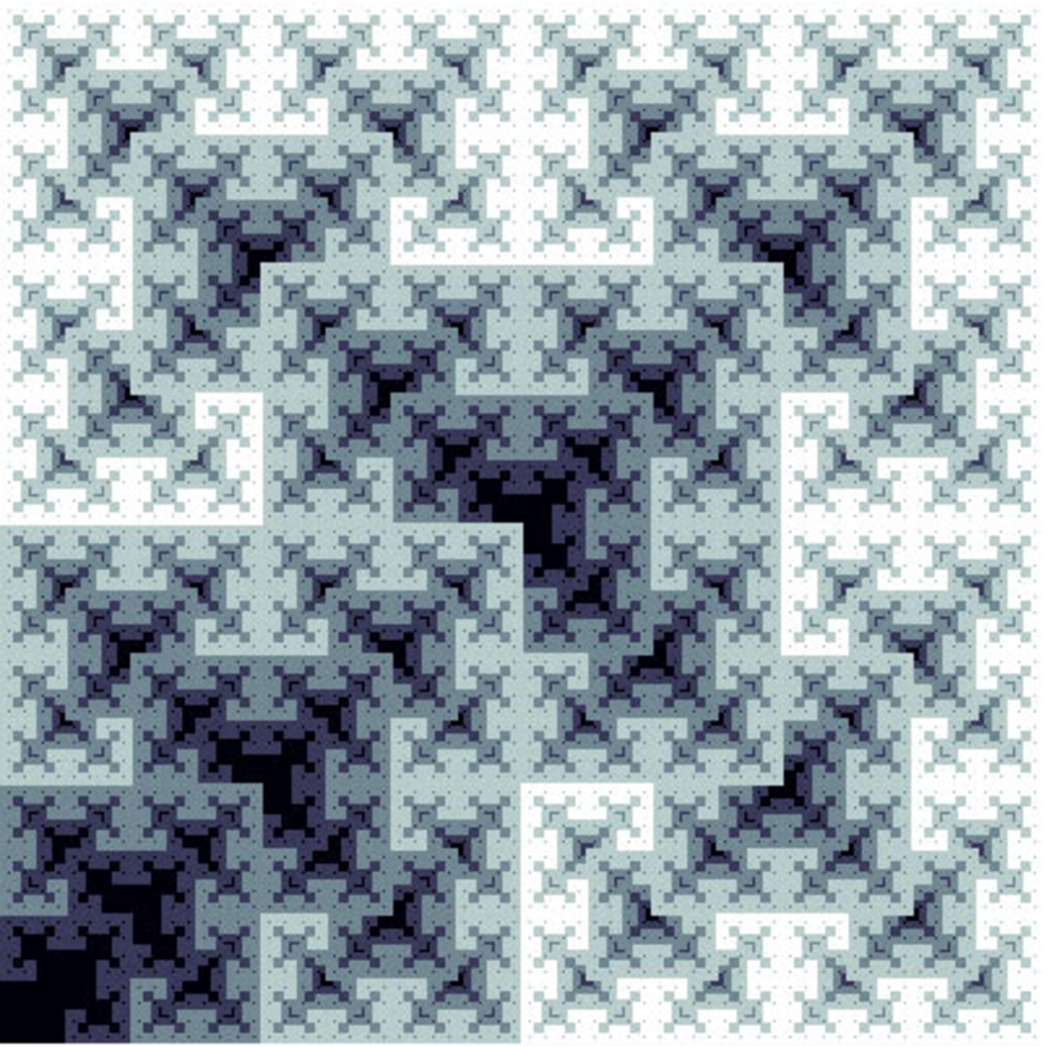}\\
 \caption{On the left, he figure obtained by the uniform algorithm with $n=100$. Performance data: number of iterations, $10$; time for $\Phi^{-1}$ set construction, $851.71$ seconds; time for iterations, $536.88$ seconds; overall running time, $1388.59$ seconds. On the right, he figure obtained by the aleatory algorithm with $n=400$ and $na=160000000$. Performance data: number of iterations, $10$; time for $\Phi^{-1}$ set construction, $1545.66$ seconds; time for iterations,  $19186.91$ seconds; overall running time, $20732.59$ seconds.}\label{example8_uniform and chaos}
  \end{figure}

\end{example}

%\newpage
\begin{example}\label{Fern_IFZS_Counterpart} The next example is the IFZS version of the classic fractal, the Barnsley Fern. The approximation by the algorithm \texttt{FuzzyIFSDraw($\mS$)} is presented in the Figure~\ref{Fern_Fuzzy}. Consider $(\mathbb{R}^2, d_{e})$ a metric space, $X=[0,1]^2$ and the fuzzy IFS $\phi_1,..., \phi_4: X \to X$ given by Example~\ref{discreteIFSex1},
with the grey scale functions
$\rho_1(t):=t$
\[\rho_2(t):=
\left\{
  \begin{array}{ll}
      0    & 0\leq t < 0.2505  \\
      0.25  & 0.2505 \leq t < 0.505  \\
      0.5   & 0.505\leq t < 0.7505  \\
      0.75 & 0.7505 \leq  t \leq 1
  \end{array}
\right.,
\] $\rho_3(t):= t$ and $\rho_4(t):=t$.\\
 \begin{table}[ht]
   \centering
   \begin{tabular}{c|c|c|c|c|c|c}
     \hline
     $\alpha$ & $[a,b]\times[c,d]$ & $D$ & $n$ & $N$ & $\ve$ & $\delta$ \\
     \hline
      $0.8680563766$ & $[0,1]\times[0,1]$ & $\sqrt{2}$ & $100$ & $11$ & $2.5\times 10^{-4}$ & $0.3076975065$ \\
     \hline
   \end{tabular}
   \caption{Resolution data for the uniform picture in \ref{Fern_Fuzzy}.}\label{table:fern_fuzzy_ifs}
 \end{table}

 \begin{figure}[!ht]
  \centering
  \includegraphics[width=3cm,frame]{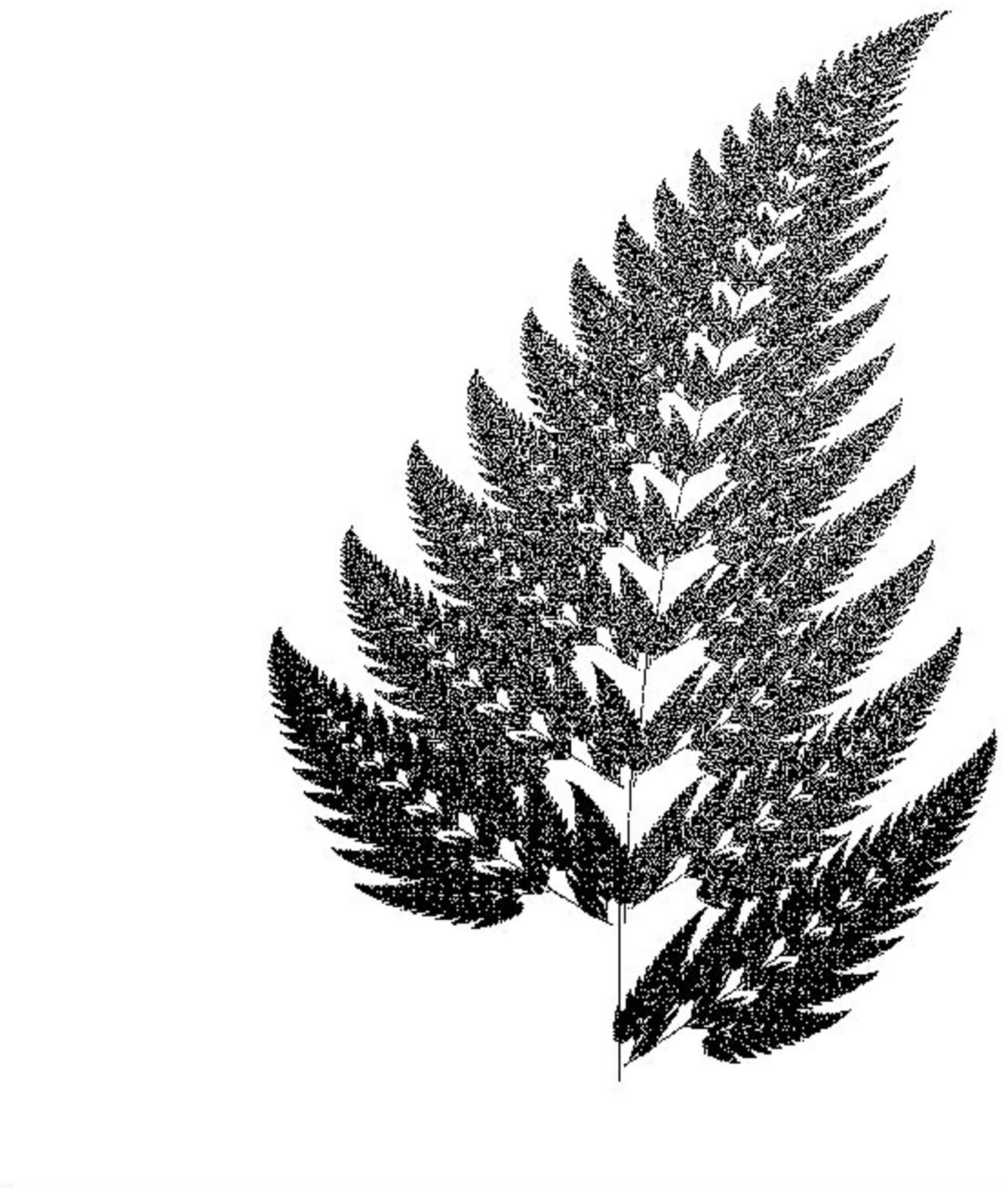}\;
  \includegraphics[width=3cm]{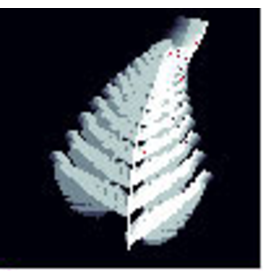}\;
  \includegraphics[width=3cm]{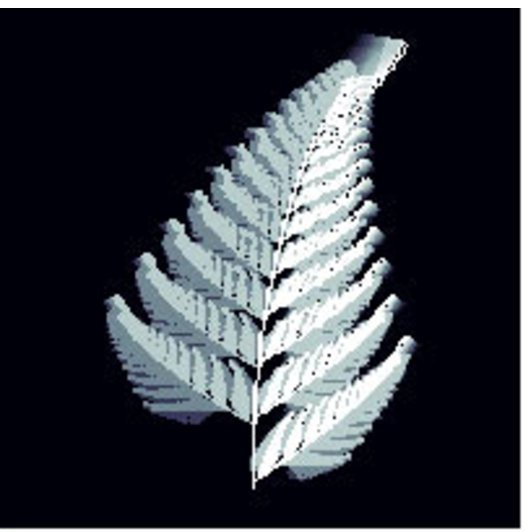}
  \caption{From the left to the right the output of the algorithm \texttt{IFSDraw($\mS$)} and the output of the algorithm \texttt{FuzzyIFSDraw($\mS$)} after 11 iterations on the uniform version and 3 iterations on the aleatory version.}\label{Fern_Fuzzy}
 \end{figure}
\end{example}

%\newpage

\begin{example}\label{discreteIFSex3_IFZS_Counterpart} This last example the IFZS version of the classic fractal, the Maple Leaf, from Example~\ref{discreteIFSex3}. The approximation by the algorithm \texttt{FuzzyIFSDraw($\mS$)} is presented in the Figure~\ref{Maple LeafFuzzy}.  Consider $(\mathbb{R}^2, d_{e})$ a metric space, $X=[0,1]^2$ and the fuzzy IFS $\phi_1,..., \phi_4: X \to X$ where
$\mS$ is the IFS from Example~\ref{discreteIFSex3}, with the grey scale functions
$\rho_1(t):=t$
\[\rho_2(t):=
\left\{
  \begin{array}{ll}
      0    & 0\leq t < 0.2505  \\
      0.25  & 0.2505 \leq t < 0.505  \\
      0.5   & 0.505\leq t < 0.7505  \\
      0.75 & 0.7505 \leq  t \leq 1
  \end{array}
\right.,
\] and $\rho_3(t):=t$.\\
  \begin{table}[ht]
   \centering
   \begin{tabular}{c|c|c|c|c|c|c}
     \hline
     $\alpha$ & $[a,b]\times[c,d]$ & $D$ & $n$ & $N$ & $\ve$ & $\delta$ \\
     \hline
      $0.8$ & $[0,1]\times[0,1]$ & $\sqrt{2}$ & $100$ & $8$ & $2.5\times 10^{-4}$ & $0.2435156640$ \\
     \hline
   \end{tabular}
   \caption{Resolution data for the uniform picture in \ref{Maple LeafFuzzy}.}\label{table:maple_fuzzy_ifs}
 \end{table}

 \begin{figure}[!ht]
  \centering
  \includegraphics[width=3cm]{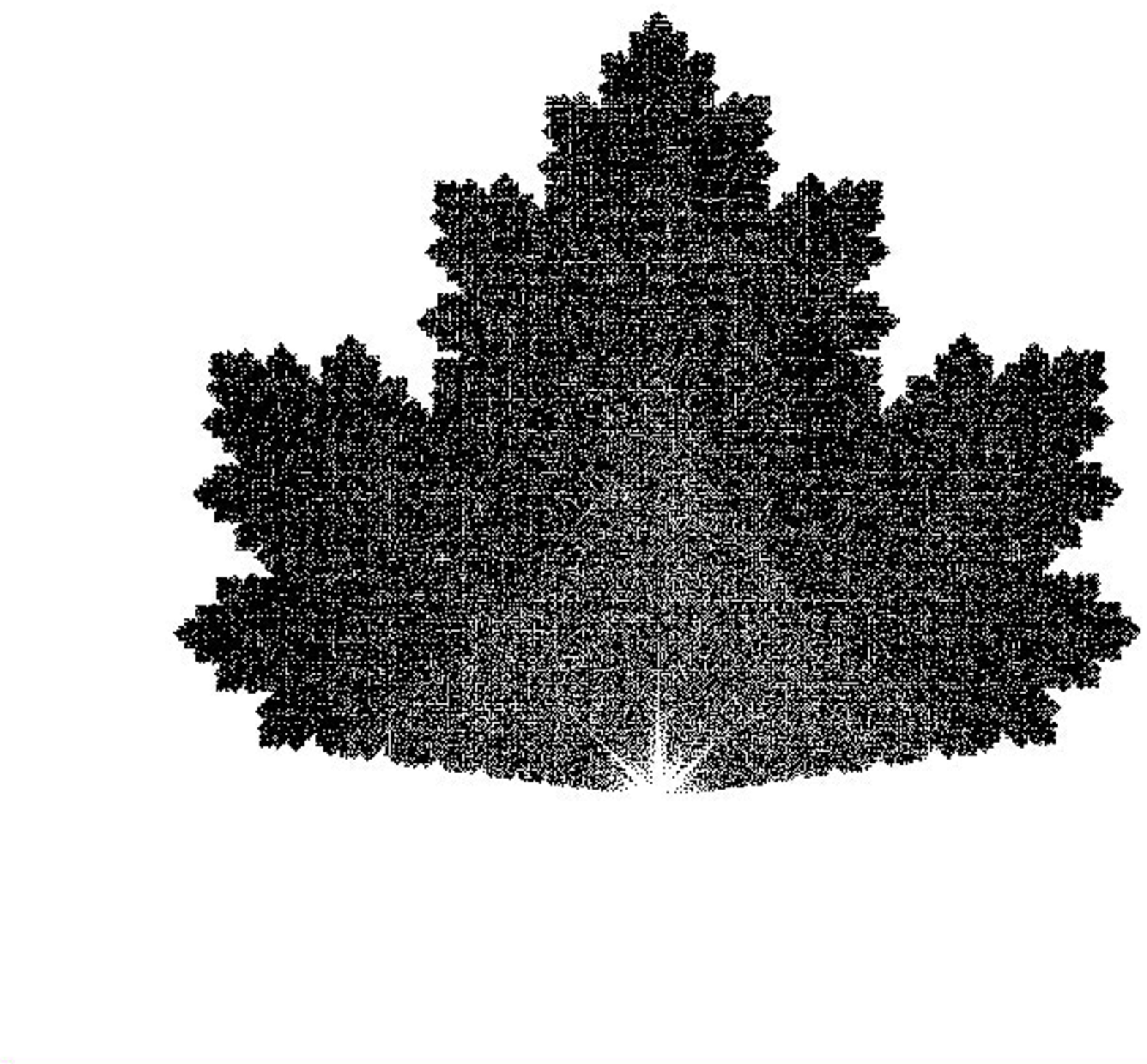}\;
  \includegraphics[width=3cm]{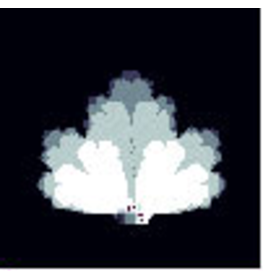}
  \includegraphics[width=3cm]{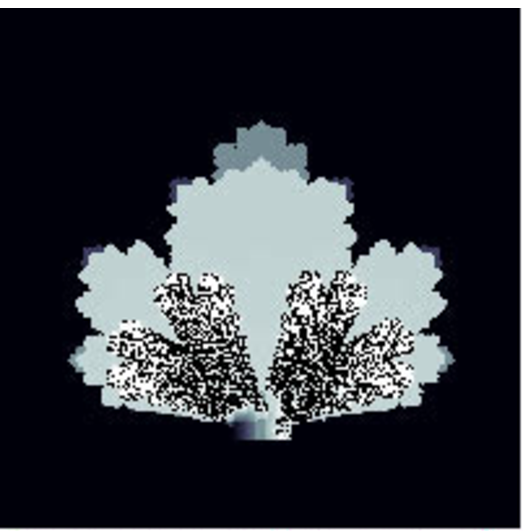}
  \caption{From the left to the right the output of the algorithm \texttt{IFSDraw($\mS$)} after 8 iterations and the output of the algorithm \texttt{FuzzyIFSDraw($\mS$)} uniform after 10 iterations and the aleatory one with $na=800000$ after $11$ iterations.}\label{Maple LeafFuzzy}
 \end{figure}
\end{example}

%\newpage

\section{Generalized IFSs and their fuzzyfication}\label{sec:Generalized IFSs and their fuzzyfication}
We first recall some basics of a generalization of the classical IFS theory introduced by R. Miculescu and A. Mihail in 2008. For references, see \cite{mihail2008recurrent}, \cite{mihail2010generalized}, \cite{strobin_swaczyna_2013} and references therein.\\
If $(X,d)$ is a metric space and $m\in\N$, then by $X^m$ we denote the Cartesian product of $m$ copies of $X$. We consider it as a metric space with the maximum metric
$$
d_m((x_0,...,x_{m-1}),(y_0,...,y_{m-1})):=\max\{d(x_0,y_0),...,d(x_{m-1},y_{m-1})\}.
$$
A map $f:X^m\to X$ is called a \emph{generalized Banach contraction}, if $\on{Lip}(f)<1$.

It turns out that a counterpart of the Banach fixed point theorem holds. Namely,
if $f:X^m\to X$ is a generalized Banach contraction, then there is a unique point $x_*\in X$ (called \emph{a generalized fixed point} of $f$), such that $f(x_*,...,x_*)=x_*$. Moreover, for every $x_0,...,x_{m-1}\in X$, the sequence $(x_k)$ defined by $$x_{k+m}=f(x_k,...,x_{k+m-1}),\;\;k\geq 0,$$
converges to $x_*$.\\
This result can be used to prove a counterpart of the Hutchinson--Barnsley theorem.
\begin{definition}\emph{
A }generalized iterated function system of order $m$\emph{ (GIFS in short) $\mS=(X,(\phi_j)_{j=1}^{L})$ consists of a finite family  $\phi_1,...,\phi_L$ of continuous maps from $X^m$ to $X$. Each GIFS $\mS$ generates the map $F_\mS:\K^*(X)^m\to\K^*(X)$, called }the generalized Hutchinson operator  (GH) \emph{, defined by
$$
\forall_{K_0,...,K_{m-1}\in \K^*(X)}\;F_\mS(K_0,...,K_{m-1}):=
\bigcup_{j=1}^L\phi_j(K_0\times...\times K_{m-1}).
$$
By the \emph{attractor} of a GIFS $\mS$ we mean the unique set $A_\mS\in\K^*(X)$ which satisfies
$$
A_\mS=F_\mS(A_\mS,...,A_\mS)=\bigcup_{j=1}^L\phi_j(A_\mS\times...\times A_\mS)
$$
and such that for every $K_0,...,K_m\in\K^*(X)$, the sequence $(K_k)$ defined by
$$
K_{k+m}:=F_\mS(K_k,...,K_{k+m-1}),\;\;k\geq 0,
$$
converges to $A_\mS$.
}\end{definition}

The following lemma is known:
\begin{lemma}\label{lem3g}
Let $(X,d)$ be a metric space and $\mS=(X,(\phi_j)_{j=1}^{L})$ be a GIFS consisting of generalized Banach contractions. Then $F_\mS$ is a generalized Banach contraction with $\on{Lip}(F_\mS)\leq\max\{\on{Lip}(\phi_j):j=1,...,L\}$.
\end{lemma}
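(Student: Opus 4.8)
The plan is to reduce the statement to three elementary facts about the Hausdorff metric and then chain them together, paralleling the proof of Lemma~\ref{lem3} but inserting one extra ingredient adapted to the product structure $X^m$. Throughout I would write $\alpha:=\max\{\on{Lip}(\phi_j):j=1,\dots,L\}<1$, let $h$ be the Hausdorff metric on $\K^*(X)$, and let $h_m$ be the Hausdorff metric on $\K^*(X^m)$ induced by the maximum metric $d_m$. Since the relevant metric on $\K^*(X)^m$ is the maximum metric $\max_i h(K_i,D_i)$, the asserted bound $\on{Lip}(F_\mS)\leq\alpha$ amounts to proving
$$
h\big(F_\mS(K_0,\dots,K_{m-1}),F_\mS(D_0,\dots,D_{m-1})\big)\leq \alpha\,\max_{0\leq i\leq m-1} h(K_i,D_i)
$$
for all tuples in $\K^*(X)^m$, and $\alpha<1$ then gives the contraction.

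First I would record the product formula
$$
h_m(K_0\times\cdots\times K_{m-1},\,D_0\times\cdots\times D_{m-1})=\max_{0\leq i\leq m-1} h(K_i,D_i),
$$
valid for nonempty compact $K_i,D_i$. This follows by computing the one-sided excess of a product set: for a point $x=(x_0,\dots,x_{m-1})$, its $d_m$-distance to $D_0\times\cdots\times D_{m-1}$ equals $\max_i d(x_i,D_i)$, so the supremum over $x$ in the first product equals $\max_i \sup_{x_i\in K_i} d(x_i,D_i)$; symmetrising in the two products yields the formula. This is the one genuinely new step compared with the classical lemma, since it is precisely where the choice of the maximum metric on $X^m$ is exploited.

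Next I would use two standard and already-classical facts. (ii) A Lipschitz map $\phi:X^m\to X$ sends compacts to compacts and satisfies $h(\phi(C),\phi(E))\leq \on{Lip}(\phi)\,h_m(C,E)$ for $C,E\in\K^*(X^m)$: given $z=\phi(c)\in\phi(C)$, choose $e\in E$ realizing the distance $d_m(c,E)$ (possible since $E$ is compact), whence $d(z,\phi(E))\leq d(\phi(c),\phi(e))\leq \on{Lip}(\phi)\,d_m(c,e)\leq \on{Lip}(\phi)\,h_m(C,E)$, and bounding the excess in both directions gives the inequality. (iii) Finite unions are nonexpansive, $h\big(\bigcup_{j}A_j,\bigcup_j B_j\big)\leq \max_j h(A_j,B_j)$, proved again by estimating each excess, since $d(x,\bigcup_j B_j)\leq d(x,B_j)$ whenever $x\in A_j$.

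Finally I would combine these. As $\phi_j(K_0\times\cdots\times K_{m-1})$ is a continuous image of a nonempty compact set, it is itself nonempty and compact, so $F_\mS$ indeed maps into $\K^*(X)$, and
$$
h\Big(\bigcup_{j}\phi_j\big(\textstyle\prod_i K_i\big),\bigcup_{j}\phi_j\big(\textstyle\prod_i D_i\big)\Big)\ \leq\ \max_j h\big(\phi_j(\textstyle\prod_i K_i),\phi_j(\textstyle\prod_i D_i)\big)\ \leq\ \max_j \on{Lip}(\phi_j)\,h_m(\textstyle\prod_i K_i,\textstyle\prod_i D_i),
$$
using (iii) then (ii); by the product formula this is at most $\alpha\,\max_i h(K_i,D_i)$, which is the desired bound. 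I do not expect a genuine obstacle here: the only step absent from the classical IFS argument is the product formula, and even that is a short excess computation. The mild care required is to invoke compactness of the $D_i$ in order to realise the excesses as attained minima, and to check that products, continuous images, and finite unions of nonempty compacts stay nonempty and compact, so that every set appearing is a legitimate element of $\K^*(X)$ or $\K^*(X^m)$.
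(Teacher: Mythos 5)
Your proof is correct, but there is nothing in the paper to compare it against: Lemma~\ref{lem3g} is introduced there with the phrase ``The following lemma is known'' and is quoted without proof from the GIFS literature (the Mihail--Miculescu and Strobin--Swaczyna references given at the start of Section~\ref{sec:Generalized IFSs and their fuzzyfication}). Your argument supplies exactly what the paper omits, and it is the standard one from those sources: the decomposition into (i) the product formula $h_m\bigl(K_0\times\cdots\times K_{m-1},D_0\times\cdots\times D_{m-1}\bigr)=\max_i h(K_i,D_i)$, which is indeed the one place where the choice of the maximum metric $d_m$ on $X^m$ is used; (ii) the bound $h(\phi(C),\phi(E))\leq \on{Lip}(\phi)\,h_m(C,E)$ for Lipschitz images of compacta; and (iii) the nonexpansiveness of finite unions, the last two being the same facts that underlie the classical Lemma~\ref{lem3}. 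Chaining these gives $h\bigl(F_\mS(K_0,\dots,K_{m-1}),F_\mS(D_0,\dots,D_{m-1})\bigr)\leq\alpha\max_i h(K_i,D_i)$ with $\alpha=\max_j\on{Lip}(\phi_j)<1$, which is precisely the assertion, and your checks that products, continuous images, and finite unions of nonempty compacta stay in $\K^*(X^m)$ or $\K^*(X)$ make the operator well defined. One cosmetic remark: the step where you choose $e\in E$ attaining $d_m(c,E)$ uses compactness of $E$ (which you correctly flag); one could instead work with infima up to an arbitrary $\varepsilon>0$ and avoid attainment altogether, which slightly shortens the bookkeeping.
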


 From now on, given a GIFS $\mS$ consisting of generalized Banach contractions, we denote $\alpha_\mS:=\max\{\on{Lip}(\phi_j):j=1,...,L\}.$\\
From the perspective of the algorithms presented later, it is worth to consider also a bit different approach. Namely, given a GIFS $\mS=(X,(\phi_j)_{j=1}^L)$, define the map $\overline{F}_\mS:\K^*(X)\to\K^*(X)$ by
\begin{equation*}
\forall_{K\in\K^*(X)}\;\overline{F}_\mS(K):=F_\mS(K,...,K)=\bigcup_{j=1}^L\phi_j(K\times...\times K).
\end{equation*}
Lemma \ref{lem3g} implies the following:
\begin{lemma}\label{lem3gg}
Let $(X,d)$ be a metric space and  $\mS=(X,(\phi_j)_{j=1}^{L})$ be a GIFS consisting of generalized Banach contractions. Then $\overline{F}_\mS$ is a Banach contraction with $\on{Lip}(\overline{F}_\mS)\leq \alpha_\mS$.
\end{lemma}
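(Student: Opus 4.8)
The plan is to observe that $\overline{F}_\mS$ is nothing but the restriction of the generalized Hutchinson operator $F_\mS$ to the diagonal of $\K^*(X)^m$, and then to read off the contraction estimate directly from Lemma \ref{lem3g}, without any new argument. First I would note that $\overline{F}_\mS$ is well defined as a selfmap of $\K^*(X)$: since $F_\mS$ takes values in $\K^*(X)$ by definition, so does $K\mapsto F_\mS(K,\dots,K)$.

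Next, recall what Lemma \ref{lem3g} actually asserts. The inequality $\on{Lip}(F_\mS)\leq\alpha_\mS$ is taken with respect to the maximum metric $d_m$ on $\K^*(X)^m$ (induced by $h$), which for tuples of compact sets is $\max_{0\leq i\leq m-1} h(K_i,D_i)$. Thus for all $K_0,\dots,K_{m-1},D_0,\dots,D_{m-1}\in\K^*(X)$ we have
\begin{equation*}
h\bigl(F_\mS(K_0,\dots,K_{m-1}),\,F_\mS(D_0,\dots,D_{m-1})\bigr)\leq \alpha_\mS\,\max_{0\leq i\leq m-1} h(K_i,D_i).
\end{equation*}

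The key step is then simply to specialize this estimate to the diagonal. Taking $K_0=\dots=K_{m-1}=K$ and $D_0=\dots=D_{m-1}=D$, the maximum on the right collapses to the single value $h(K,D)$, so that
\begin{equation*}
h\bigl(\overline{F}_\mS(K),\overline{F}_\mS(D)\bigr)=h\bigl(F_\mS(K,\dots,K),\,F_\mS(D,\dots,D)\bigr)\leq \alpha_\mS\,h(K,D).
\end{equation*}
This is exactly the statement $\on{Lip}(\overline{F}_\mS)\leq\alpha_\mS$, and since $\alpha_\mS<1$ it shows that $\overline{F}_\mS$ is a Banach contraction, as claimed.

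I do not expect any genuine obstacle here: the entire content is carried by Lemma \ref{lem3g}, and the proof is a one-line specialization. The only point requiring a moment of care is confirming that the maximum metric $d_m$ used to define $\on{Lip}(F_\mS)$ really does reduce to $h$ along the diagonal — which it plainly does, being a maximum of identical entries $h(K,D)$.
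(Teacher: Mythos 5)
Your proposal is correct and matches the paper's intended argument: the paper gives no written proof, stating the lemma as an immediate consequence of Lemma \ref{lem3g}, and the implicit justification is precisely your diagonal specialization of the Lipschitz estimate for $F_\mS$ under the maximum metric. Nothing is missing; the observation that the maximum of identical entries collapses to $h(K,D)$ is the whole content.
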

Clearly, in the above frame, if additionally $X$ is complete, then the fixed point of $\overline{F}_\mS$ is the attractor of $\mS$.

Now we recall the fuzzy version of the above setting, which was introduced in \cite{Oliveira-2017}.

Given $m \geq 2$ and $u_{0}, ..., u_{m-1} \in \mathcal{F}_{X}$ we define the Cartesian product  ${\btimes_iu_i}=u_{0}\times ...\times u_{m-1} \in \mathcal{F}_{X^m}$ by
$$\left(\btimes_iu_i\right)(x_0, ..., x_{m-1}){:= \bigwedge_{i=0}^{m-1 }u_{i}(x_{i}):=\min\{u_i(x_i):i=0,...,m-1\}}.$$
As we proved in \cite[Lem. 3.9]{Oliveira-2017},
{if $u_0,...,u_{m-1}\in\F^*_X$, then $\btimes_iu_i\in\F^*_{X^m}$, and, moreover
} if {$u_0,...,u_{m-1} \in\F^*_X$ and  $v_0,...,v_{m-1}\in\F^*_X$}, then {(we denote by the same symbol $d_\infty$ the metric on $\F^*_{X^m}$)}
$$
d_\infty(\btimes_iu_i,\btimes_iv_i)=\max\{d_\infty(u_j,v_j):j=0,...,m-1\}.
$$
{In particular, the map $(\F^*_X)^m\ni(u_0,...,u_{m-1})\to \btimes_iu_i\in\F^*_{X^m}$ is an isometric embedding.}

\begin{definition} \label{GIFZS definition}
A generalized iterated fuzzy function system of degree $m$ \emph{ (GIFZS in short) \\ $\mS=(X,(\phi_j)_{j=1}^{L},(\rho_j)_{j=1}^{L})$ consists of a GIFS $\mS=(X,(\phi_j)_{j=1}^{L})$ with a set of admissible grey level maps {$(\rho_j)_{j \in \{1,...,L\}}$}.\\
The operator $Z_\mathcal{S}:  \left(\mathcal{F}_{X}^{*}\right)^m\to \mathcal{F}_{X}^{*}$ defined by
$${Z_\mathcal{S}(u_0,...,u_{m-1})}:= \bigvee_{j \in \{1,...,L\}} \rho_{j}(\phi_{j}(\btimes_{i}  u_{i})){:=\max\{ \rho_{j}(\phi_{j}(\btimes_{i}  u_{i})):j=1,...,L\}}$$
is called }the generalized fuzzy {Hutchinson operator (GFH)} associated to $\mathcal{Z_S}$.\\
A fuzzy set $u_\mS\in \mathcal{F}_{X}^{*}$  is called a generalized fuzzy fractal of a GIFZS $\mathcal{S}=(X, (\phi_j), (\rho_{j}))_{j \in \{1,...,L\}}$ if $Z_\mathcal{S}(u_\mS,...,u_\mS)=u_\mS$, that is
$$u_\mS= \bigvee_{j \in \{1,...,L\}} \rho_{j}(\phi_{j}(\btimes_{i=0}^{m-1}u_\mS)).$$
\end{definition}

The following result is a consequence of \cite[Thm. 3.14]{Oliveira-2017}.

\begin{lemma}\label{lem3f}
Let $(X,d)$ be a metric space and  $\mS=(X,(\phi_j)_{j=1}^{L},(\rho_j)_{j=1}^{L})$ be a GIFZS consisting of generalized Banach contractions. Then $Z_\mathcal{S}$ is a generalized Banach contraction with $\on{Lip}(Z_\mS)\leq\max\{\on{Lip}(\phi_j):j=1,...,L\}$.
\end{lemma}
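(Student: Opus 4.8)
The plan is to pass to $\alpha$-cuts and exploit the characterization (\ref{dinfty}) of $d_\infty$ as $\sup_{\alpha\in(0,1]}h([\cdot]^\alpha,[\cdot]^\alpha)$, so that the Lipschitz estimate for $Z_\mS$ reduces to a Hausdorff estimate between the corresponding cuts that is uniform in $\alpha$. Since the product map $(u_0,\dots,u_{m-1})\mapsto\btimes_i u_i$ is an isometric embedding of $(\F^*_X)^m$ (with the maximum metric) into $\F^*_{X^m}$, the goal is exactly to show $d_\infty(Z_\mS(u_0,\dots,u_{m-1}),Z_\mS(v_0,\dots,v_{m-1}))\le\alpha_\mS\max_i d_\infty(u_i,v_i)$, which is the generalized Banach contraction inequality with constant $\alpha_\mS=\max_j\on{Lip}(\phi_j)<1$.

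First I would compute $[Z_\mS(u_0,\dots,u_{m-1})]^\alpha$ explicitly, for $\alpha\in(0,1]$, by peeling off the four operations in the definition of $Z_\mS$. (i) For the product, $[\btimes_i u_i]^\alpha=\prod_i[u_i]^\alpha$, since $\min_i u_i(x_i)\ge\alpha$ iff each $u_i(x_i)\ge\alpha$. (ii) For Zadeh's extension, $[\phi_j(w)]^\alpha=\phi_j([w]^\alpha)$; the inclusion $\supseteq$ is immediate, and $\subseteq$ follows because $\phi_j$ is continuous and $w$ is usc with compact support, so the supremum defining $\phi_j(w)(y)$ is attained on the compact set $\phi_j^{-1}(y)\cap\supp(w)$. (iii) For an admissible grey-level map, right-continuity and monotonicity give $[\rho_j(w)]^\alpha=[w]^{\beta_{j,\alpha}}$ with $\beta_{j,\alpha}:=\inf\{t:\rho_j(t)\ge\alpha\}$, where crucially $\beta_{j,\alpha}>0$ because $\rho_j(0)=0<\alpha$. (iv) For the join, $[\bigvee_j w_j]^\alpha=\bigcup_j[w_j]^\alpha$. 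Combining, and writing $J_\alpha:=\{j:\rho_j(1)\ge\alpha\}$, I obtain $[Z_\mS(u_0,\dots,u_{m-1})]^\alpha=\bigcup_{j\in J_\alpha}\phi_j\big(\prod_i[u_i]^{\beta_{j,\alpha}}\big)$.

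With this formula in hand the estimate is routine. I would use $h\big(\bigcup_{j\in J_\alpha}A_j,\bigcup_{j\in J_\alpha}B_j\big)\le\max_{j\in J_\alpha}h(A_j,B_j)$, then $h(\phi_j(S),\phi_j(T))\le\on{Lip}(\phi_j)\,h_m(S,T)$ for the $\on{Lip}(\phi_j)$-Lipschitz map $\phi_j:X^m\to X$ (with $h_m$ the Hausdorff metric on $X^m$), then $h_m\big(\prod_i[u_i]^{\beta_{j,\alpha}},\prod_i[v_i]^{\beta_{j,\alpha}}\big)=\max_i h([u_i]^{\beta_{j,\alpha}},[v_i]^{\beta_{j,\alpha}})$ for the maximum metric, and finally $h([u_i]^{\beta_{j,\alpha}},[v_i]^{\beta_{j,\alpha}})\le d_\infty(u_i,v_i)$ since $\beta_{j,\alpha}\in(0,1]$. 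Chaining these inequalities and taking the supremum over $\alpha\in(0,1]$ via (\ref{dinfty}) yields $d_\infty(Z_\mS(u_0,\dots,u_{m-1}),Z_\mS(v_0,\dots,v_{m-1}))\le\alpha_\mS\max_i d_\infty(u_i,v_i)$, as required.

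The hard part will be the grey-level step (iii): verifying $[\rho_j(w)]^\alpha=[w]^{\beta_{j,\alpha}}$ with a strictly positive level, which is precisely where admissibility (nondecreasing, right-continuous, $\rho_j(0)=0$) enters, and keeping the level bookkeeping correct so that every cut appearing is a genuine $(0,1]$-cut and hence controlled by $d_\infty$. A secondary point requiring care is the nonemptiness/common-index-set issue in (iv): one must observe that $J_\alpha$ depends only on the maps $\rho_j$ and not on the fuzzy arguments, and is nonempty for $\alpha\in(0,1]$ because some $\rho_j(1)=1$, so the union-Hausdorff inequality is applied to matching families of nonempty compacta. The Zadeh-extension identity (ii) also deserves a short justification from upper semicontinuity and compact support, though this is standard.
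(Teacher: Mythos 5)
Your proof is correct, but note that the paper itself does not prove Lemma \ref{lem3f}: it is stated only as a consequence of \cite[Thm. 3.14]{Oliveira-2017}, so there is no internal argument to compare against. What you have written is essentially a self-contained reconstruction of the strategy used in that cited reference (going back to \cite{Cabrelli-1992} for the case $m=1$): compute the $\alpha$-cuts of $Z_\mS$ by peeling off the four operations (product, Zadeh extension, grey-level composition, finite join), then chain the elementary Hausdorff inequalities for unions, Lipschitz images and products, and finish by taking the supremum over $\alpha\in(0,1]$ via (\ref{dinfty}). All four cut identities you state are true under the stated hypotheses, the bookkeeping with $J_\alpha=\{j:\rho_j(1)\geq\alpha\}$ (nonempty, independent of the fuzzy arguments, with all cuts nonempty and compact by normality and compact support) is handled correctly, and the final estimate gives exactly $\on{Lip}(Z_\mS)\leq\alpha_\mS=\max_j\on{Lip}(\phi_j)<1$ with respect to the maximum metric on $(\F^*_X)^m$, which is the generalized-contraction claim. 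One small imprecision in step (iii): $\beta_{j,\alpha}>0$ does \emph{not} follow from $\rho_j(0)=0<\alpha$ alone; the map with $\rho(0)=0$ and $\rho(t)=1$ for $t>0$ satisfies this yet has $\beta_{j,\alpha}=\inf(0,1]=0$ (and for it the identity $[\rho(w)]^\alpha=[w]^{\beta_{j,\alpha}}$ fails as well). What rules this out is right-continuity of $\rho_j$ at $0$, which both forces $\beta_{j,\alpha}>0$ and guarantees that $\{t:\rho_j(t)\geq\alpha\}$ is the closed interval $[\beta_{j,\alpha},1]$; since you invoke right-continuity in the same sentence and later single out admissibility as the crucial ingredient of this step, this is a matter of wording rather than a gap.
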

 From now on, given a GIFZS $\mS$ consisting of generalized Banach contractions, we denote $\alpha_\mS:=\max\{\on{Lip}(\phi_j):j=1,...,L\}.$

From the perspective of the algorithms presented later, it is worth to consider also a bit different approach. Namely, given a GIFZS $\mS=(X,(\phi_j)_{j=1}^{L},(\rho_j)_{j=1}^{L})$, define the map $\overline{Z}_\mS:\F^*_X\to\F^*_X$ by
\begin{equation*}
\forall_{u\in\F^*_X}\;\overline{Z}_\mS(u):=Z_\mS(u,...,u)=
\bigcup_{j=1}^L\rho_{j}(\phi_j(u\times...\times u)).
\end{equation*}
Lemma \ref{lem3f} implies the following:
\begin{lemma}\label{lem3fff}
Let $(X,d)$ be a metric space and $\mS=(X,(\phi_j)_{j=1}^{L},(\rho_j)_{j=1}^{L})$ be a GIFZS consisting of generalized Banach contractions. Then $\overline{Z}_\mS$ is a Banach contraction and $\on{Lip}(\overline{Z}_\mS)\leq\alpha_\mS$.
\end{lemma}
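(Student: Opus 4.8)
The plan is to obtain this lemma from Lemma \ref{lem3f} by exactly the same device that produced Lemma \ref{lem3gg} from Lemma \ref{lem3g}: realize $\overline{Z}_\mS$ as the restriction of the generalized operator $Z_\mS$ to the diagonal of $(\F^*_X)^m$, and show that this diagonal restriction inherits the Lipschitz bound. By definition $\overline{Z}_\mS(u)=Z_\mS(u,\dots,u)$, so if $\Delta:\F^*_X\to(\F^*_X)^m$ denotes the diagonal embedding $\Delta(u)=(u,\dots,u)$, then $\overline{Z}_\mS=Z_\mS\circ\Delta$. The whole task reduces to controlling the Lipschitz constant of this composition.

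First I would record that $\Delta$ is an isometry when $(\F^*_X)^m$ carries the maximum metric (the metric under which ``generalized Banach contraction'' is defined, in analogy with $d_m$ on $X^m$). Indeed, for $u,v\in\F^*_X$ the distance between $\Delta(u)$ and $\Delta(v)$ in $(\F^*_X)^m$ is $\max\{d_\infty(u,v),\dots,d_\infty(u,v)\}=d_\infty(u,v)$, so $\Delta$ preserves $d_\infty$ exactly. Then I would apply Lemma \ref{lem3f}, which gives $\on{Lip}(Z_\mS)\leq\alpha_\mS$, and compose the two estimates: for all $u,v\in\F^*_X$,
$$
d_\infty\big(\overline{Z}_\mS(u),\overline{Z}_\mS(v)\big)=d_\infty\big(Z_\mS(\Delta(u)),Z_\mS(\Delta(v))\big)\leq\alpha_\mS\, d_\infty(u,v).
$$
Hence $\on{Lip}(\overline{Z}_\mS)\leq\alpha_\mS$, and since each $\phi_j$ is a generalized Banach contraction we have $\alpha_\mS=\max_j\on{Lip}(\phi_j)<1$, so $\overline{Z}_\mS$ is a genuine Banach contraction on $\F^*_X$.

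There is no real obstacle here; the only point requiring a moment's care is confirming that the diagonal embedding is distance-preserving, and that is immediate from the max-metric identity $d_\infty(\btimes_i u_i,\btimes_i v_i)=\max_j d_\infty(u_j,v_j)$ recorded just before Definition \ref{GIFZS definition} (specialized to $u_j=u$, $v_j=v$ for all $j$). Everything else is the routine composition of a contraction with an isometry, mirroring the passage from Lemma \ref{lem3g} to Lemma \ref{lem3gg} verbatim.
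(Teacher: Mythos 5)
Your proof is correct and takes exactly the route the paper intends: the paper states Lemma~\ref{lem3fff} without a written proof, presenting it as an immediate consequence of Lemma~\ref{lem3f} (just as Lemma~\ref{lem3gg} follows from Lemma~\ref{lem3g}), and your diagonal-embedding argument is precisely the one-line verification being left implicit. The observation that $\Delta$ is an isometry for the maximum metric, combined with $\on{Lip}(Z_\mS)\leq\alpha_\mS<1$, is all that is needed.
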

Clearly, in the above frame, if additionally $X$ is complete, then the fixed point of $\overline{Z}_\mS$ is the attractor of $\mS$.

\section{GIFS and GIFZS discretization}\label{sec:GIFS and GIFZS discretization}

\begin{definition}\emph{
Given a GIFS $\mS$ on a metric space $X$ with the attractor $A_\mS$ and $\delta>0$, a set $A_\delta\in\K^*(X)$ will be called an attractor of $\mS$ }with resolution $\delta$\emph{, if  $h(A_\delta,A_\mS)\leq\delta$.}
\end{definition}
It can be easily proved (similarly as Lemma \ref{lemm2}(c)) that for a GIFS $\mS=(X,(\phi_j)_{j=1}^L,(\rho_j)_{j=1}^L)$, an $\ve$-net $\hat{X}$ and an $\ve$-projection $r:X\to\hat{X}$, the discretization $(r\circ \overline{F}_\mS)_{\vert \K^*(\hat{X})}$ equals the operator $\overline{F}_{\hat{\mS}}$ adjusted to the GIFS $\hat{\mS}=(\hat{X},(\hat{\phi}_j)_{j=1}^L,(\rho_j)_{j=1}^L)$ consisting of discretizations $\hat{\phi}_j:=(r_m\circ \phi_j)_{\vert\hat{X}^m}$, where $r_m(x_1,...,x_m)=(r(x_1),...,r(x_m))$ is the natural projection of $X^m$ to $\hat{X}^m$. Hence
Lemma \ref{lem3f} and Theorem \ref{dfp}  imply the following ``discrete" version of the Hutchinson--Barnsley theorem for GIFSs.
\begin{theorem}\label{ttt3}
Let $(X,d)$ be a complete metric space and $\mathcal{S}=(X, (\phi_j)_{j=1}^L)$ be a GIFS on $X$ consisting of generalized Banach contractions. Let $\ve>0$, ${\hat{X}}$ be a proper $\ve$-net, $r:X\to {\hat{X}}$ be an $\ve$-projection on ${\hat{X}}$ and  $\mS=(X,(\phi_j)_{j=1}^{L},(\rho_j)_{j=1}^{L})$, where $\hat{\phi}_j:=(r_m\circ \phi_{j})_{\vert {\hat{X}}^m}$ for $j=1,...,L$.\\
Then for any $K\in\K^*(\hat{X})$ and $n\in\N$,
$$
d_\infty(\overline{F}_{\hat{\mS}}^n(K),A_\mS)\leq\frac{5\ve}{1-\alpha_\mS}+\alpha_\mS^n d_\infty(K,A_\mS),
$$
where $A_\mS$ is the attractor of $\mS$.\\
In particular, there is $n_0\in\N$ such that for every $n\geq n_0$, $\overline{F}_{\hat{\mS}}^n(K)$ is an attractor of $\mS$ with the resolution $\frac{6\ve}{1-\alpha_\mS}$.
\end{theorem}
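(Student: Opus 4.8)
The plan is to reduce the statement to the unary discrete fixed point theorem, Theorem~\ref{dfp}, by working with the collapsed operator $\overline{F}_\mS$ rather than the genuinely $m$-ary operator $F_\mS$. The decisive structural fact, already recorded in Lemma~\ref{lem3gg}, is that although $F_\mS$ acts on $\K^*(X)^m$, the diagonal map $\overline{F}_\mS(K)=F_\mS(K,\dots,K)=\bigcup_{j=1}^L\phi_j(K\times\cdots\times K)$ is an \emph{ordinary} Banach contraction on the single complete metric space $(\K^*(X),h)$ (the Hausdorff metric, which coincides with $d_\infty$ under the isometric embedding $K\mapsto\chi_K$) with $\on{Lip}(\overline{F}_\mS)\leq\alpha_\mS$, and whose unique fixed point is exactly the attractor $A_\mS$. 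This is what lets us avoid formulating any multivariate version of Theorem~\ref{dfp}.

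First I would verify that the discretized operator $\overline{F}_{\hat{\mS}}$ is literally the $r$-discretization of $\overline{F}_\mS$, that is, $\overline{F}_{\hat{\mS}}=(r\circ\overline{F}_\mS)_{\vert\K^*(\hat{X})}$; this is the GIFS analogue of Lemma~\ref{lemm2}(c), and it is immediate because $r$ commutes with finite unions and, for $K\in\K^*(\hat{X})$, one has $K\times\cdots\times K\subseteq\hat{X}^m$, where $\hat{\phi}_j$ agrees with $r\circ\phi_j$. Next I would invoke Lemma~\ref{lemm2}(a),(b) to record that $\K^*(\hat{X})$ is a proper $\ve$-net of $\K^*(X)$ and that the set-level map $r$ is an $\ve$-projection onto it.

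With these identifications in place, Theorem~\ref{dfp} applies directly to $f=\overline{F}_\mS$ on $\K^*(X)$, with fixed point $x_*=A_\mS$, $\ve$-net $\K^*(\hat{X})$, $\ve$-projection $r$, and $r$-discretization $\hat{f}=\overline{F}_{\hat{\mS}}$. Estimate~(\ref{approx}) then yields, for every $K\in\K^*(\hat{X})$ and $n\in\N$, the bound $h(\overline{F}_{\hat{\mS}}^n(K),A_\mS)\leq\frac{5\ve}{1-\on{Lip}(\overline{F}_\mS)}+\on{Lip}(\overline{F}_\mS)^n\,h(K,A_\mS)$; since $\on{Lip}(\overline{F}_\mS)\leq\alpha_\mS<1$, the denominator satisfies $\frac{1}{1-\on{Lip}(\overline{F}_\mS)}\leq\frac{1}{1-\alpha_\mS}$ and $\on{Lip}(\overline{F}_\mS)^n\leq\alpha_\mS^n$, so both terms may be replaced by $\alpha_\mS$, giving exactly the claimed inequality. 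For the ``in particular'' clause, because $\alpha_\mS^n\,h(K,A_\mS)\to 0$ one chooses $n_0$ so that for $n\geq n_0$ this term is at most $\frac{\ve}{1-\alpha_\mS}$, whence $h(\overline{F}_{\hat{\mS}}^n(K),A_\mS)\leq\frac{6\ve}{1-\alpha_\mS}$, i.e.\ $\overline{F}_{\hat{\mS}}^n(K)$ is an attractor of $\mS$ with resolution $\frac{6\ve}{1-\alpha_\mS}$.

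I do not expect a serious obstacle: the entire content is the reduction encoded in Lemma~\ref{lem3gg}, which converts the $m$-ary GIFS problem into a standard contraction problem on $\K^*(X)$ to which the already-proved Theorem~\ref{dfp} applies verbatim. The only point demanding any care is the commuting identity $\overline{F}_{\hat{\mS}}=(r\circ\overline{F}_\mS)_{\vert\K^*(\hat{X})}$, and even there the verification is routine once one observes that restricting to diagonal arguments $K\times\cdots\times K$ with $K\subseteq\hat{X}$ keeps everything inside $\hat{X}^m$, where $\hat{\phi}_j$ and $r\circ\phi_j$ coincide. (I would also flag that the theorem writes $d_\infty$ for what is the Hausdorff metric $h$ on $\K^*(X)$; these agree via the embedding above, so no change of substance is needed.)
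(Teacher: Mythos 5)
Your proposal is correct and takes essentially the same route as the paper: the paper likewise obtains Theorem~\ref{ttt3} by observing that $(r\circ\overline{F}_\mS)_{\vert\K^*(\hat{X})}=\overline{F}_{\hat{\mS}}$ (``similarly as Lemma~\ref{lemm2}(c)''), and then applying the discrete fixed point theorem (Theorem~\ref{dfp}) to the Banach contraction $\overline{F}_\mS$ of Lemma~\ref{lem3gg}, whose fixed point is $A_\mS$, using Lemma~\ref{lemm2}(a),(b) for the net and projection structure on $\K^*(X)$. The only differences are cosmetic: you spell out the diagonal-restriction verification and the monotonicity step ($\on{Lip}(\overline{F}_\mS)\leq\alpha_\mS$) that the paper leaves implicit.
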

Finally, we state a counterpart for the fuzzy version.

\begin{definition}\emph{
Given a GIFZS $\mS$ on a metric space $X$ with the fuzzy attractor $u_\mS$ and $\delta>0$, a fuzzy set $u_\delta\in\F^*_X$ will be called an attractor of $\mS$ }with resolution $\delta$\emph{, if  $d_\infty(u_\delta,u_\mS)\leq\delta$.}
\end{definition}
Similarly as Lemma \ref{lemma4}(e) we can prove that for a GIFZS $\mS=(X,(\phi_j)_{j=1}^L,(\rho_j)_{j=1}^L)$, an $\ve$-net $\hat{X}$ and an $\ve$-projection $r:X\to\hat{X}$, the discretization $(r\circ \overline{Z}_\mS)_{\vert \tilde{\F}^*_{\hat{X}}}$ equals the operator $e(\overline{Z}_{\hat{\mS}})$ adjusted to the GIFZS $\hat{\mS}=(\hat{X},(\hat{\phi}_j)_{j=1}^L,(\rho_j)_{j=1}^L)$. Hence
Lemma \ref{lem3ff} and Theorem \ref{dfp}  imply the following ``discrete" version of the Hutchinson--Barnsley theorem for fuzzy GIFSs.
\begin{theorem}\label{ttt4}
Let $(X,d)$ be a complete metric space and $\mS=(X,(\phi_j)_{j=1}^L,(\rho_j)_{j=1}^L)$ be a GIFS on $X$ consisting of generalized Banach contractions. Let $\ve>0$, ${\hat{X}}$ be a proper $\ve$-net, $r:X\to {\hat{X}}$ be an $\ve$-projection on ${\hat{X}}$ and  $\mS=(X,(\hat{\phi}_j)_{j=1}^L,(\rho_j)_{j=1}^L)$, where $\hat{\phi}_j:=(r_m\circ \phi_{j})_{\vert {\hat{X}}^m}$ for $j=1,...,L$.\\
Then for any $u\in\F^*_{\hat{X}}$ and $n\in\N$,
$$d_\infty(e(\overline{Z}_{\hat{\mS}}^n(u)),u_\mS)\leq
\frac{5\ve}{1-\alpha_\mS}+\alpha_\mS^nd_\infty(e(u),u_\mS),$$
where $u_\mS$ is the fuzzy attractor of $\mS$.\\
In particular, there is $n_0\in\N$ such that for every $n\geq n_0$, $e(\overline{Z}_{\hat{\mS}}^n(u))$ is an attractor of $\mS$ with the resolution $\frac{6\ve}{1-\alpha_\mS}$.
\end{theorem}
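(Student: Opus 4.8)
The plan is to reduce the whole statement to a single application of Theorem~\ref{dfp}, applied not to the multivariable operator $Z_\mS$ but to its diagonal single-variable companion $\overline{Z}_\mS:\F^*_X\to\F^*_X$. First I would invoke Lemma~\ref{lem3fff}, which says that $\overline{Z}_\mS$ is a genuine (single-variable) Banach contraction with $\on{Lip}(\overline{Z}_\mS)\leq\alpha_\mS$; because $X$ is complete, $(\F^*_X,d_\infty)$ is complete, and the unique fixed point of $\overline{Z}_\mS$ is exactly the fuzzy attractor $u_\mS$, as recorded just after that lemma.

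Next I would assemble the discrete scaffolding on the fuzzy hyperspace supplied by Lemma~\ref{lemma4}: part (c) gives that $\tilde{\F}^*_{\hat{X}}$ is an $\ve$-net of $\F^*_X$, and part (d) gives that $r:\F^*_X\to\tilde{\F}^*_{\hat{X}}$ is an $\ve$-projection. Thus all hypotheses of Theorem~\ref{dfp} are satisfied for $f=\overline{Z}_\mS$, with $x_*=u_\mS$ and Lipschitz constant $\alpha_\mS$. Writing $g:=(r\circ\overline{Z}_\mS)_{\vert\tilde{\F}^*_{\hat{X}}}$ for the $r$-discretization, Theorem~\ref{dfp} directly yields
$$d_\infty(g^n(e(u)),u_\mS)\leq\frac{5\ve}{1-\alpha_\mS}+\alpha_\mS^n\,d_\infty(e(u),u_\mS)$$
for every $u\in\F^*_{\hat{X}}$ and $n\in\N$, since $e(u)\in\tilde{\F}^*_{\hat{X}}$.

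It then remains to identify the iterates $g^n(e(u))$ with $e(\overline{Z}_{\hat{\mS}}^n(u))$. By the paragraph preceding the statement, $g(v)=e(\overline{Z}_{\hat{\mS}}(v_{\vert\hat{X}}))$ for $v\in\tilde{\F}^*_{\hat{X}}$; taking $v=e(u)$ gives $g(e(u))=e(\overline{Z}_{\hat{\mS}}(u))$, and a trivial induction (using $(e(w))_{\vert\hat{X}}=w$) propagates this to $g^n(e(u))=e(\overline{Z}_{\hat{\mS}}^n(u))$, turning the displayed inequality into the assertion. The ``in particular'' clause is inherited verbatim from the corresponding clause of Theorem~\ref{dfp}: once $n$ is large enough that $\alpha_\mS^n\,d_\infty(e(u),u_\mS)\leq\ve/(1-\alpha_\mS)$, the second summand is swallowed into an additional $\ve/(1-\alpha_\mS)$, giving resolution $\tfrac{6\ve}{1-\alpha_\mS}$.

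The one genuinely substantive step — and the main obstacle — is the commutation identity $g(v)=e(\overline{Z}_{\hat{\mS}}(v_{\vert\hat{X}}))$ itself, the GIFZS analogue of Lemma~\ref{lemma4}(e). Establishing it amounts to unwinding Zadeh's extension principle through the Cartesian product $\btimes_i v$ and the preimages under $r_m\circ\phi_j$, mirroring the displayed chain of equalities in the proof of Lemma~\ref{lemma4}(e). The finiteness of $\on{supp}(v)$ is what collapses each $\sup$ to a $\max$, while the admissibility conditions $\rho_j(0)=0$ and the monotonicity of $\rho_j$ are what license dropping and then reinstating the constraint $z\in(r_m\circ\phi_j)^{-1}(x)$. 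I would also record that $\overline{Z}_{\hat{\mS}}$ maps $\F^*_{\hat{X}}$ into itself — guaranteed by continuity of the discretized maps $\hat{\phi}_j$ on the discrete space $\hat{X}^m$ — so that the iteration $\overline{Z}_{\hat{\mS}}^n$ is meaningful.
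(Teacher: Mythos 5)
Your proposal is correct and follows essentially the same route as the paper: the paper also reduces Theorem~\ref{ttt4} to Theorem~\ref{dfp} applied to the diagonal operator $\overline{Z}_\mS$ (a Banach contraction by Lemma~\ref{lem3fff}, with fixed point $u_\mS$), using Lemma~\ref{lemma4}(c),(d) for the $\ve$-net structure on $\F^*_X$ and the GIFZS analogue of Lemma~\ref{lemma4}(e) to identify $(r\circ\overline{Z}_\mS)_{\vert\tilde{\F}^*_{\hat{X}}}$ with $e(\overline{Z}_{\hat{\mS}}(\cdot_{\vert\hat{X}}))$. Your write-up is in fact slightly more careful than the paper's, which merely asserts the commutation identity ``can be proved similarly'' — exactly the step you single out as the substantive one.
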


\section{Discrete version of the deterministic algorithm for GIFSs and fuzzy GIFSs}\label{sec:FuzzyIFSandGIFSDraw}

Theorems \ref{ttt3} and \ref{ttt4} can be used to get a discrete version of the deterministic algorithms. They are very simlar as IFS's ones - the only difference is that we iterate operators $\overline{F}_{\hat{\mS}}$ or $\overline{Z}_{\hat{\mS}}$ instead of $F_{\hat{\mS}}$ and $Z_{\hat{\mS}}$. Moreover, in the case of fuzzy GIFSs, the set $\Phi^{-1}$ is a subset of $\{1,...,L\}\times \hat{X}^m\times \hat{X}$.

\subsection{Discrete deterministic algorithm for GIFS} Let $\mathcal{S}=(X,(\phi_j)_{j=1}^L)$ be a GIFS satisfying the conditions of Theorem~\ref{ttt3} then we can obtain a fractal with resolution $\delta$, approximating $A_{\mS}$.  The algorithm is the very same as \texttt{IFSDraw($\mS$)}.
{\tt
\begin{tabbing}
aaa\=aaa\=aaa\=aaa\=aaa\=aaa\=aaa\=aaa\= \kill
     \> \texttt{GIFSDraw($\mS$)}\\
     \> {\bf input}: \\
     \> \>  \> $\delta>0$, the resolution.\\
     \> \>  \> $K \subseteq \hat{X}$, any finite and not empty subset. \\
     \> \>  \> The diameter $D$ of a ball in $(X,d)$ containing $A_{\mS}$ and $K$.\\
     \> {\bf output}: A bitmap  image representing a fractal with resolution  at most $\delta$.\\
     \> {\bf Compute} $\displaystyle \alpha_\mS:={\rm Lip}(\mS)$\\
     \> {\bf Compute} $\varepsilon >0$ and $N \in \mathbb{N}$ such that $\frac{5\ve}{1-\alpha_\mS}+\alpha_\mS^N \, D < \delta$\\
     \>W:=K \\
     \> {\bf for n from 1 to N do}\\
     \> \> W:=$\overline{F}_{\hat{\mS}}$(W)\\
     \> {\bf end do}\\
     \> {\bf return: Print}(W).\\
\end{tabbing}}

Following the same reasoning as in Remark~\ref{resolution choices} we can obtain, from Theorem~\ref{ttt3}, the appropriate estimates in order to have $\frac{5\ve}{1-\alpha_\mS}+\alpha_\mS^N \, D < \delta$.

\subsection{Examples of discrete GIFS fractals} \label{Examples of discrete GIFS fractals}
\begin{example}\label{discreteGIFSex1}
   This first example is the GIFS $\mathcal{F}$ appearing in Example 8 from \cite{Jaros-2016}. The approximation by the algorithm \texttt{GIFSDraw($\mS$)} is presented in the Figure~\ref{ex5.3}. Consider $(\mathbb{R}^2, d_{e})$ a metric space, $X=[0,2.1]\times[0.1,2.4]$ and the  IFS $\phi_1, \phi_2: X^2 \to X$ where
\[\mS:
\left\{
  \begin{array}{ll}
           \phi_1(x_1,y_1,x_2,y_2)&=(0.1 x_1+0.15 x_2+0.04 y_2, 0.16 y_1-0.04 x_2+0.15 y_2 +1.6)\\
           \phi_2(x_1,y_1,x_2,y_2)&= (0.1 x_1-0.15 y_1-0.1 x_2+0.15 y_2+1.6, 0.15 x_1+0.15 y_1 +0.15 x_2+0.07 ) \\
  \end{array}
\right.
\]
  \begin{table}[ht]
   \centering
   \begin{tabular}{c|c|c|c|c|c|c}
     \hline
     $\alpha$ & $[a,b]\times[c,d]$ & $D$ & $n$ & $N$ & $\ve$ & $\delta$ \\
     \hline
      $0.4209556069$ & $[0,2.1]\times[0.1,2.4]$ & $3.114482300$ & $700$ & $5$ & $2.5\times 10^{-4}$ & $0.04332744866$ \\
     \hline
   \end{tabular}
   \caption{Resolution data for the uniform picture in \ref{ex5.3}.}\label{table:ex5.3_fuzzy_gifs}
 \end{table}
 \begin{figure}[!ht]
  \centering
  \includegraphics[width=3cm,frame]{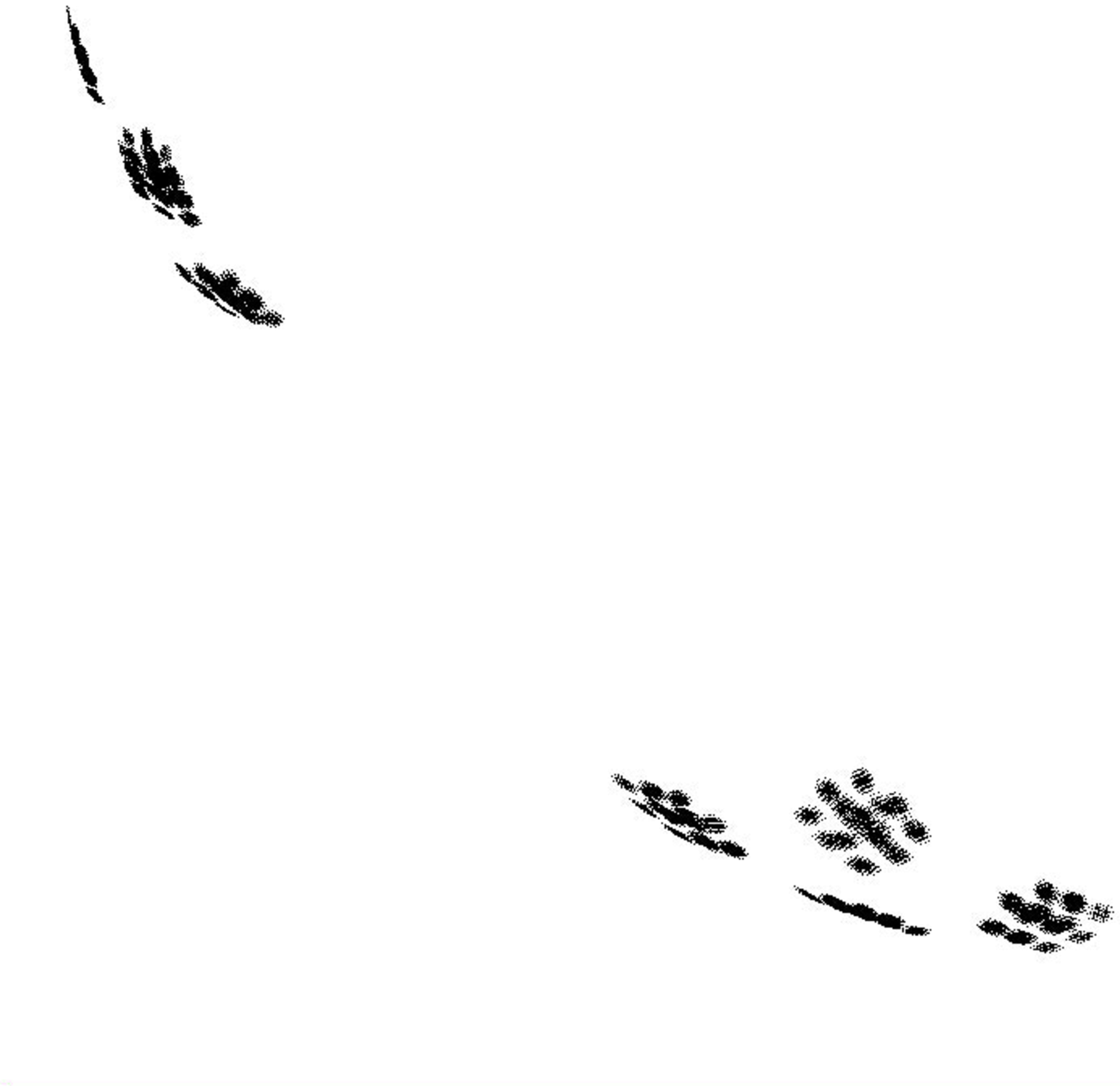}
  \includegraphics[width=3cm,frame]{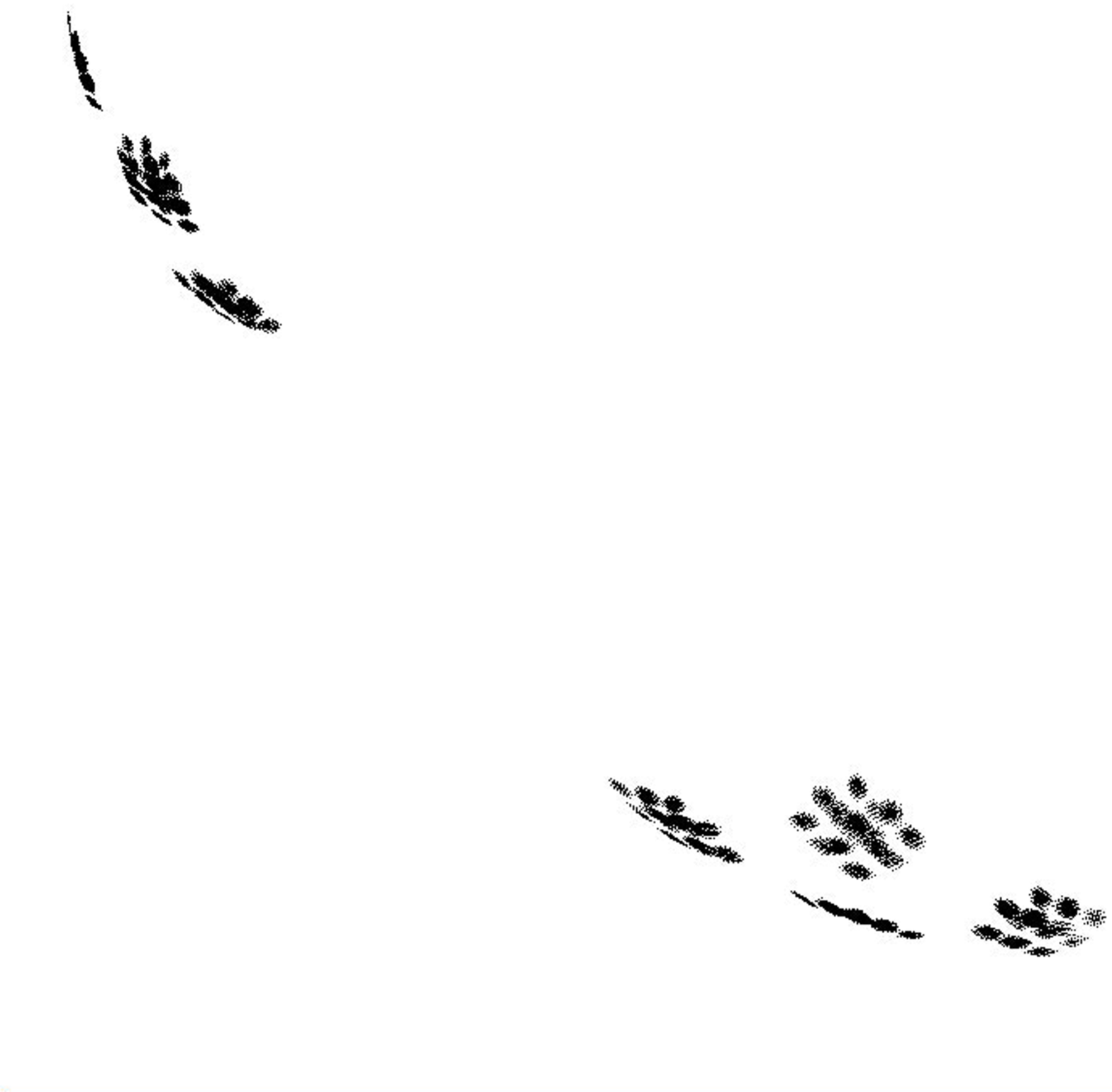}
  \includegraphics[width=3cm, height=3cm, frame]{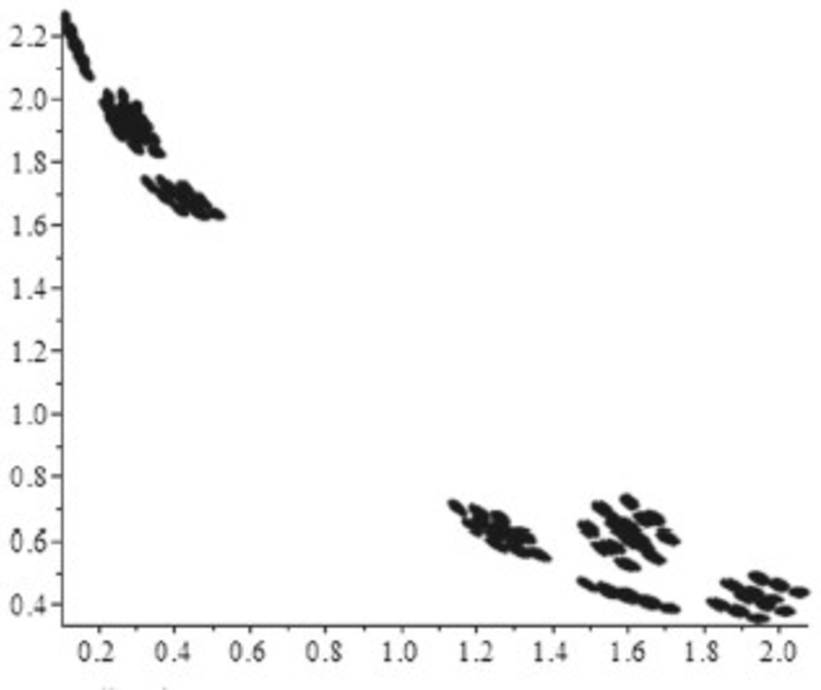}
  \caption{From the left to the right the output of the algorithm \texttt{GIFSDraw($\mS$)} uniform and aleatory after 5 iterations. On the right, the picture performed by the deterministic algorithm for GIFS from \cite{Jaros-2016}.}\label{ex5.3}
\end{figure}

\end{example}

%\newpage

\begin{example}\label{discreteGIFSex2}   Our second example is the GIFS $\mathcal{G}$ appearing in Example 8 from \cite{Jaros-2016}. The approximation by the algorithm \texttt{GIFSDraw($\mS$)} is presented in the Figure~\ref{ex5.4}. Consider $(\mathbb{R}^2, d_{e})$ a metric space, $X=[0.7,0.8]\times[0.65,1.0]$ and the GIFS $\phi_1, \phi_2: X^2 \to X$ where
\[\mS:
\left\{
  \begin{array}{ll}
       \phi_1(x_1,y_1,x_2,y_2)&=\left(0.05 x_{1}+0.02 x_{2}+0.1 y_{2}+0.635 , 0.1 x_{1}+0.2 y_{1}+0.08 x_{2}+0.15 y_{2}+0.5\right) \\
       \phi_2(x_1,y_1,x_2,y_2)&=\left(0.15 x_{1}+0.05 x_{2}+0.1 y_{2}+0.5 , 0.15 x_{1}+0.15 y_{1}+0.45\right) \\
  \end{array}
\right.
\]
  \begin{table}[ht]
   \centering
   \begin{tabular}{c|c|c|c|c|c|c}
     \hline
     $\alpha$ & $[a,b]\times[c,d]$ & $D$ & $n$ & $N$ & $\ve$ & $\delta$ \\
     \hline
      $0.4117654113$ & $[0.7,0.8]\times[0.65,1.0]$ & $0.3640054945$ & $700$ & $5$ & $2.5\times 10^{-4}$ & $0.006433811638$ \\
     \hline
   \end{tabular}
   \caption{Resolution data for the uniform picture in \ref{ex5.4}.}\label{table:ex5.4_fuzzy_gifs}
 \end{table}
\begin{figure}[!ht]
  \centering
  \includegraphics[width=3cm,frame]{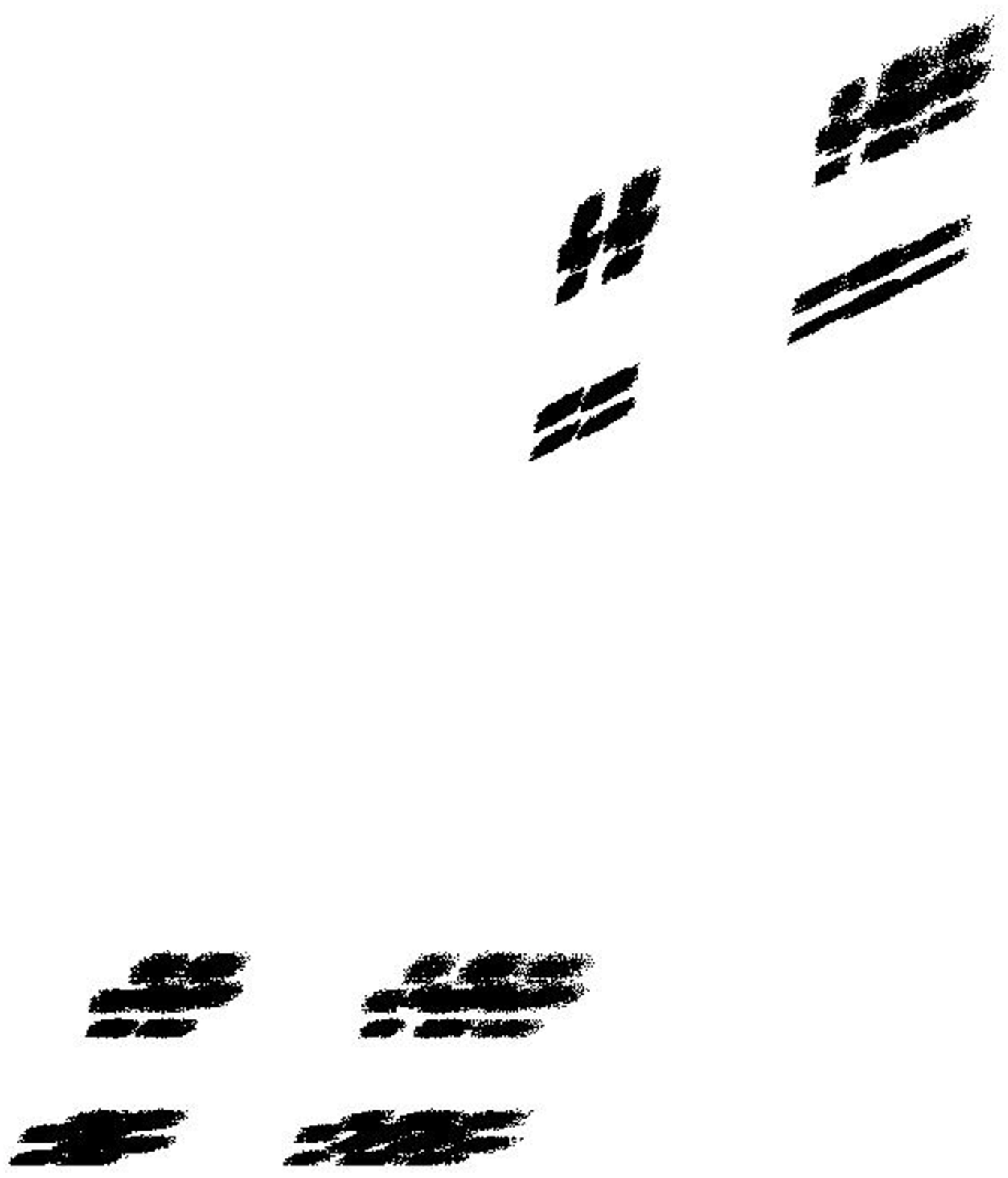}
  \includegraphics[width=3cm,frame]{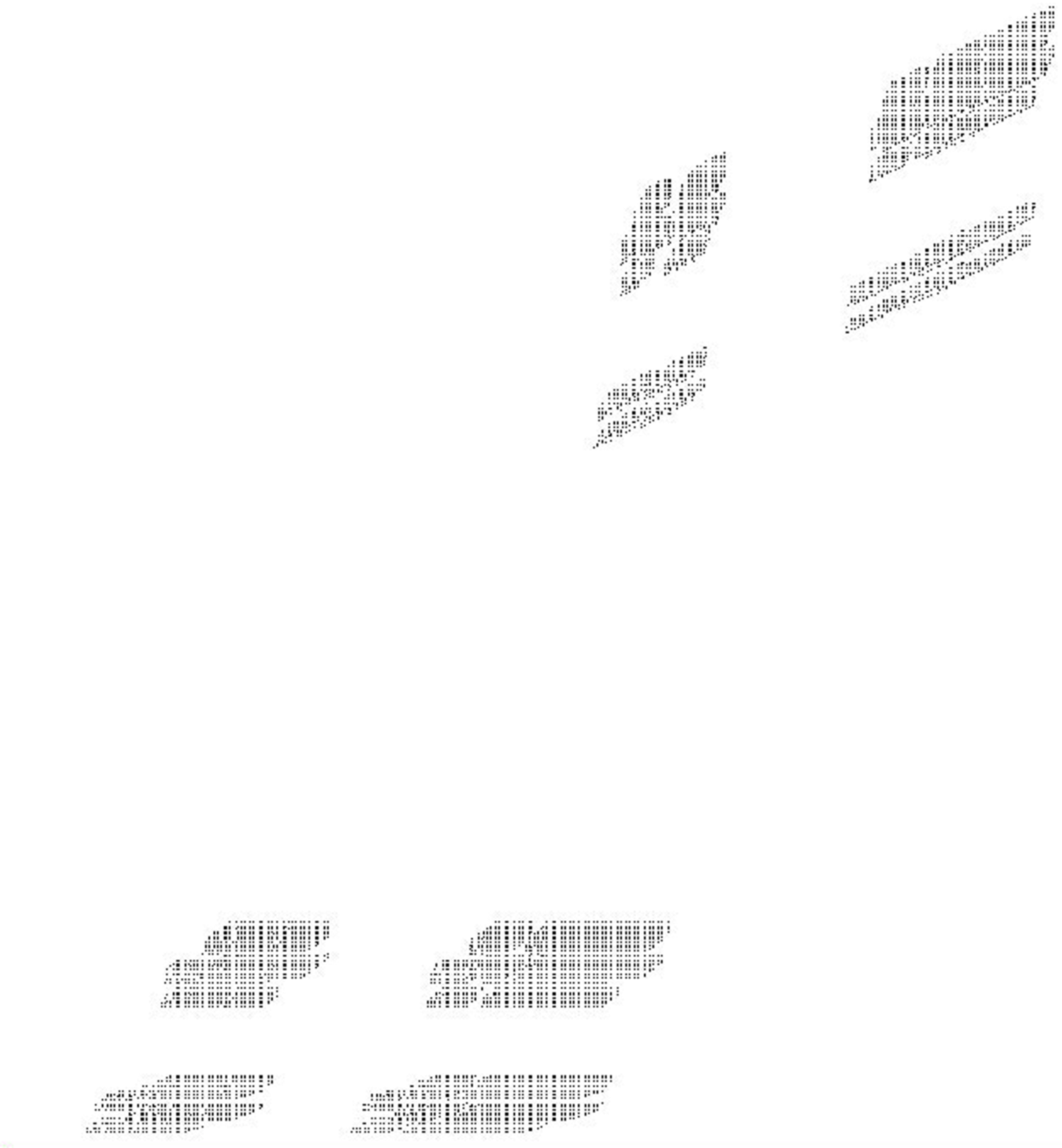}
  \includegraphics[width=3cm, height=3cm, frame]{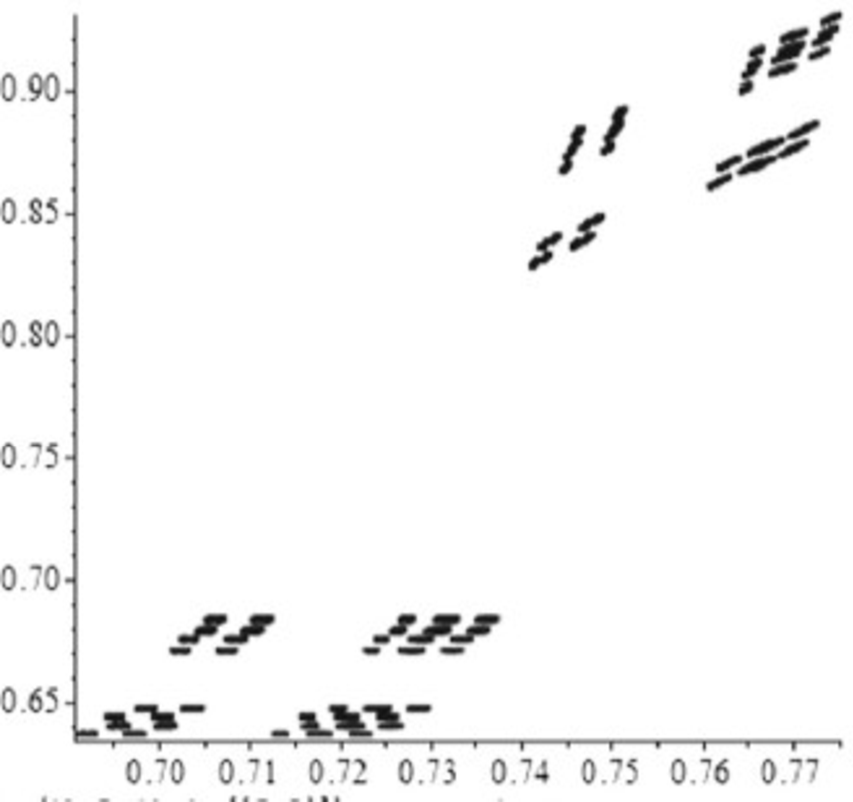}
  \caption{From the left to the right the output of the algorithm \texttt{GIFSDraw($\mS$)} uniform and aleatory after 5 iterations. On the right, the picture performed by the deterministic algorithm for GIFS from \cite{Jaros-2016}. }\label{ex5.4}
\end{figure}

\end{example}

%\newpage

\begin{example}\label{discreteGIFSex3}   Our last example is also based on a normalization of the GIFS $\mathcal{G}$ appearing in Example 16 from \cite{Jaros-2016}. The approximation by the algorithm \texttt{GIFSDraw($\mS$)} is presented in the Figure~\ref{ex5.5}. Consider $(\mathbb{R}^2, d_{e})$ a metric space, $X=[0 , 0.77]^2$ and the GIFS $\phi_1, \phi_2: X^2 \to X$ where
\[\mS:
\left\{
  \begin{array}{ll}
       \phi_1(x_1,y_1,x_2,y_2)&=(0.25 x_1+0.2 y_2, 0.25 y_1+0.2 y_2) \\
    \phi_2(x_1,y_1,x_2,y_2)&=(0.25 x_1+0.2 x_2, 0.25 y_1+0.1 y_2+0.5) \\
    \phi_3(x_1,y_1,x_2,y_2)&=(0.25 x_1+0.1 x_2+0.5, 0.25 y_1+0.2 y_2)) \\
  \end{array}
\right.
\]
  \begin{table}[ht]
   \centering
   \begin{tabular}{c|c|c|c|c|c|c}
     \hline
     $\alpha$ & $[a,b]\times[c,d]$ & $D$ & $n$ & $N$ & $\ve$ & $\delta$ \\
     \hline
      $0.5325006068$ & $[0,0.77]\times[0,0.77]$ & $1.088944443$ & $700$ & $5$ & $2.5\times 10^{-4}$ & $0.04929744899$ \\
     \hline
   \end{tabular}
   \caption{Resolution data for the uniform picture in \ref{ex5.5}.}\label{table:ex5.5_fuzzy_gifs}
 \end{table}
 \begin{figure}[!ht]
  \centering
  \includegraphics[width=3cm, frame]{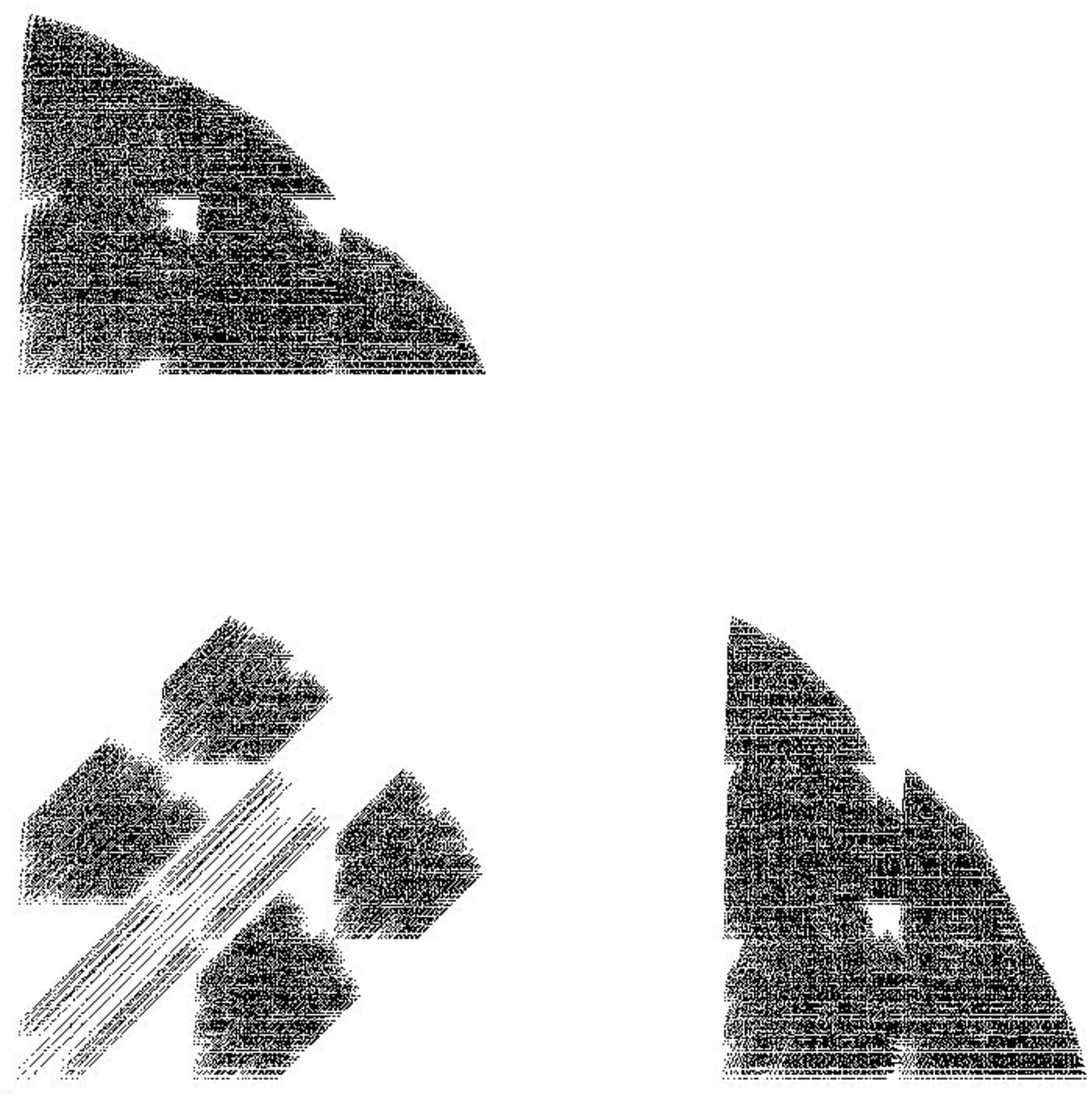}
  \includegraphics[width=3cm, frame]{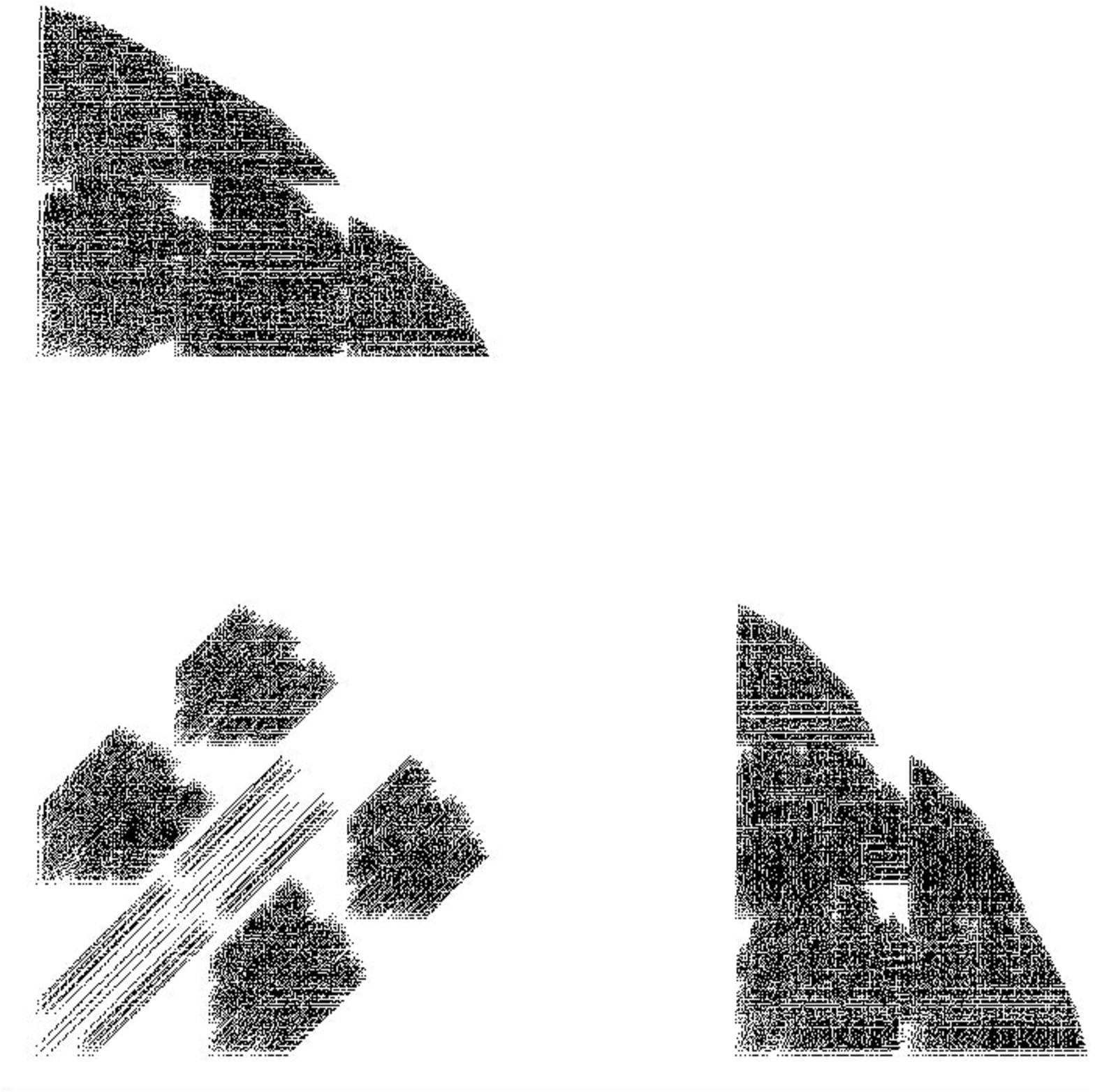}
 \includegraphics[width=3cm, height=3cm, frame]{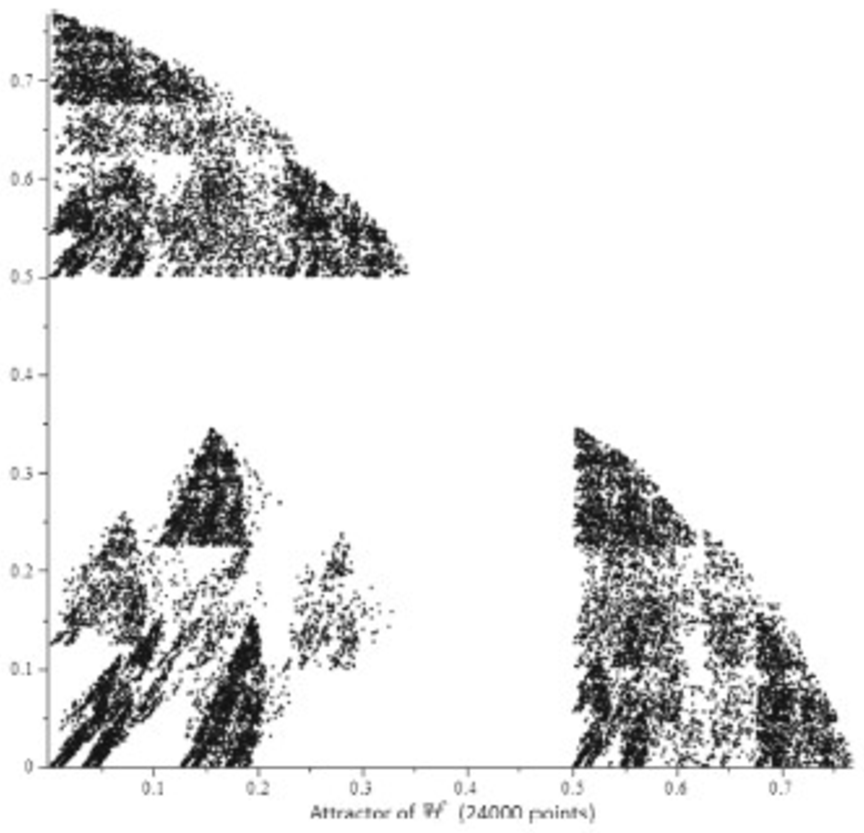}
  \caption{From the left to the right the output of the algorithm \texttt{GIFSDraw($\mS$)} uniform and aleatory after 5 iterations. On the right, the picture performed by the chaos game algorithm for GIFS from \cite{Jaros-2016}.}\label{ex5.5}
  \end{figure}

\end{example}

%\newpage

\subsection{Discrete deterministic algorithm for fuzzy GIFS} Let $\mathcal{S}=(X,(\phi_j)_{j=1}^L,(\rho_j)_{j=1}^L)$ be a fuzzy GIFS satisfying the conditions of Theorem~\ref{ttt4}. Then we can obtain a fractal with resolution $\delta$, approximating $u_{\mS}$, as follows:

{\tt
\begin{tabbing}\label{Determ Discrete fuzzy GIFS algorithm}
aaa\=aaa\=aaa\=aaa\=aaa\=aaa\=aaa\=aaa\= \kill
     \> \texttt{FuzzyGIFSDraw($\mS$)}\\
     \> {\bf input}: \\
     \> \>  \> $\delta> 0$, the resolution.\\
     \> \>  \> $u \in{\F}^*_{\hat{X}}$ a discrete fuzzy set. \\
     \> \>  \> The diameter $D$ of a ball in $({\F}^*_{X},d_{\infty})$ containing $\on{supp}(u_{\mS})\mbox{ and }\on{supp}(u)$.\\
     \> {\bf output}: A grey scale  image representing a fractal
     with resolution  at most $\delta$.\\
     \> {\bf Compute} $\displaystyle \alpha_\mS:={\rm Lip}(\mS):=\max_{1\leq j\leq L}{\rm Lip}(\hat{\phi}_{j}) $;\\
     \> {\bf Compute} $\varepsilon >0$ and $N \in \mathbb{N}$ such that $\frac{5\ve}{1-\alpha_\mS}+\alpha_\mS^N \, D < \delta$\\
     \>w:=u \\
     \> {\bf for n from 1 to N do}\\
     \> \> w:=$\overline{Z}_{\hat{\mS}}$(w) \\
     \> {\bf end do}\\
     \> {\bf return: Print}(w).
\end{tabbing}}

Following the same reasoning as in the  Remark~\ref{resolution choices} we can obtain, from Theorem~\ref{ttt4}, the appropriate estimates in order to have $\frac{5\ve}{1-\alpha_\mS}+\alpha_\mS^N \, D < \delta$. However the actual computation of $\overline{Z}_{\hat{\mS}}(u)$ requires some additional technical details.

\subsection{Fuzzy GIFS examples}
The algorithm \texttt{GIFZSDraw} will take unnecessary computational work when it is used for simpler cases compared with the respective algorithms. Therefore in this section we are going to consider exclusively ``genuine'' GIFS.

%\newpage
\subsubsection{Examples with dimension two}

%%%%%%%%%%%%%%%%%%%%%%%exemplo4%%%%%%%%%%%%%%%%%%%%%%%%%%%%%%%%%
The  next example is the GIFS $\mathcal{G}$ appearing in Example 16 from \cite{Jaros-2016}.

\begin{example}\label{example 4}
  Consider $(\mathbb{R}^2, d_{e})$ a metric space, $X=[0, 0.77]^2$ and a fuzzy GIFS $\phi_1,..., \phi_3: X^2 \to X$, where
\[\mS :\left\{
  \begin{array}{ll}
    \phi_1(x_1,y_1,x_2,y_2)&=(0.25 x_1+0.2 y_2, 0.25 y_1+0.2 y_2) \\
    \phi_2(x_1,y_1,x_2,y_2)&=(0.25 x_1+0.2 x_2, 0.25 y_1+0.1 y_2+0.5) \\
    \phi_3(x_1,y_1,x_2,y_2)&=(0.25 x_1+0.1 x_2+0.5, 0.25 y_1+0.2 y_2)
  \end{array}
\right.
\]
with the grey level functions
\[\rho_1(t):=
\left\{
  \begin{array}{ll}
      0    & 0\leq t < 0.2505  \\
      0.25  & 0.2505 \leq t < 0.505  \\
      0.5   & 0.505\leq t < 0.7505  \\
      0.75 & 0.7505 \leq  t \leq 1
  \end{array}
\right.,
\] $\rho_2(t):=t$, $\rho_3(t):= t$ and $\rho_4(t):=t$.

  \begin{table}[ht]
   \centering
   \begin{tabular}{c|c|c|c|c|c|c}
     \hline
     $\alpha$ & $[a,b]\times[c,d]$ & $D$ & $n$ & $N$ & $\ve$ & $\delta$ \\
     \hline
      $0.5325006068$ & $[0,0.77]\times[0,0.77]$ & $1.088944443$ & $700$ & $5$ & $2.5\times 10^{-4}$ & $0.04929744899$ \\
     \hline
   \end{tabular}
   \caption{Resolution data for the uniform picture in \ref{example4_determ}.}\label{table:example4_determ_fuzzy_gifs}
 \end{table}

The approximations of the fuzzy attractor, via uniform and aleatory algorithm, are given in the pictures, Figure~\ref{example4_determ} and Figure~\ref{example4_chaos}, respectively.

  \begin{figure}[!ht]
  \centering
  \includegraphics[width=3cm]{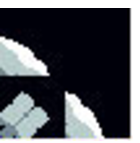}
  \includegraphics[width=3cm]{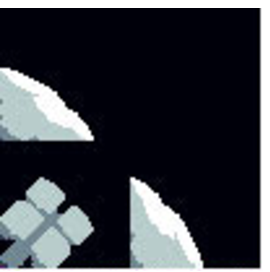}
  \includegraphics[width=3cm]{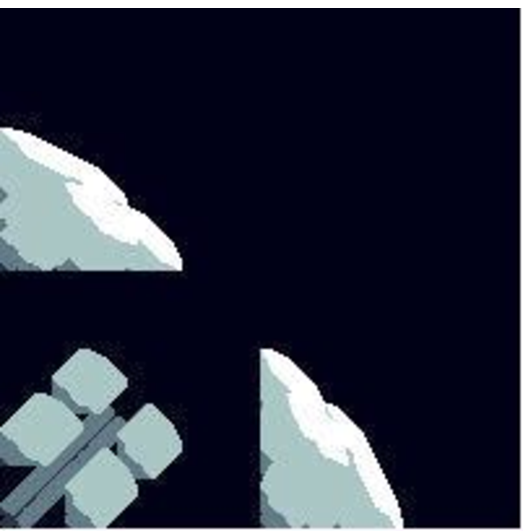}
  \includegraphics[width=3.2cm,height=3cm, frame]{exemplo4-2d_orig.eps}\\
  \caption{From the left to the right, the figure obtained by the uniform algorithm with $n=50$, $100$, $200$ and the picture drawn by \cite{Jaros-2016}.}\label{example4_determ}
  \end{figure}

  \begin{figure}[!ht]
  \centering
  \includegraphics[width=3cm]{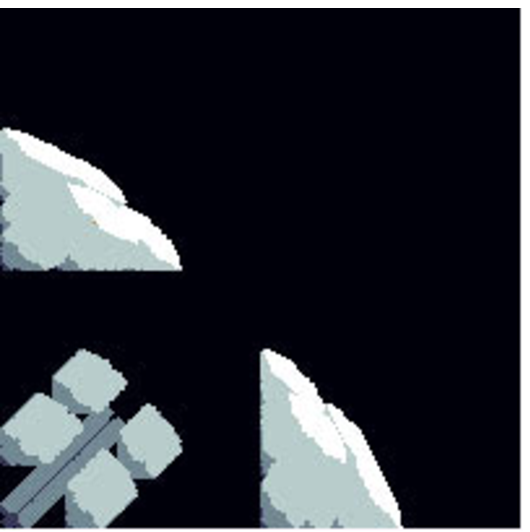} \quad
  \includegraphics[width=3cm]{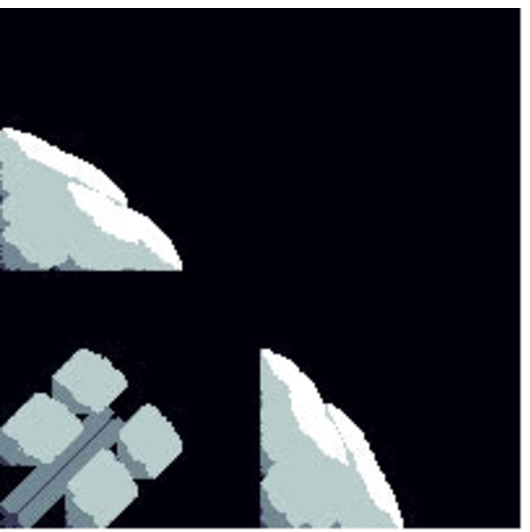}\\
  \caption{From the left to the right, the figure obtained by the chaos-game algorithm with $n=200$ and $na=40000000$, $n=200$ and $na=80000000$, aleatory sequences. Performance data for $na=40000000$: number of iterations, $9$; time for $\Phi^{-1}$ set construction, $305.68$ seconds; time for iterations, $1486.73$ seconds; overall running time, $1792.43$ seconds. Performance data for $na=80000000$: number of iterations, $3$; time for $\Phi^{-1}$ set construction, $601.00$ seconds; time for iterations, $935.89$ seconds; overall running time, $1536.92$ seconds.}\label{example4_chaos}
  \end{figure}

\end{example}

%\newpage

%%%%%%%%%%%%%%%%%%%%%%%exemplo6new%%%%%%%%%%%%%%%%%%%%%%%%%%%%%%%%%
\begin{example}\label{example 6new}
Consider $(\mathbb{R}^2, d_{e})$ a metric space, $X=[0, 0.77]^2$ and a fuzzy GIFS $\phi_1,..., \phi_3: X^2 \to X$, where
\[\mS :\left\{
  \begin{array}{ll}
    \phi_1(x_1,y_1,x_2,y_2)&=(0.35 \cdot x_1+0.3 \cdot y_2+0.35,0.35 \cdot y_1+0.3 \cdot y_2+0.35) \\
    \phi_2(x_1,y_1,x_2,y_2)&=(0.35 \cdot x_1+0.3 \cdot x_2,0.35 \cdot y_1+0.2 \cdot y_2+0.3) \\
    \phi_3(x_1,y_1,x_2,y_2)&=(0.35 \cdot x_1+0.2 \cdot x_2+0.3,0.35 \cdot y_1+0.3 \cdot y_2)\\
  \end{array}
\right.
\]
with the grey scale functions
$\rho_1(t):=t$\\
\[\rho_2(t):=
\left\{
  \begin{array}{ll}
      0    & 0\leq t < 0.2505  \\
      0.25  & 0.2505 \leq t < 0.505  \\
      0.5   & 0.505\leq t < 0.7505  \\
      0.75 & 0.7505 \leq  t \leq 1
  \end{array}
\right.,
\] and $\rho_3(t):=t$.\\
    \begin{table}[ht]
   \centering
   \begin{tabular}{c|c|c|c|c|c|c}
     \hline
     $\alpha$ & $[a,b]\times[c,d]$ & $D$ & $n$ & $N$ & $\ve$ & $\delta$ \\
     \hline
      $0.7737553383$ & $[0,0.77]\times[0,0.77]$ & $1.088944443$ & $700$ & $5$ & $2.5\times 10^{-4}$ & $0.3075368638$ \\
     \hline
   \end{tabular}
   \caption{Resolution data for the uniform picture in \ref{example6new}.}\label{table:example6new}
 \end{table}

The approximations of the fuzzy attractor, via uniform and aleatory algorithm, are given in the Figure~\ref{example6new}.

  \begin{figure}[!ht]
  \centering
  \includegraphics[width=3cm,frame]{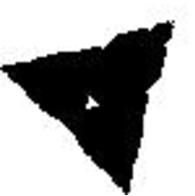}
  \includegraphics[width=3cm]{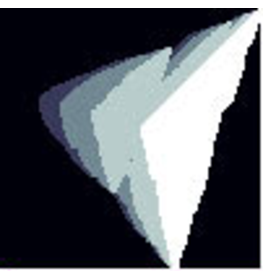}
  \includegraphics[width=3cm]{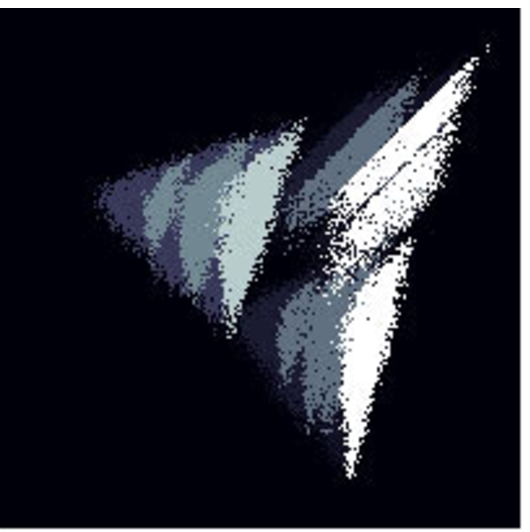}
  \caption{From the left to the right the output of the algorithm \texttt{GIFSDraw($\mS$)} and the output of the algorithm \texttt{FuzzyGIFSDraw($\mS$)} after 5 iterations on both uniform and aleatory versions.}\label{example6new}
  \end{figure}
\end{example}

%\newpage

%%%%%%%%%%%%%%%%%%%%%%%exemplo7new%%%%%%%%%%%%%%%%%%%%%%%%%%%%%%%%%
\begin{example}\label{example 7new}
  Consider $(\mathbb{R}^2, d_{e})$ a metric space, $X=[-0.1,2.1]\times[-0.1,2.1]^2$ and a fuzzy GIFS $\phi_1,..., \phi_3: X^2 \to X$, where
\[\mS :\left\{
  \begin{array}{ll}
    \phi_1(x_1,y_1,x_2,y_2)&=(0.2 x_1+0.25 x_2+0.04 y_ 2,0.16 y_1-0.14 x_2+0.20 y_2+1.3) \\
    \phi_2(x_1,y_1,x_2,y_2)&=(0.2 x_1-0.15 y_1-0.21 x_2+0.15 y_2+1.3,0.25 x_1+0.15 y_1+0.25 x_2+0.17) \\
    \phi_3(x_1,y_1,x_2,y_2)&=(0.355  x_1+0.355  y_ 1+0.378,-0.355 x_1+0.355  y_1+0.434-0.03  y_2)\\
  \end{array}
\right.
\]
with the grey scale functions
\[\rho_1(t):=
\left\{
  \begin{array}{ll}
      0    & 0\leq t < 0.2505  \\
      0.25  & 0.2505 \leq t < 0.505  \\
      0.5   & 0.505\leq t < 0.7505  \\
      0.75 & 0.7505 \leq  t \leq 1
  \end{array}
\right.,
\] $\rho_2(t):= t$ and $\rho_3(t):=t$.\\

  \begin{table}[ht]
   \centering
   \begin{tabular}{c|c|c|c|c|c|c}
     \hline
     $\alpha$ & $[a,b]\times[c,d]$ & $D$ & $n$ & $N$ & $\ve$ & $\delta$ \\
     \hline
      $0.5654836644$ & $[-0.1,2.1]\times[-0.1,2.1]$ & $3.111269837$ & $700$ & $5$ & $2.5\times 10^{-4}$ & $0.1827795924$ \\
     \hline
   \end{tabular}
   \caption{Resolution data for the uniform picture in \ref{example7new}.}\label{table:example7new}
 \end{table}

The approximations of the fuzzy attractor, via uniform and aleatory algorithm, are given in the Figure~\ref{example7new}.

  \begin{figure}[!ht]
  \centering
  \includegraphics[width=3cm,frame]{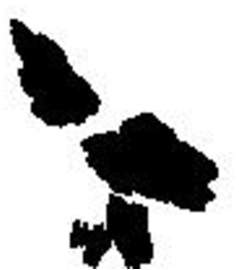}
  \includegraphics[width=3cm]{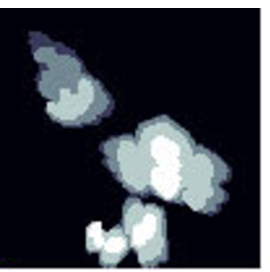}
  \includegraphics[width=3cm]{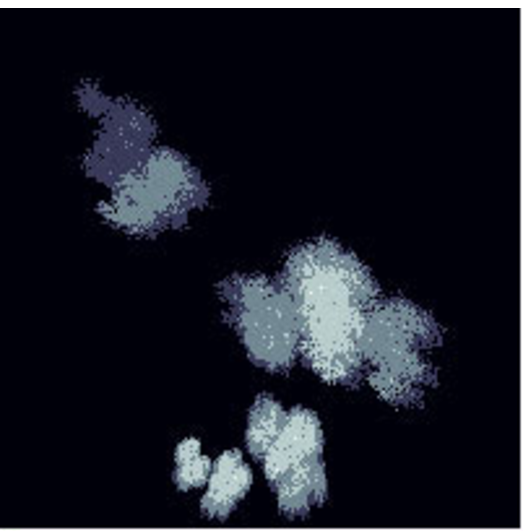}
  \caption{From the left to the right the output of the algorithm \texttt{GIFSDraw($\mS$)} and the output of the algorithm \texttt{FuzzyGIFSDraw($\mS$)} after 5 iterations on both uniform and aleatory versions.}\label{example7new}
  \end{figure}
\end{example}

%\newpage
\section{Final remarks}

\subsection{Numerical analysis and complexity}
  In this section we are going to explain some detail about the performance of the algorithm and its computational limitations.

  The algorithms described in Sections~\ref{sec:IFSDraw} \ref{sec:FuzzyIFSDraw} and \ref{sec:FuzzyIFSandGIFSDraw} were implemented in {\sc Fortran 2003} and were compiled using the {\sc GNU gfortran} compiler, version 5.4.0 with optimization {\tt -O3} turned on. The executable codes were run on a computer equipped with an {\sc Intel(R) Core(TM)} i5-6400T 2.21 GHz processor and 6 GBytes of RAM under the Cygwin environment for Windows 10, 64-bit operating system.

To provide an easier use of our programs, the description of each example is stored in a configuration file which is read and processed prior to the computations proper. In particular, the description of each $\phi$ and $\rho$ functions is written as an arithmetic expression using a syntax similar to that used in {\sc Fortran} source codes and these expressions are then processed and evaluated as needed during the computations.

Due to the fact that the set $\Phi^{-1}$ may require a substantial amount of memory to be stored, even for a small value of $n$, we have implemented two versions for using the deterministic and chaos-game approaches: one that stores the set $\Phi^{-1}$ in the computer's RAM as a linked list of nodes each containing a six-tuple (i.e. $i$, $j$, $i_1$, $j_1$, $i_2$ and $j_2$ representing each point in $\Phi^{-1}$) and another that stores these six-tuples on an external file on disk. These files are repeatedly read during the iteration to compute $F_{\mS}$.

The metric $d_\infty$ is used to decide when to stop the iterations; more specifically, a tolerance $\epsilon$ is specified in the configuration file and the iterations proceed until $d_\infty<\epsilon$. The experiments carried with our implementations also showed that in some cases $d_\infty$ are not reduced further from some iteration onwards. Therefore the iterations will also stop whenever $d_\infty$ remains the same for three consecutive iterations.

\end{document}